\ttfamily\color{Gray},
\sffamily\color{OliveGreen},
\lstdefinelanguage{julia}
{
  keywordsprefix=\@,
  morekeywords={
    exit,whos,edit,load,is,isa,isequal,typeof,tuple,ntuple,uid,hash,finalizer,convert,promote,
    subtype,typemin,typemax,realmin,realmax,sizeof,eps,promote_type,method_exists,applicable,
    invoke,dlopen,dlsym,system,error,throw,assert,new,Inf,Nan,pi,im,begin,while,for,in,return,
    break,continue,macro,quote,let,if,elseif,else,try,catch,end,bitstype,ccall,do,using,module,
    import,export,importall,baremodule,immutable,local,global,const,Bool,Int,Int8,Int16,Int32,
    Int64,Uint,Uint8,Uint16,Uint32,Uint64,Float32,Float64,Complex64,Complex128,Any,Nothing,None,
    function,type,typealias,abstract
  },
  sensitive=true,
  morecomment=[l]{\#},
  morestring=[b]',
  morestring=[b]" 
}
\theoremstyle{plain}
\newtheorem{thm}{\protect\theoremname}
\theoremstyle{remark}
\theoremstyle{plain}
\newtheorem{lem}{\protect\lemmaname}
\newtheorem{defn}{\protect\definitionname}
\newtheorem{prop}{\protect\Propname}
\newtheorem{remark}{Remark}
\providecommand{\claimname}{Claim}
\providecommand{\theoremname}{Theorem}
\providecommand{\lemmaname}{Lemma}
\providecommand{\Propname}{Proposition}
\providecommand{\corollaryname}{Corollary}
\providecommand{\definitionname}{Definition}
\newcommand{\srf}{\textrm{SRF}} 
\newcommand{\W}{\Omega}
\renewcommand{\vec}[1]{\boldsymbol{\mathrm{#1}}}
\newcommand{\xv}{\vec{x}}
\newcommand{\xvec}{\vec{x}}
\newcommand{\av}{\vec{\alpha}}
\newcommand{\mm}{\Lambda} % minimax
\newcommand{\discr}{\mathcal{V}}
\newcommand{\qs}{\mathfrak{q}}
\DeclareMathOperator{\diag}{diag}
\begin{document}
\title{On the accuracy of Prony's method for recovery of exponential sums with closely spaced exponents}
\author{Rami Katz}
\address{School of Electrical Engineering\\Tel Aviv University\\Tel Aviv, Israel\\
\;\\
 Department of Industrial Engineering\\University of Trento, Italy}
\author{Nuha Diab \and Dmitry Batenkov}
\address{Department of Applied Mathematics\\School of Mathematical Sciences\\Tel Aviv University\\Tel Aviv, Israel}
\date{\today}
\thanks{DB and ND are supported by Israel Science Foundation Grant 1793/20, and by a collaborative grant from the Volkswagen Foundation.}

\email{ramkatsee@gmail.com, nuhadiab@tauex.tau.ac.il, dbatenkov@tauex.tau.ac.il}

\begin{abstract}
    In this paper we establish accuracy bounds of Prony's method (PM) for recovery of sparse measures from incomplete and noisy frequency measurements, or the so-called problem of super-resolution, when the minimal separation between the points in the support of the measure may be much smaller than the Rayleigh limit. In particular, we show that PM is optimal with respect to the previously established min-max bound for the problem, in the setting when the measurement bandwidth is constant, with the minimal separation going to zero. Our main technical contribution is an accurate analysis of the inter-relations between the different errors in each step of PM, resulting in previously unnoticed cancellations. We also prove that PM is numerically stable in finite-precision arithmetic. We believe our analysis will pave the way to providing accurate analysis of known algorithms for the super-resolution problem in full generality.
\end{abstract}

\keywords{Prony's method; super-resolution; sparse spike deconvolution; exponential analysis}

\maketitle

\section{Introduction}
%The Prony Method has been applied to a wide range of different inverse problems, such as approximation of Green functions in quantum chemistry [55] or fluid dynamics [6], localization of particles in inverse scattering [27], parameter estimation of dispersion curves of guided waves [50], analysis of ultrasonic signals [7] and more.

Consider the problem of finding the parameters $\{\alpha_j,x_j\}_{j=1}^n$ of the exponential sum 
\begin{equation}\label{eq:exp-sum}
    f(s) = \sum_{j=1}^n \alpha_j x_j^s
\end{equation}
from the noisy samples $\{f(s_k)+\epsilon_k\}_{k=0}^N$. This exponential fitting problem appears in a wide range of different settings, such as direction of arrival estimation, parametric spectrum estimation, finite rate of innovation sampling, phase retrieval, as well as Pad\'{e} approximation, Gaussian quadrature, and moment problems, to name a few (see e.g. \cite{auton1981, batenkov2013b,lyubich2004, pereyra2010,stoica1995} and the references therein).

An instance of \eqref{eq:exp-sum} of particular interest occurs when $|x_j|=1$ for each $j=1,\dots,n$. This setting is motivated by the problem of super-resolution (SR) of sparse measures of the form $\mu(t) = \sum_{j=1}^n \alpha_j \delta(t-t_j)$ from the samples of its Fourier transform
\begin{equation}\label{eq:ft-of-measure}
f(s)=f_{\mu}(s)=\int e^{2\pi\imath t s}d\mu(t) = \sum_{j=1}^n \alpha_j e^{2\pi\imath s t_j},
\end{equation}
known approximately in some bandwidth $s\in[-\W,\W]$ \cite{donoho1992a}. An important question in applied harmonic analysis is to develop robust reconstruction procedures to solve the SR problem with best possible accuracy, a question which we consider to  still be open even in one spatial dimension. In this paper we consider only the model \eqref{eq:ft-of-measure}; however, some of our results may be extended to the more general model \eqref{eq:exp-sum} (i.e. for arbitrary $x_j\in\mathbb{C}\setminus\{0\}$). The min-max stability of solving \eqref{eq:ft-of-measure} has recently been established in \cite{batenkov2021b} when two or more nodes $x_j$ nearly collide, such that the minimal separation $\delta$ is much smaller than $1/\Omega$, see \prettyref{thm:minmax} below. However, we are not aware of a tractable algorithm provably attaining these bounds.

The Prony's method (PM) \cite{prony1795} is an explicit algebraic procedure (see \prettyref{alg:classical-prony} below) which provides an exact answer to the exponential fitting problem (for arbitrary $x_j\in\mathbb{C}$) under the assumption of exact data (i.e., in the noiseless regime $\epsilon_k\equiv0$), requiring access to only $2n$ consecutive samples $f(0),f(1),\dots,f(2n-1)$. The main insight by de Prony was that the linear parameters $\{\alpha_j\}_{j=1}^n$ can be eliminated from the equations, reducing the problem of recovering $\{x_j\}_{j=1}^n$ to finding roots of a certain algebraic polynomial (the Prony polynomial). The coefficients $\alpha_j$ are recovered in the second step by solving a Vandermonde-type linear system. In the presence of noise, PM is considered to be suboptimal -- however, to the best of our knowledge, no rigorous analysis of its stability has been available, in particular in the context of the super-resolution problem.

\subsection{Main contributions}
Our main result in this paper is a rigorous proof that \emph{Prony's method is optimal} for the SR problem when the measurement bandwidth $\Omega$ is constant while the minimal node separation satisfies $\delta\to 0$. By analyzing each step of PM and taking care of error propagation, we show that the error amplification factors for both the nodes and amplitudes are asymptotically equivalent to the min-max bounds (i.e., the best achievable reconstruction errors under the worst-case perturbation scenario) of  \prettyref{thm:minmax} under the optimal noise scaling (a.k.a. the threshold SNR). These results are given by \prettyref{thm:node-accuracy-Dima} and \prettyref{thm:coeffs-accuracy-Dima} respectively. In effect, our results provide a generalization of \prettyref{thm:minmax} to the true multi-cluster setting (but still restricted to $\Omega=\operatorname{const}$). As a direct corollary, we also show that PM is numerically stable in finite-precision arithmetic (\prettyref{sec:finite-precision}).

Since PM is a multi-step procedure, in principle the errors from each step might have an adverse effect on the next step (such a phenomenon may occur, for instance, when solving linear systems by the $LU$ decomposition method, where the condition number of the original matrix may be unnecessarily amplified, \cite{higham1996}). Our main technical contribution is an accurate analysis of the inter-relations between the different errors in each step of PM, eventually resulting in previously unnoticed cancellations. For comparison, a ``naive'' estimation by textbook numerical analysis methods provides too pessimistic bounds, as we demonstrate in \prettyref{prop:prony-naive-analysis} and \prettyref{prop:ampl-naive}.

\begin{remark}\label{rem:amplitudes-remark}
    The error inter-relations become especially prominent in the multi-cluster setting, as for example it turns out that if the approximate nodes $\{\tilde{x}_j\}_{j=1}^n$ recovered from the first step of PM are further perturbed in an arbitrary direction, for instance by projecting them back to the unit circle, then the resulting errors in the amplitudes $\alpha_j$ may no longer be optimal (contrary to the single cluster example in \prettyref{prop:ampl-naive}). Cf. \prettyref{sec:numerics}.
\end{remark}

\subsection{Towards optimal SR}\label{sub:dpm-intro}

The full SR problem (in particular when $\Omega\delta$ is small but fixed) is apparently still algorithmically open. We believe our results may have implications for analyzing the high-resolution SR algorithms such as ESPRIT/Matrix Pencil/MUSIC, towards establishing their (non-)optimality. Furthermore, the error analysis for different steps may have implications for implementing these methods, cf. \prettyref{sec:numerics} and also \prettyref{rem:amplitudes-remark}.

Recently we have developed the Decimated Prony's Method (DPM) \cite{decimatedProny} (cf. \prettyref{sec:numerics}), which reduces the full SR problem  to a sequence of small problems indexed by a ``decimation parameter'' $\lambda$, followed by further filtering of the results. In more detail, for each admissible $\lambda$, the spectrum $f$ is sampled at $2n$ equispaced frequencies $\{\lambda k\}_{k=0}^{2n-1}$ and the resulting system is subsequently solved by applying PM. Min-max bounds are attained if one can choose $\lambda=O(\Omega)$. This ``decimation'' approach was first proposed in \cite{batenkov2013a}, and further developed in a number of publications on SR \cite{batenkov2017c, batenkov2018,batenkov2020,batenkov2022}, as well as in the resolution of the Gibbs phenomenon \cite{batenkov2015b}. Decimation was the key idea in \cite{batenkov2021b} for establishing the upper bound on the min-max error $\Lambda$ in \prettyref{thm:minmax} as well. \emph{As a consequence of the results of the present paper {\color{blue} we have rigorously shown in \cite{decimatedProny} that DPM attains the upper bounds on $\Lambda$ in the special case of $\ell=n$, under a mild assumption of genericity (see also  \prettyref{sec:numerics}). We conjecture that DPM in fact attains the upper bounds on $\Lambda$ in the general case.} We leave the rigorous proof of this conjecture to a future work.}

% While it requires further proof, our analysis strongly indicates that the DPM achieves the min-max bounds of \prettyref{thm:minmax}. {\color{blue} In particular, by Prony's stability analysis, we prove that DPM achieves the min-max bounds of \prettyref{thm:minmax}, assuming we know how to choose the optimal decimation parameter and perform de-aliasing (steps 1 and 3 in \cite{decimatedProny}) - see \prettyref{sec:numerics}}
% %Following our analysis, we also prove that the DPM achieves the min-max bounds of \prettyref{thm:minmax}. 
% Taking into account the case where $\Omega$ is not a constant (can go to infinity), we believe that we can generalize our result on prony's method to the DPM for model \ref{eq:ft-of-measure} (see section \ref{sec:numerics}).

\subsection{Organization of the paper}
 In \prettyref{sec:sr-prony-details} we describe the min-max bounds for SR, and present PM with an initial sub-optimal stability analysis in this context. The main results are formulated in \prettyref{sec:main-results}, and subsequently proved in Sections~\ref{Sec:Prelims},\ref{Sec:PfMainthm} and \ref{Sec:PfAmplitude}, with the more technical proofs delegated to the appendices. \prettyref{sec:finite-precision} is devoted to analyzing the performance of PM in finite-precision arithmetic, while \prettyref{sec:numerics} demonstrates the different theoretical results numerically.  

\subsection{Notation} We utilize the following common notations. For $\zeta\in \mathbb{N}$, $[\zeta]$ denotes the set $\{1,2,\dots,\zeta\}$. Asymptotic inequality $A\lessapprox B,A\gtrapprox B$  ($A\asymp B$) means inequality (resp. equality) up to constants. If not specified otherwise, the constants are assumed to be independent of the minimal separation $\delta$ and the perturbation size $\epsilon$. We will use the notation {\color{blue} $\operatorname{col}\left\{ y_i\right\}_{i=0}^N := \begin{bmatrix}
y_0&\dots & y_N
\end{bmatrix}^T$}, where $\left\{y_i \right\}_{i=0}^N\subseteq \mathbb{C}$  are arbitrary scalars. {\color{blue} $B(z,r)$ denotes the standard ball $\{z'\in\mathbb{C}: |z-z'|\leq r\}$.}

{\color{blue}
\subsection{Acknowledgements} We thank the two anonymous referees for their extremely valuable suggestions, which helped us improve the manuscript.
}

\section{Super-resolution and Prony's method}\label{sec:sr-prony-details}

\subsection{Optimal super-resolution}\label{sub:superres-intro}
 The fundamental limits of SR in the sparse model were investigated in several works in the recent years, starting with the seminal paper \cite{donoho1992a} and further developed in \cite{batenkov2020, batenkov2021b, demanet2015,li2021, liu2022b}. For the purposes of this paper, we shall consider the following min-max accuracy bounds derived in \cite{batenkov2021b}. In what follows, we re-formulate the original bounds in terms of the geometry of the complex nodes $x_j:=e^{2\pi\imath t_j}$ directly, thereby making the notations consistent with \eqref{eq:exp-sum}.

% \todo[inline]{Notation needs to be fixed throughout the intro to match what follows.}

% A fundamental issue is the stability with respect to the noise term $|n(\omega)|\leq\epsilon$. 
% \todo{Elaborate + refs}

\begin{defn}[Minimax rate] Let $F$ denote a set of signals of interest of the form $\mu=(\av,\xv)$, where $\av=(\alpha_1,\dots,\alpha_n)$ and $\xv=(x_1,\dots,x_n)$, {\color{blue} with $x_1<\dots<x_n$}. Given a signal $\mu\in F$ and a perturbation function $e(s)\in L_{\infty}([-\W,\W])$ with $\|e\|_{\infty}\leq\epsilon$, let $\widetilde{\mu}=\widetilde{\mu}(f_{\mu}+e)$ denote any deterministic algorithm which produces an approximation $(\widetilde{\av},\widetilde{\xv})\in F$. Then the min-max error rates for recovering each node $x_j$ and amplitude $\alpha_j$ are given by
\begin{align*}
  \mm^{\xv,j}(\epsilon, F, \Omega) &= \inf_{\widetilde{\mu}=(\widetilde{\av},\widetilde{\xv})} \ \sup_{\mu=(\av,\xv) \in F} \ \sup_{e:\ \|e\|_{\infty}\le \epsilon} |x_j-\widetilde{x_j}|,\\
  \mm^{\av,j}(\epsilon, F,\Omega) &= \inf_{\widetilde{\mu}=(\widetilde{\av},\widetilde{\xv})} \ \sup_{\mu=(\av,\xv) \in F} \ \sup_{e:\ \|e\|_{\infty}\le \epsilon} |\alpha_j-\widetilde{\alpha_j}|.
\end{align*}
\end{defn}

{\color{blue}
\begin{remark}
    The condition $x_1<\dots<x_n$ is imposed to avoid ambiguity in representing the solution to the SR problem, as the solutions can only be unique up to an arbitrary permutation of the nodes. This condition is further assumed throughout the manuscript without any explicit mention thereof.
\end{remark}
}

As noted already in \cite{donoho1992a}, the SR becomes difficult (and, in some sense, nontrivial) when some of the $n$ nodes $\{x_j\}_{j=1}^n$ may form ``clusters'' of extent much smaller than the Rayleigh-Nyquist limit $1/\Omega$. To make this notion precise, let $\delta$ denote a-priori minimal separation between any two $x_j$ (the definition below is more general than the one used in \cite{batenkov2021b}). 

\begin{defn}[Clustered configuration]\label{def:cluster}
Given $n\geq 2$, the set of nodes $\{x_1,\dots,x_n\}$ is said to form a $(K_x,n,\zeta,\ell_*,\delta,\tau,\eta,T)$-cluster if there exists a partition $\bigcup_{s\in [\zeta]}\mathcal{C}_s=[n]$, with $\mathcal{C}_s\bigcap \mathcal{C}_{s'}=\emptyset$ for $s,s'\in [\zeta],\ s\neq s'$, such that:
\begin{enumerate}
    \item There exists a compact $K_x\subseteq \mathbb{C}$ such that $x_i\in K_x, \ i=1,\dots,n$.
    \item $\operatorname{card}\left(\mathcal{C}_s \right)=\ell_s$ for $s\in [\zeta]$ and $\ell_* := \max_{s\in [\zeta]}\ell_s$;
    \item there exist $\tau>1$ and $0<\delta<1$ such that for any $i,j\in \mathcal{C}_s, \ s\in [\zeta]$
    \begin{equation*}
    \delta \leq \left|x_i-x_j \right|\leq \tau \delta;
    \end{equation*}
    \item there exist $\eta>1$ and $T>\tau \delta$ such that for any $i\in \mathcal{C}_s,\ j\in \mathcal{C}_{s'}$ where $s,s'\in[\zeta],\ s\neq s'$ \begin{equation*}
    T\leq \left|x_i-x_j \right|\leq \eta T.
    \end{equation*}
    \end{enumerate}
    Here $\operatorname{card}\left(A \right)$ stands for the cardinality of the set $A$.
\end{defn}

In the remainder of the paper we will assume that $K_x=\mathbb{S}^1$ is the unit circle (this assumption corresponds to \eqref{eq:ft-of-measure}, which is the model of interest in this paper) and omit $K_x$ from the clustered configuration parameters.
For clustered configurations, in \cite{batenkov2021b} the worst-case bounds for the recovery problem were established as follows. Define the \emph{super-resolution factor} $\srf:=(\Omega\delta)^{-1}$.

\begin{thm}[\cite{batenkov2021b}]\label{thm:minmax}
  Let $F$ denote the set of signals whose node set forms a cluster with $\ell_1=\ell_*$ and $\ell_2=\ell_3=\dots=\ell_p=1$, and $\{|\alpha_j| \}_{j=1}^n$ bounded from below and above. For $\srf:={1\over{\Omega\delta}} \geq O(1)$, and
  $\epsilon \lessapprox (\Omega\delta)^{2\ell_*-1}$:
  \begin{align*}
    \mm^{\xv,j}(\epsilon,F,\Omega) &\asymp
                  \begin{cases}
                    \srf^{2\ell_*-1} \delta \epsilon & x_j \in \mathcal{C}_1, \\
                    {\epsilon\over\Omega} & x_j \notin \mathcal{C}_1,
                  \end{cases} \\
    \mm^{\av,j}(\epsilon,F,\Omega) &\asymp
                  \begin{cases}
                    \srf^{2\ell_*-1}  \epsilon & x_j \in \mathcal{C}_1, \\
                     \epsilon & x_j \notin \mathcal{C}_1.
                  \end{cases}
  \end{align*}
\end{thm}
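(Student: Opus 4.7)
My plan is to establish the matching upper and lower bounds separately, since the statement is a two-sided min-max estimate.

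\emph{Lower bound.} I would use a two-point (Le Cam-type) argument: for each target coordinate, exhibit two signals $\mu_0,\mu_1\in F$ whose parameters differ by the claimed rate while $\|f_{\mu_0}-f_{\mu_1}\|_\infty\lessapprox\epsilon$ on $[-\Omega,\Omega]$. The central computation concerns the main cluster $\mathcal{C}_1$. Perturbing the $2\ell_*$ complex parameters of the cluster (the $\ell_*$ nodes and $\ell_*$ amplitudes) corresponds to perturbing the Prony moments $m_r=\sum_{k=1}^{\ell_*}\alpha_k x_k^r$ through a Jacobian of confluent-Vandermonde type; inverting this Jacobian and translating moment changes into changes of the bandlimited Fourier data via Bernstein-type estimates $|s^r|\le \Omega^r$ on $[-\Omega,\Omega]$ shows that the cheapest perturbation of a cluster node by $t$ costs $\asymp t\cdot\delta^{-1}\srf^{-(2\ell_*-1)}$ in $\|\cdot\|_\infty$. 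Matching to $\epsilon$ yields $t\asymp\delta\,\srf^{2\ell_*-1}\epsilon$; the amplitude rate follows from an analogous computation, with the perturbation taken in the amplitude direction, producing one fewer factor of $\delta$. For a node outside $\mathcal{C}_1$ the relevant Vandermonde block is well conditioned because its separation from the rest is at least $T\gg\delta$, giving the classical rates $\epsilon/\Omega$ for nodes and $\epsilon$ for amplitudes.

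\emph{Upper bound.} I would construct an explicit estimator attaining the rate. Following the strategy indicated in \prettyref{sub:dpm-intro}, the natural choice is a decimation procedure: sample $f$ at a suitably chosen arithmetic progression of frequencies $\lambda k$, reducing to a Prony-type exponential fitting problem with transformed nodes $z_j=x_j^{\lambda}$, and then apply a stable Prony-type estimator on the decimated data. The decimation step $\lambda$ is tuned against the cluster scale $\delta$ and the bandwidth $\Omega$; the resulting errors on $z_j$ and the new amplitudes are then translated back through the chain rule $z_j\mapsto x_j$. For isolated $x_j$ this translation produces the factor $1/\Omega$ directly; for $x_j\in\mathcal{C}_1$ the local inverse of the confluent-Vandermonde block, analyzed exactly as in the lower bound, inflates the error by the matching factor $\srf^{2\ell_*-1}$.

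\emph{Main obstacle.} The delicate part is the quantitative confluent-Vandermonde analysis: one needs both a sharp upper estimate of its inverse norm (for the upper bound) and a matching lower estimate together with an explicit witness parameter direction (for the lower bound), so as to certify that the worst-case scaling $\srf^{2\ell_*-1}$ is genuinely attained rather than merely an upper envelope. Making this analysis uniform over all configurations in $F$ and over all admissible perturbations $e$ with $\|e\|_\infty\le\epsilon$, while simultaneously controlling the inter-cluster coupling through the separation assumption $T\gg\delta$ (so that the different $\mathcal{C}_s$ contribute essentially independently to the Fourier data), is the main technical hurdle and is precisely the step where the detailed geometry of the node configuration enters.
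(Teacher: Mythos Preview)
This theorem is not proved in the present paper at all: it is quoted verbatim from \cite{batenkov2021b}, and the surrounding text only describes the nature of that proof. So your proposal should be compared against what the paper reports about the argument in \cite{batenkov2021b}, not against anything proved here.

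Your lower-bound sketch (a two-point indistinguishability argument driven by the confluent/Vandermonde Jacobian of the cluster parameters) is indeed the right mechanism and is consistent with how such bounds are obtained in \cite{batenkov2021b}.

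The upper bound, however, has a genuine gap. You propose to realize it by the decimation procedure of \prettyref{sub:dpm-intro} followed by a ``stable Prony-type estimator''. But the paper is explicit that the upper bound in \cite{batenkov2021b} is \emph{not} achieved this way: it is achieved by a non-tractable oracle estimator that returns \emph{any} signal whose Fourier data are $\epsilon$-close to the measurements (see the paragraph following \prettyref{thm:minmax}). The paper further states that no tractable algorithm is known to provably attain the upper bound, and that the optimality of DPM is only a conjecture (end of \prettyref{sub:dpm-intro}). In other words, the very step you treat as routine --- ``apply a stable Prony-type estimator on the decimated data and translate back'' --- is exactly the open problem that the rest of this paper is devoted to (in the restricted regime $\Omega=\text{const}$), and is not available as an ingredient for proving \prettyref{thm:minmax}. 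To repair the upper-bound half you would need to replace the decimation argument by the oracle/feasibility estimator and then carry out the local stability analysis for pairs of $\epsilon$-indistinguishable signals, which is how \cite{batenkov2021b} proceeds.
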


For discussion of the relations of \prettyref{thm:minmax} to other works on the subject, the reader is referred to \cite[Section 1.4]{batenkov2021b}. Related results are known in the signal processing literature for Gaussian noise model: \cite{lee1992} provides similar bounds in terms of Cramer-Rao bound (in the case of a single cluster), and the expression for the threshold SNR \emph{for detection} was shown to scale like $\srf^{-2}$ for $n=2$ in \cite{stoica1995}, which is also consistent with \cite{shahram2005}.

The upper bound on the minmax error is realized by a non-tractable ``oracle-type'' algorithm, which, given a measurement function $g(s)=f_{\mu}(s)+e(s)$, produces \emph{some} signal parameters $\{\alpha_j',x_j'\}_{j=1}^n$ for which $\max_{s\in[-\Omega,\Omega]} |\sum_{j=1}^n \alpha_j' x_j'^s - g(s)| \leq \epsilon$. We are not aware of any tractable method which provably achieves the upper bound on $\mm$, although some partial results in this direction are known. The ESPRIT algorithm (originally proposed in \cite{roy1989}) was analyzed in \cite{li2020a} in the case of a discrete measurement model, showing that the nodes errors are bounded by $\srf^{2\ell_*-2}\epsilon$ provided that $\epsilon\lessapprox (\Omega\delta)^{4\ell_*-3}/\Omega$. The MUSIC algorithm was partially analyzed in \cite{li2021}, where  perturbation bounds on the noise-space correlation function were established; however this analysis does not imply an effective bound on $\Lambda$.

\subsection{Prony's method}\label{sec:prony-method-setup}

PM reduces the problem to a three-step procedure, which involves solution of two linear systems, in combination with a root-finding step, as described in \prettyref{alg:classical-prony} {\color{blue} and in the following}.

 Denote the (true) nodes and the amplitudes of Prony's problem as $\left\{x_j \right\}_{j=1}^n$ and $\left\{\alpha_j \right\}_{j=1}^n$, respectively. The so-called (unperturbed) monic Prony polynomial is given by
\begin{equation}\label{eq:prony-monic-def}
    p\left(z\right)=\prod_{i=1}^{n}\left(z-x_{i}\right) = z^n+\sum_{j=0}^{n-1}p_{j}z^{j}.
\end{equation}
    
The coefficients of $p(z)$ are obtained by solving the linear system with a Hankel matrix $H_n$:
\begin{align}
    &H_n \cdot \operatorname{col}\left\{ p_i\right\}_{i=0}^{n-1}  = -   \operatorname{col}\left\{m_i \right\}_{i=n}^{2n-1}, \quad
    H_n: = \begin{bmatrix}
m_0 & m_1 & \dots &m_{n-1}\\
\vdots & \vdots & \vdots & \vdots\\
m_{n-1} & m_n & \dots & m_{2n-2}
 \end{bmatrix}.\label{eq:UnpertHanekl}
\end{align}
We assume that the algebraic moments $m_{k}=\sum_{j=1}^{n}\alpha_{j}x_{j}^{k}$ are measured with perturbations (disturbances) of size $\epsilon\geq 0$. The latter give rise to a perturbed Hankel matrix 
\begin{align}
    &\tilde{H}_n: = H_n+\epsilon \mathrm{D}, \quad \mathrm{D} : = \begin{bmatrix}
d_0 & d_1 & \dots &d_{n-1}\\
\vdots & \vdots & \vdots & \vdots\\
d_{n-1} & d_n & \dots & d_{2n-2}
 \end{bmatrix}, \quad d_i = \text{O}(1), \ i\in \left\{0,\dots, 2n-2 \right\}.\label{eq:PertHankel}
\end{align}
The solution of the linear system 
\begin{align}
    &\tilde{H}_n\cdot \operatorname{col}\left\{q_i \right\}_{i=0}^{n-1} = -   \operatorname{col}\left\{m_i+\epsilon d_i \right\}_{i=n}^{2n-1} \label{eq:PertHanekl}
\end{align}
provides the coefficients of a perturbed monic Prony polynomial
\begin{equation}\label{eq:q-monic-def}
q\left(z;\left\{ d_i\right\},\epsilon\right)=\prod_{i=1}^{n}\left(z-\tilde{x}_{i}\right) = z^n+\sum_{j=0}^{n-1}q_{j}z^{j}.
\end{equation}
The roots of $q\left(z;\left\{ d_i\right\},\epsilon\right)$ are then used to obtain the perturbed amplitudes $\left\{\tilde{\alpha}_j \right\}_{j=1}^n$ {\color{blue} by solving a Vandermonde linear system}.

Throughout the paper, we will consider the number of nodes (resp. amplitudes) $n$ to be \emph{fixed}. We suppose that the amplitudes satisfy $\mathfrak{m}_{\alpha}\leq |\alpha_i|\leq \mathfrak{M}_{\alpha}, \ i=1,\dots,n$ for some $0<\mathfrak{m}_{\alpha}<\mathfrak{M}_{\alpha}$.

\subsection{(Apparent) instability of Prony's method}\label{sec:pm-apparent-instability}
PM is generally considered to be suboptimal for solving \eqref{eq:exp-sum} when $N\gg 2n-1$ (see e.g. \cite{kahn1992, vanblaricum1978} and references therein) due to its inability to utilize the additional measurements. It is also somewhat of a ``folk knowledge''  that PM is ``numerically unstable'', usually contributed to the fact that it involves a rootfinding step, while extracting roots is known to be ill-conditioned for root clusters (which is precisely our case of $\srf \gg 1$). While we are not aware of any rigorous numerical analysis of Prony's method in the literature, a rudimentary computation seems to confirm the above claims.

\begin{algorithm2e}[hbt]
  \SetKwInOut{Input}{Input} \SetKwInOut{Output}{Output}
  \Input{Sequence $\{\tilde{m}_k\equiv f_{\mu}(k)+\epsilon_k\},\;k=0,1,\dots,2n-1$}
  \Output{Estimates for the nodes $\{x_j\}$ and amplitudes
    $\{\alpha_j\}$}
    Construct the Hankel matrix
    $$
    \tilde{H}_n = 
    \begin{bmatrix}
\tilde{m}_{0} & \tilde{m}_{1} & \tilde{m}_{2} & \dots & \tilde{m}_{n-1}\\
\vdots\\
\\
\tilde{m}_{n-1} & \tilde{m}_{n} & \tilde{m}_{n+1} & \dots & \tilde{m}_{2n-2}
\end{bmatrix}
    $$ 

    Assuming $\operatorname{det}\tilde{H}_n \neq 0$, solve the linear system
    $$
    \tilde{H}_n \cdot \operatorname{col}\{q_i\}_{i=0}^{n-1} = -\operatorname{col}\{\tilde{m}_i\}_{i=n}^{2n-1}
    $$

	Compute the roots $\{\tilde{x}_j\}$ of the (perturbed) Prony polynomial $q(z) = z^n+\sum_{j=0}^{n-1} q_{j}z^j$ \;
	Construct $\tilde{V}:= \big[\tilde{x}_j^k\big]_{k=0,\dots,n-1}^{j=1,\dots,n}$  and solve the linear system
    $$
    \tilde{V} \cdot \operatorname{col}\{\tilde{\alpha}_i\}_{i=1}^n = \operatorname{col}\{\tilde{m}_i\}_{i=0}^{n-1}
    $$

	\Return the estimated $\tilde{x}_j$ and $\tilde{\alpha}_j$.
  \caption{The Classical Prony's method}
  \label{alg:classical-prony}
\end{algorithm2e}

\begin{prop}\label{prop:prony-naive-analysis}
    {\color{blue} Suppose $\xvec$ forms a single cluster ($\ell=n$) configuration and $\Omega=2n-1$, with the perturbations $\epsilon_k$ in the moments $m_k$ satisfying $|\epsilon_k|\leqslant \epsilon$. Then
    \begin{enumerate}
        \item If $\epsilon \lessapprox \delta^{2\ell-2}$, the coefficients of the Prony polynomial are recovered with accuracy
        
        \begin{equation}\label{eq:prony-poly-coeff-magnitude}
        |p_i-q_i|\lessapprox \delta^{2-2\ell}\epsilon.
        \end{equation}
        
        \item If $\epsilon \lessapprox \delta^{3\ell-3}$, the nodes $\{x_j\}_{j=1}^n$  are recovered  by Prony's method (Algorithm \ref{alg:classical-prony}) with accuracy $|\tilde{x}_j-x_j| \lessapprox \delta^{3-3\ell}\epsilon$.
    \end{enumerate}
    }
\end{prop}

Before embarking on the proof, let us state a well-known fact which can be checked by direct calculation.

\begin{prop}\label{prop:vand-decomp}
    The Hankel matrix $H_n$ admits the Vandermonde factorization $H_n=VCV^{\top}$ where $V=\left[ x_j^k\right]_{k=0,\dots,n-1}^{j=1,\dots,n}$ and $C=\diag\{\alpha_1,\dots,\alpha_n\}$.
\end{prop}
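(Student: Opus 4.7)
The statement is essentially a direct matrix-multiplication identity, and the paper explicitly flags it as verifiable by direct calculation, so the proposal is short. The plan is to compute the $(i,j)$ entry of $VCV^{\top}$ for $i,j\in[n]$ and match it to $(H_n)_{ij}=m_{i+j-2}=\sum_{k=1}^n \alpha_k x_k^{i+j-2}$.

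First, I would fix the index conventions: the given $V=[x_j^k]_{k=0,\dots,n-1}^{j=1,\dots,n}$ means $V_{ij}=x_j^{i-1}$ (row index $i=1,\dots,n$ counting the power plus one, column index $j$ counting the node), and $C=\operatorname{diag}\{\alpha_1,\dots,\alpha_n\}$ is diagonal with $C_{kk}=\alpha_k$. Then $V^{\top}$ has entries $(V^{\top})_{jk}=V_{kj}=x_j^{k-1}$.

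Next, I would perform the computation
\begin{equation*}
(VCV^{\top})_{ij} \;=\; \sum_{k,\ell=1}^{n} V_{ik}\,C_{k\ell}\,(V^{\top})_{\ell j} \;=\; \sum_{k=1}^{n} V_{ik}\,\alpha_k\,V_{jk} \;=\; \sum_{k=1}^{n} x_k^{i-1}\,\alpha_k\,x_k^{j-1} \;=\; \sum_{k=1}^{n} \alpha_k\, x_k^{i+j-2},
\end{equation*}
where the second equality uses the diagonality of $C$. Recalling that $m_r=\sum_{k=1}^n \alpha_k x_k^{r}$ and that by \eqref{eq:UnpertHanekl} one has $(H_n)_{ij}=m_{i+j-2}$ for $i,j=1,\dots,n$, this gives $(VCV^{\top})_{ij}=(H_n)_{ij}$, completing the verification.

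There is no real obstacle here; the only thing to be careful about is aligning the $0$-based indexing in the definition of $V$ with the $1$-based indexing used in the matrix product and in $H_n$. Once the index conventions are written out explicitly, the identity follows in one line. This factorization is the standard ``confluent'' decomposition underlying Prony-type identities and will be used immediately afterwards via \prettyref{Lem:HomogPronyPoly} to produce the explicit form \eqref{eq:PbarExplicitExpr} for $\bar{p}(z)$.
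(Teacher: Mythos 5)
Your proof is correct and is exactly the ``direct calculation'' the paper alludes to (the paper itself states the proposition without writing out the computation). The entrywise verification $(VCV^{\top})_{ij}=\sum_{k}\alpha_k x_k^{i+j-2}=m_{i+j-2}=(H_n)_{ij}$ is all that is needed.
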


\begin{proof}[Proof of \prettyref{prop:prony-naive-analysis}]
    {\color{blue}
    Denote $\vec{m}=\operatorname{col}\{m_i\}_{i=n}^{2n-1}$, $\vec{\tilde{m}}=\operatorname{col}\{\tilde{m}_i\}_{i=n}^{2n-1}$. The polynomial coefficient vectors are denoted by $\vec{p}=\operatorname{col}\{p_i\}_{i=0}^{n-1}$ and $\vec{q}=\operatorname{col}\{q_i\}_{i=0}^{n-1}$, with $\Delta \vec{p} = \vec{q}-\vec{p}$. 

    We start by proving \eqref{eq:prony-poly-coeff-magnitude}.  Let $\|\cdot\|$ denote any vector norm and the associated induced matrix norm. Notice that $\|\vec{p}\| \asymp 1$ since the exact roots satisfy $|x_j|\equiv 1$. By well-known results from the literature  (see, e.g. \cite[Eq. 4.9]{batenkov2020}), we have $\sigma_{\min}(V) \gtrapprox \delta^{\ell-1}$. Plugging into \prettyref{prop:vand-decomp} we obtain:
    \begin{align*}
    \frac{1}{\|H_n^{-1}\|} &\gtrapprox \sigma_{\min}(H_n) \gtrapprox \sigma_{\min}(V)^2 \mathfrak{m}_{\alpha} \gtrapprox \mathfrak{m}_{\alpha} \delta^{2\ell-2}.
    \end{align*}

    Put $E:=\tilde{H}_n-H_n$ and $\vec{e}:=\vec{\tilde{m}}-\vec{m}$. Define $F:=EH_n^{-1}$. By the above bound, there exists a constant $c_1$ s.t. for all $\epsilon \leq c_1 \mathfrak{m}_{\alpha}\delta^{2\ell-2}$ we have $\|F\| < \frac{1}{2}$. Consequently, for all $\epsilon \leq c_1 \mathfrak{m}_{\alpha}\delta^{2\ell-2}$ we have
    \begin{align*}
        H_n\vec{p} &= \vec{m}\\
        (H_n+E)(\vec{p}+\Delta\vec{p})&=\vec{\tilde{m}}\\
        (H_n+E)\Delta\vec{p} &= \underbrace{\vec{e}-E\vec{p}}_{:=\vec{r}}\\
        \Delta\vec{p}&=(H_n+E)^{-1}\vec{r}= H_n^{-1}(I+F)^{-1}\vec{r}\\
        \|\Delta\vec{p}\| & \lessapprox \|H_n^{-1}\| \|\vec{r}\| \lessapprox \mathfrak{m}_{\alpha}^{-1} \delta^{2-2\ell} \epsilon.
    \end{align*}
    This proves \eqref{eq:prony-poly-coeff-magnitude}. 
    }

    % {\color{gray}
    % In particular, we obtain
    % $$
    % \frac{\|\Delta\vec{p}\|}{\|\vec{p}\|} \lessapprox \delta^{2-2\ell}\epsilon:=\eta,\qquad \text{whenever } \epsilon \lessapprox \delta^{2\ell-2}.
    % $$

    % As is well-known from numerical analysis (see e.g. \cite[p.38]{wilkinson1994}), if $z_r$ is an isolated zero of $f(z)$, then for $\epsilon' \ll 1$, the corresponding zero $z_r(\epsilon')$ of the function $f(z)+\epsilon' g(z)$ satisfies
    % $$
    % |z_r-z_r(\epsilon')| \leq\left| \epsilon \frac{g(z_r)}{f'(z_r)}\right| + O(\epsilon'^2).
    % $$
    % Applying this bound to our situation with $f=p$, $\epsilon' g = q-p$ and $\epsilon'=\eta$, we have
    
    % $$
    % |\tilde{x}_j-x_j| \lessapprox \left|\frac{q(x_j)}{p'(x_j)}\right|+O(\eta^2).
    % $$
    % Since $\|\vec{p}\|=O(1)$ and $|z_j|=1$, we cannot have generically smaller bound than $q(x_j)=O(\eta)$. On the other hand, $|p'(x_j)|=\left|\prod_{i\neq j} (x_i-x_j)\right| \gtrapprox \delta^{\ell-1}$, which finally implies
    % $$
    % |\tilde{x}_j-x_j| \lessapprox \delta^{1-\ell}\eta+O(\eta^2) \lessapprox \delta^{3-3\ell}\epsilon.
    % $$
    % }

    {\color{blue}
    To derive the second bound in \prettyref{prop:prony-naive-analysis} we shall utilize Rouche's theorem \cite{conway1978functions}. We will also obtain quantitative estimate on the size of the perturbation $\epsilon$ required to attain the bound. 

    To apply Rouche's theorem, we pick $j=1,\dots,n$ and set $z=z(\theta)=x_j+\rho_* e^{i\theta}$, where $\theta$ is an arbitrary angle. We will seek a condition on $\rho_*$ which ensures that, for all $\theta$, the following inequality holds:
    $$
    |p(z(\theta))| > |q(z(\theta))-p(z(\theta))|.
    $$
    By Rouche's theorem, this would imply that $q(z)$ has a root in the ball $B(x_j,\rho_*)$ and therefore $|x_j-\tilde{x}_j| \leq \rho_*$.

    Due to the explicit form of $p(z)$ as in \prettyref{eq:prony-monic-def}, we have $|p(z(\theta))| \geqslant c_1 \rho_* \delta^{\ell-1}$. On the other hand, using the estimate \eqref{eq:prony-poly-coeff-magnitude} and assuming $\rho_* < 1/3$ (for simplicity only), we have
    \begin{equation}\label{eq:r-magnitude-naive-bound}
    |q(z(\theta))-p(z(\theta))| \leqslant c_2 \delta^{2-2\ell}\epsilon.
    \end{equation}

    Now choose $\rho_*=\frac{2c_2}{c_1}\delta^{3-3\ell}\epsilon$. To ensure the condition $\rho_*<1/3$ we require $\epsilon \leqslant \frac{c_1}{6c_2}\delta^{3\ell-3}$. 

    This completes the proof. \qedhere
    }
\end{proof}

{\color{blue} As the following proposition demonstrates,} apparent difficulties arise also when estimating the errors in recovering the $\alpha_j$'s, even assuming that the nodes have been recovered with optimal accuracy. 

\begin{prop}\label{prop:ampl-naive}
    Let $\vec{m}=\operatorname{col}\{m_i\}_{i=0}^{n-1}$, $\vec{\tilde{m}}=\operatorname{col}\{\tilde{m}_i\}_{i=0}^{n-1}$. Under the assumptions of \prettyref{prop:prony-naive-analysis}, for $\epsilon \lessapprox \delta^{3\ell-3}$, let $\tilde{\xvec}$ be a node vector satisfying $|\tilde{x}_j-x_j|\lessapprox \delta^{2-2\ell}\epsilon$. Let $\tilde{\av}=\operatorname{col}\{\tilde{\alpha}_i\}_{i=1}^{n}$ be the solution of the linear system $\tilde{V} \tilde{\av} = \tilde{\vec{m}}$ where $\tilde{V}$ is the Vandermonde matrix corresponding to $\tilde{\xvec}$, and $\|\tilde{\vec{m}}-\vec{m}\|_{\infty} \leq \epsilon$. Then $|\tilde{\alpha}_j-\alpha_j|\lessapprox \delta^{3-3\ell}\epsilon$ for all $j=1,\dots,n$.
\end{prop}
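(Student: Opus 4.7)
The plan is to write both amplitude systems simultaneously: the unperturbed identity $V\av = \vec{m}$, where $V := [x_j^k]_{k=0,\ldots,n-1}^{j=1,\ldots,n}$, and the given perturbed system $\tilde{V}\tilde{\av} = \vec{\tilde{m}}$. Subtracting them yields
\[
\tilde{V}(\tilde{\av}-\av) = (\vec{\tilde{m}}-\vec{m}) - (\tilde{V}-V)\av,
\]
so it suffices to bound $\|\tilde{V}^{-1}\|_{\infty}$ together with the two summands on the right, and then apply the triangle inequality.

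The first right-hand contribution is at most $\epsilon$ by hypothesis. For the second, each entry of $\tilde{V}-V$ expands as $\tilde{x}_j^k - x_j^k = (\tilde{x}_j-x_j)\sum_{i=0}^{k-1}\tilde{x}_j^{\,i}x_j^{k-1-i}$; since $|x_j|=1$, and the assumption $\epsilon\lessapprox\delta^{3n-3}$ combined with $|\tilde{x}_j-x_j|\lessapprox\delta^{2-2n}\epsilon$ forces $|\tilde{x}_j|\leq 1+o(1)$, each such entry has magnitude $\lessapprox k\cdot\delta^{2-2n}\epsilon$. With $n$ fixed and $\av$ bounded, this gives $\|(\tilde{V}-V)\av\|_{\infty}\lessapprox\delta^{2-2n}\epsilon$.

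The crux is the Vandermonde conditioning bound $\|\tilde{V}^{-1}\|_{\infty}\lessapprox\delta^{1-n}$. I would first observe that combining $|\tilde{x}_j-x_j|\lessapprox\delta^{2-2n}\epsilon$ with $\epsilon\lessapprox\delta^{3n-3}$ gives $|\tilde{x}_j-x_j|\lessapprox\delta^{n-1}$, which for sufficiently small implicit constant is dominated by the separation $\delta$; hence the perturbed nodes still form a single cluster whose pairwise distances are $\asymp\delta$. I would then invoke the Lagrange-interpolation representation of the inverse: the $j$-th row of $\tilde{V}^{-1}$ is the coefficient vector of $\ell_j(z)=\prod_{k\neq j}(z-\tilde{x}_k)/\prod_{k\neq j}(\tilde{x}_j-\tilde{x}_k)$, whose numerator coefficients are bounded elementary symmetric functions of near-unit-modulus quantities, while the denominator has modulus $\asymp\delta^{n-1}$. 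Entrywise this yields $|(\tilde{V}^{-1})_{ji}|\lessapprox\delta^{1-n}$, hence the claimed $\infty$-norm bound.

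Combining the three estimates,
\[
\|\tilde{\av}-\av\|_{\infty}\lessapprox \delta^{1-n}\bigl(\epsilon+\delta^{2-2n}\epsilon\bigr) \asymp \delta^{3-3n}\epsilon,
\]
which, since $\ell=n$ here, is exactly the announced $\delta^{3-3\ell}\epsilon$. The main obstacle is the Vandermonde conditioning estimate together with the associated separation-preservation argument; everything else is a direct application of the triangle inequality to the perturbation identity above. This decomposition also makes transparent the source of the apparent suboptimality: the contribution $(\tilde{V}-V)\av$ already carries the full $\delta^{2-2n}$ amplification inherited from the node errors, and a worst-case estimate sees no cancellation between it and the direct measurement-error term $\vec{\tilde{m}}-\vec{m}$ — precisely the kind of cancellation that the paper's refined analysis is designed to capture.
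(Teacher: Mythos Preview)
Your argument is correct and essentially the same as the paper's: the paper simply invokes the textbook relative-error bound \eqref{eq:classical-error-lin} with $\kappa(V)\asymp\delta^{1-\ell}$, $\|\tilde{V}-V\|\asymp\delta^{2-2\ell}\epsilon$, and $\Delta\vec{m}\lessapprox\delta^{1-\ell}\epsilon$, whereas you derive the same estimate directly from the perturbation identity $\tilde{V}(\tilde{\av}-\av)=(\vec{\tilde m}-\vec{m})-(\tilde{V}-V)\av$ together with the Lagrange-interpolation bound $\|\tilde{V}^{-1}\|_\infty\lessapprox\delta^{1-n}$ (after checking the perturbed separation is preserved). The ingredients and the resulting $\delta^{3-3\ell}\epsilon$ bound are identical.
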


\begin{proof}
    Denote $\av=\operatorname{col}\{\alpha_i\}_{i=1}^{n}$, and $\Delta\vec{\alpha}=\tilde{\av}-\av$. {\color{blue}
    A well-known bound (see e.g., \cite[eq.(2.4)]{demmel1997}, or any classical numerical analysis textbook) states that for any vector norm $\|\cdot\|$ and associated induced matrix norm, we have
    \begin{equation}\label{eq:classical-error-lin}    
    \frac{\|\Delta \vec{\alpha}\|}{\|\vec{\alpha}\|} \leq \frac{\kappa(V)}{1-\kappa(V) \frac{\|\Delta V\|}{\|V\|}}  \left( \frac{\|\Delta V\|}{\|V\|} + \frac{\|\Delta \vec{m}\|}{\|\vec{m}\|}\right),\qquad \Delta V = \tilde{V}-V,\;\Delta\vec{m} = \tilde{\vec{m}}-\vec{m},
    \end{equation}
    where $\kappa(A) = \|A\|\|A^{-1}\|$.
    By \prettyref{prop:vand-decomp} and Gautschi's bound for $\|V^{-1}\|$ \cite{gautschi1974norm} we have $\kappa(V)\lessapprox \max_{i=1,\dots,n}\prod_{j\neq i} |x_i-x_j|^{-1} \lessapprox \delta^{1-\ell}$.
    }
    
     Since $\sigma_{\min}(V)\asymp \delta^{\ell-1}$ (as in the proof of \prettyref{prop:prony-naive-analysis}) and $\|\tilde{V}-V\|_{\infty} \approx \delta^{2-2\ell}\epsilon$, the condition on $\epsilon$ ensures that $\|\Delta V\| \kappa(V)<\frac{1}{2}$, and therefore \eqref{eq:classical-error-lin} yields in this case
    $$
    \frac{\|\Delta\vec{\alpha}\|}{\|\vec{\alpha}\|} \lessapprox \delta^{1-\ell} (\delta^{2-2\ell}\epsilon + \delta^{1-\ell}\epsilon) \lessapprox \delta^{3-3\ell} \epsilon.\qedhere
    $$
\end{proof}

Now let us consider the setting of \prettyref{sub:superres-intro} again, and suppose that $\ell=n$, $\Omega$ is constant and $\delta \rightarrow 0$. The above analysis suggests that Prony's method should not be used for solving \eqref{eq:exp-sum} already in this simplified setting, as the estimates derived in Propositions~\ref{prop:prony-naive-analysis} and \ref{prop:ampl-naive} are clearly suboptimal with respect to the min-max error of Theorem \ref{thm:minmax}. However, the actual numerical performance of Prony's method turns out to be much more accurate. Indeed, a simple numerical experiment (Figure \ref{fig:prony-initial-check}) demonstrates that Prony's method exhibits cancellation of errors in both steps 2 and 3, resulting in errors of the order $\delta^{2-2\ell}\epsilon$ for both the coefficients of the Prony polynomial and the roots themselves. On the other hand, replacing the error vector {\color{blue}$\Delta\vec{p}$} with a random complex vector $\Delta\vec{r}$ such that $|\Delta r_j|=|\Delta p_j|=|q_j-p_j|$, we observe that the corresponding root perturbations are of the order $\delta^{3-3\ell}\epsilon$, {\color{blue} exactly as predicted by \prettyref{prop:prony-naive-analysis}. Thus, a refined analysis, which is aimed at discovering the true error tolerance of Prony's method, should take into account the inter-relations between the errors in different coefficients in step 3.}

The bound in \prettyref{prop:ampl-naive} turns out to be extremely pessimistic as well, as evident from Figure \ref{fig:prony-initial-check} (right panel). Here again the bound is not attained as described, the Vandermonde structure of $V$ (resp. $\tilde{V}$) evidently playing a crucial role with regards to stability. Further experiments suggesting the asymptotic optimality of Prony's method both in terms of stability coefficient, and the threshold SNR, have been recently reported in \cite{decimatedProny} in the clustered geometry. Motivated by the numerical evidence as described, in this paper we close the aforementioned gap and derive the true error tolerance of Prony's method.

\begin{figure}
\begin{center}
\includegraphics[width=0.32\linewidth]{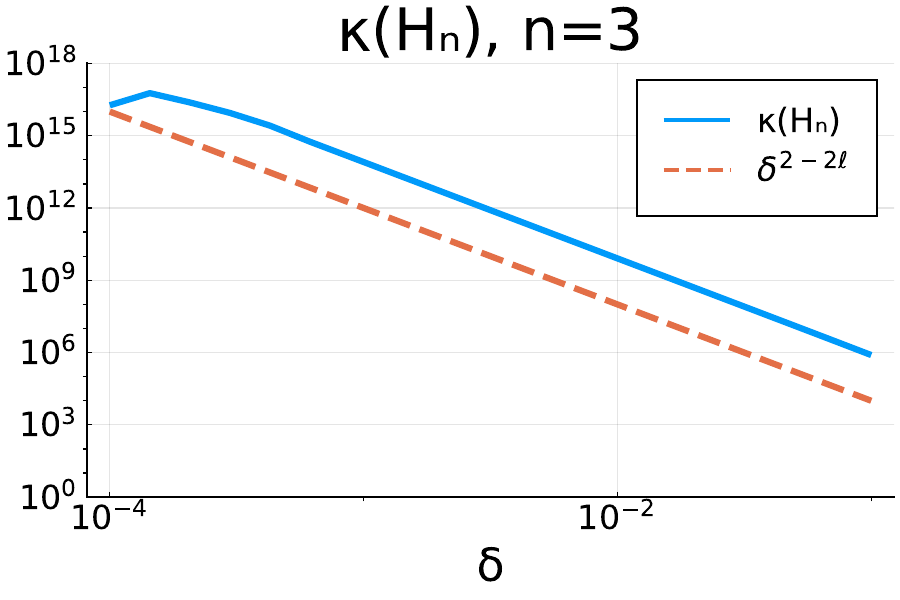}
\includegraphics[width=0.32\linewidth]{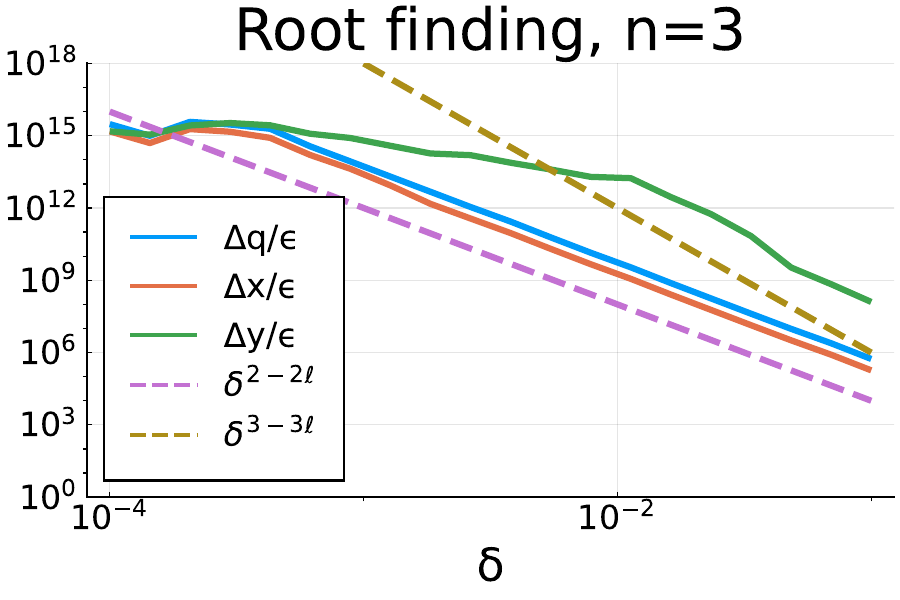}
\includegraphics[width=0.32\linewidth]{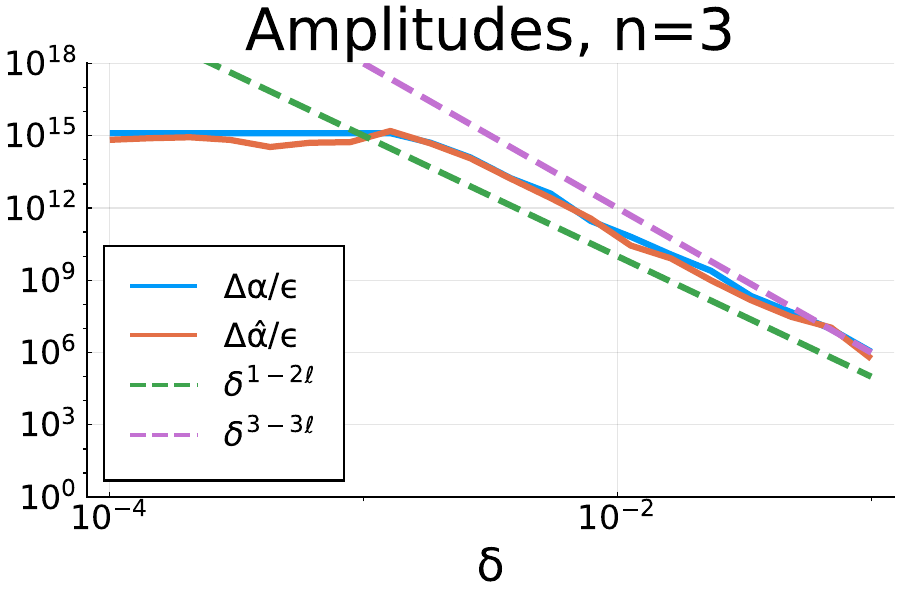}
\end{center}
\caption{Left: the condition number $\kappa(H_n)$ of the Hankel matrix scales as $\delta^{2-2\ell}$. Middle: both $\Delta\vec{q}$ and $|x_j-\tilde{x}_j|$ scale as $\delta^{2-2\ell}\epsilon$, showing that the errors in the coefficients of the Prony polynomial are not independent. For comparison, choosing a random perturbation of $\vec{p}$ with same magnitude as $\Delta\vec{q}$ and computing the roots $y_j$ of the resulting polynomial, we observe that $|y_j-x_j| = O(\delta^{3-3\ell}\epsilon)$, as predicted by \prettyref{prop:prony-naive-analysis}. Right: the amplitude errors committed by Prony's method (blue) and by replacing the recovered nodes with random perturbations (red). Here, in contrast, the bound of \prettyref{prop:ampl-naive} is not attained. \emph{All computations are done in floating-point arithmetic, with $\epsilon=10^{-15}$.}}
\label{fig:prony-initial-check}
\end{figure}

\section{Main results}\label{sec:main-results}
% \subsection{Setup}
  Consider the Prony's method in Algorithm \ref{alg:classical-prony}, {\color{blue} and employ the assumptions and the notations of \prettyref{sec:prony-method-setup} and \prettyref{sec:pm-apparent-instability}. We further assume that the nodes form a clustered configuration as in Definition \ref{def:cluster}.}
   
 The goal of this work is to derive efficient bounds on the errors $\left\{\left|x_j -\tilde{x}_j \right| \right\}_{j=1}^n$ and $\left\{\left|\alpha_j -\tilde{\alpha}_j \right| \right\}_{j=1}^n$, depending on $\epsilon$ and to derive the condition on $\epsilon$ which ensures the bounds.

% \subsection{Main results}
In this paper we use the  ``homogeneous'' version given in Algorithm \ref{alg:homo-prony}, which is computationally equivalent to \prettyref{alg:classical-prony}. Let $\bar{p}(z)$ denote the corresponding homogenized version of $p(z)$:

\begin{align}
\bar{p}\left(z\right)=\operatorname{det}\begin{bmatrix}1 & z & z^{2} & \dots & z^{n-1} & z^{n}\\
m_{0} & m_{1} & m_{2} & \dots & m_{n-1} & m_{n}\\
\vdots\\
\\
m_{n-1} & m_{n} & m_{n+1} & \dots & m_{2n-2} & m_{2n-1}
\end{bmatrix}\label{eq:HomogPronyPol}.
\end{align}

\begin{algorithm2e}[hbt]
  \SetKwInOut{Input}{Input} \SetKwInOut{Output}{Output}
  \Input{Sequence $\{\tilde{m}_k\}_{k=0}^{2n-1} $}
  \Output{Estimates for the nodes $\{x_j\}_{j=1}^n$ and amplitudes
    $\{\alpha_j\}_{j=1}^n$}
	Compute the roots $\{\tilde{x}_j\}_{j=1}^n$ of the Prony polynomial
	\begin{equation}\label{eq:qbar-def}
	\bar{q}(z)=\operatorname{det}\begin{bmatrix}1 & z & z^{2} & \dots & z^{n-1} & z^{n}\\
\tilde{m}_{0} & \tilde{m}_{1} & \tilde{m}_{2} & \dots & \tilde{m}_{n-1} & \tilde{m}_{n}\\
\vdots\\
\\
\tilde{m}_{n-1} & \tilde{m}_{n} & \tilde{m}_{n+1} & \dots & \tilde{m}_{2n-2} & \tilde{m}_{2n-1}
\end{bmatrix}.
\end{equation}

Solve the linear system $\tilde{V} \cdot \operatorname{col}\{\tilde{\alpha}_j\}_{j=1}^{n}  = \operatorname{col}\{\tilde{m}_j\}_{j=0}^{n-1}$ \;
\Return the estimated $\tilde{x}_j$ and $\tilde{\alpha}_j$ for $j=1,\dots,n$.
\caption{The Homogeneous Prony's method}
 \label{alg:homo-prony}
\end{algorithm2e}

We now state the main theorems of this work. 

The node errors $\left\{\left|x_i-\tilde{x}_i \right|\right\}_{i=1}^n$, are bounded as follows.

\begin{thm}\label{thm:node-accuracy-Dima}
{\color{blue}Let $\left\{x_i \right\}_{i=1}^n$ form a clustered configuration as in \prettyref{def:cluster}, with largest cluster of size $\ell_*$.} Denote $m_k=\sum_{s=1}^n \alpha_s x_s^k$, and let $\left\{d_s\right\} =O(1)$ be tolerance coefficients. For each $\epsilon$, let $\tilde{x}_i=\tilde{x}_i(\left\{ d_s\right\},\epsilon), \ i=1,\dots,n$ be the exact roots of the Prony polynomial $\bar{q}\left(z;\left\{ d_s\right\},\epsilon\right)$ in \eqref{eq:qbar-def}. Then for $\epsilon \lessapprox \delta^{2\ell_*-1}$ {\color{blue} the following holds: if $j\in \mathcal{C}_t$ (i.e. $x_j$ belongs to a cluster of size $\ell_t$), then} we have $|x_j-\tilde{x}_j| \lessapprox \delta^{2-2\ell_t}\epsilon$.
\end{thm}
{\color{blue} Note that the estimate on the node errors, which appears in Theorem \ref{thm:node-accuracy-Dima}, holds for all node approximations simultaneously, as the upper bound on the noise is independent of a particular node selection.} Theorem \ref{thm:node-accuracy-Dima} then leads to the following result for the amplitude errors $\left\{\left|\alpha_i-\tilde{\alpha}_i \right|\right\}_{i=1}^n$.

\begin{thm}\label{thm:coeffs-accuracy-Dima}
Let $\left\{d_s \right\}, \left\{\breve{d}_s\right\} = O(1)$ be two sets of tolerance coefficients. Under the assumptions of \prettyref{thm:node-accuracy-Dima}, and for each $\epsilon$ satisfying the condition $\epsilon\lessapprox\delta^{2\ell_*-1}$, let $\tilde{x}_s=\tilde{x}_s(\left\{ d_i\right\},\epsilon), \ s=1,\dots,n$ denote the exact roots of the perturbed Prony polynomial $\bar{q}\left(z;\left\{ d_i\right\},\epsilon\right)$, and consider $\operatorname{col}\left\{\tilde{\alpha}_i \right\}_{i=1}^n$ which satisfies
\begin{align*}
&\tilde{V}\cdot\operatorname{col}\left\{\tilde{\alpha}_i \right\}_{i=1}^n = \operatorname{col}\left\{m_i+\epsilon \breve{d}_i \right\}_{i=0}^{n-1},\quad  \tilde{V} = \begin{bmatrix}
1&  \dots &1 &1\\
\tilde{x}_1 &  \dots & \tilde{x}_{n-1}& \tilde{x}_n\\
\vdots  & \vdots & \vdots&\vdots\\
\tilde{x}_1^{n-1} &  \dots & \tilde{x}_{n-1}^{n-1}& \tilde{x}_n^{n-1}
\end{bmatrix}.
\end{align*}
{\color{blue} Given any $j\in \mathcal{C}_t$ (which corresponds to a cluster of size $\ell_t$), we have }
$$
|\alpha_j-\tilde{\alpha}_j| \lessapprox
\begin{cases}
    \delta^{1-2\ell_t}\epsilon & \ell_t > 1;\\
    \epsilon & \ell_t=1.
\end{cases}
$$
\end{thm}
{\color{blue} Similarly to Theorem \ref{thm:node-accuracy-Dima}, the latter theorem holds for all amplitudes simultaneously}. As a consequence, we show in \prettyref{thm:finite-precision} that \prettyref{alg:classical-prony} (and thus also \prettyref{alg:homo-prony}) retains the stability bounds when executed in finite precision arithmetic. However, we require several auxiliary definitions in order to state the precise result, and thus we defer these developments to \prettyref{sec:finite-precision}.

Some preliminary results, which are needed for the proofs, are given in Section \ref{Sec:Prelims}. The proof of the Theorem \ref{thm:node-accuracy-Dima} is the subject of Section \ref{Sec:PfMainthm}, while the proof of Theorem \ref{thm:coeffs-accuracy-Dima} is given in Section \ref{Sec:PfAmplitude}. Some numerical results are presented in \prettyref{sec:numerics}.

\subsection{Proof outline}
{\color{blue}
Before embarking on the technical details of the proofs of the main results, here we provide a simplified version of the key arguments involved.
\subsubsection{\prettyref{thm:node-accuracy-Dima}}
Examining the proof of the second estimate of \prettyref{prop:prony-naive-analysis} in \prettyref{sec:pm-apparent-instability}, we notice that it did not require any particular structure for the errors $q_i-p_i$, but only their magnitude via the estimate \eqref{eq:r-magnitude-naive-bound}. This resulted in a suboptimal bound.

Our strategy to improve the bound \eqref{eq:r-magnitude-naive-bound} on $q(z)-p(z)$ (more precisely, on the difference of the scaled polynomials, $\bar{r}(z)=\bar{q}(z)-\bar{p}(z)$) is to explicitly obtain an expansion of the form
$$
\bar{r}(z) = \sum_{k=1}^n \epsilon^k s_k(z)
$$
where $\{s_k(z)\}_{k=1}^n$ will be well-defined polynomials, depending also on $x_1,\dots,x_n$ (in addition to $z$), thereby making them multivariate, which in turn depend on the perturbation matrix $\mathrm{D}$ (as in \eqref{eq:PertHankel}). The key observation is that $s_k(z)$ can be further written as linear combination of terms of the form
$$
s_k(z;x_1,\dots,x_n) = \sum_{j\in I_k} D_j (z;x_1,\dots,x_n;\mathrm{D}) \prod_{i\in S_j} (x_i-z)\prod_{s,t\in T_j, s<t} (x_s-x_t)^2,
$$
where the sets $\left\{S_j\right\}, \left\{T_j\right\}$ as well as the index sets $\{I_k\}$ are defined combinatorially, while the coefficients $\{D_j\}$ are bounded uniformly (and depend on $\mathrm{D}$). This representation of $s_k$ is obtained by a careful consideration of symmetry properties of the associated Vandermonde type determinants resulting from the determinantal formulas \eqref{eq:HomogPronyPol} and \eqref{eq:qbar-def} of $\bar{p}(z)$ and $\bar{q}(z)$, respectively, while further employing \eqref{eq:PertHankel}.

To demonstrate the argument, consider the first order term $k=1$. Our calculations show that in this case
$$
s_1(z;x_1,\dots,x_n)=\sum_{j=1}^n D_j \prod_{i\neq j} (x_i-z) \prod_{i\neq k;i,k\in[n]\setminus\{j\}} (x_i-x_k)^2.
$$

Let us replace $\bar{r}(z)$ with the first order approximation $\epsilon s_1(z)$, and consider the Rouche's theorem argument from the proof of \prettyref{prop:prony-naive-analysis}. Also, for simplicity, consider the case of a single cluster $\ell=n$. Set $\rho_*=\bar{\rho}_* \delta,\;\epsilon=\bar{\epsilon}\delta^{2n-1}$ where $\bar{\rho}_*,\bar{\epsilon}$ are to be determined. Set $z(\theta)=x_m+\rho_*e^{\imath\theta}$ as before for some $m=1,\dots,n$. On the one hand:
$$
|\bar{p}(z(\theta))| \gtrapprox \delta^{n(n-1)}\rho_* \delta^{n-1}= c_1\bar{\rho}_*\delta^{n^2}
$$
for some constant $c_1$. On the other hand,
\begin{align*}
    |\epsilon s_1(z(\theta))| & \lessapprox \epsilon \biggl\{ \sum_{j\neq m} \delta^{n-2} \rho_* \delta^{(n-1)(n-2)} + \delta^{n-1}\delta^{(n-1)(n-2)} \biggr\}\\
    &\lessapprox \epsilon\delta^{(n-1)^2}=c_2\bar{\epsilon} \delta^{n^2}
\end{align*}
for some constant $c_2$. Therefore, the condition of Rouche's theorem can always be satisfied by choosing $\bar{\rho}_* < \frac{c_2}{c_1}\bar{\epsilon}$.

The general argument will be similar in spirit to what is presented above. However, significant technical elaboration is required due to the combinatorial structures arising in the analysis. Indeed, to complete the argument, we show that
\begin{enumerate}
    \item The arguments can be generalized to the multi-cluster geometry, and
    \item The restriction to first order term is not essential, as the higher order terms exhibit a recursive property, which allows to bound them  in terms of the first order term in $\epsilon$.
\end{enumerate}

\subsubsection{\prettyref{thm:coeffs-accuracy-Dima}}\label{sec:proof-outline-coeffs}
Recall that $\av=\operatorname{col}\{\alpha_i\}_{i=1}^{n}$, $\tilde{\av}=\operatorname{col}\{\tilde{\alpha}_i\}_{i=1}^{n}$, and $\Delta\vec{\alpha}=\tilde{\av}-\av$. Let $\vec{m}$, $\tilde{\vec{m}}$ and $\Delta \vec{m}$ be defined similarly for the first $n$ algebraic moments. The improvement of the bound on the amplitude errors $\left\{\left|\alpha_i-\tilde{\alpha}_i \right|\right\}_{i=1}^n$ in \prettyref{thm:coeffs-accuracy-Dima} is achieved by a careful analysis of the Vandermonde structure of the linear system $(V+\Delta V)(\vec{\alpha}+\Delta\vec{\alpha})=\vec{\tilde{m}}$. Substituting $\vec{\alpha}=V^{-1}\vec{m}$ we can extract $\Delta\vec{\alpha}$ as
$$
\Delta\vec{\alpha} = \tilde{V}^{-1}\Delta\vec{m}-(I-\tilde{V}^{-1}V)\vec{\alpha}.
$$
Our key observation here is that the entries of $\tilde{V}^{-1} V$ are given by the (perturbed) Lagrange interpolation basis polynomials evaluated at the original nodes $\{x_j\}_{j=1}^n$. Due to the specific structure of these polynomials, the correct estimate on $|\Delta\vec{\alpha}|$ can be obtained.

Additional complication arises from the multi-cluster case, as the above analysis no longer provides an accurate estimate; however utilizing correlations between the errors in the perturbed nodes $\tilde{x}_j-x_j$, we are able to obtain the correct bound.
}

% \begin{thm}\label{thm:prony-cluster-accuracy}
% Suppose that the node $x_j$ belongs to a cluster of size $\ell$ (there may be larger clusters in $\mathfrak{X}$). Let $\ell_*$ be the size of the largest cluster in $\mathfrak{X}$. Then, for $\epsilon \lessapprox \delta^{2\ell_*-1}$ we have $|x_j-\tilde{x}_j| \lessapprox \delta^{2-2\ell}\epsilon$ and $|c_j-\tilde{c}_j| \lessapprox \delta^{1-2\ell}\epsilon$. 
% \end{thm}

\section{Preliminary results}\label{Sec:Prelims}
The following lemma will be useful for obtaining an explicit representation of the Prony polynomials $\bar{p}(z)$, $\bar{q}(z)$ as in \eqref{eq:HomogPronyPol}, resp. \eqref{eq:qbar-def}.
% It will be convenient to work with homogenized versions of the monic Prony polynomials, which are given in the following lemma: 
\begin{lem}\label{Lem:HomogPronyPoly}

Let $\tilde{m}_j=m_j+\epsilon d_j, \ j=0,\dots ,2n-1$, let $\bar{q}(z)$ be given by \eqref{eq:qbar-def}, and let $p(z),q(z)$ be the monic Prony polynomials given in \eqref{eq:prony-monic-def}, \eqref{eq:q-monic-def}. Then
\begin{align}
    \bar{p}(z) &= (-1)^n\operatorname{det}\left(H_n \right)p(z)\label{eq:HomogPronyPol1}\\
    \bar{q}(z) &= (-1)^n\operatorname{det}\left(\tilde{H}_n \right)q(z)\label{eq:HomogPronyPol2}
\end{align}
\end{lem}
\begin{proof}
See Appendix \ref{Lem:HomogPronyPolyPf}.
\end{proof}

By Lemma \ref{Lem:HomogPronyPoly} and \prettyref{prop:vand-decomp} we immediately have
\begin{align}\label{eq:PbarExplicitExpr}
&\bar{p}(z) = (-1)^n\left(\prod_{k=1}^n \alpha_k\right) \cdot \left(\prod_{1\leq m<\ell\leq n}\left(x_{\ell}-x_m \right)^2\right)\cdot \left(\prod_{m=1}^n(z-x_m) \right).
\end{align}

{ We introduce several definitions that are essential for deriving an explicit $\epsilon$ expansion of the perturbed Prony polynomial $\bar{q}\left(z;\left\{ d_i\right\},\epsilon\right)$. The notations correspond to\footnote{In the original version of \cite{horn2012matrix} there are several typos in Section 0.8.12, which are fixed in the official errata.} \cite[Section 0.8.12]{horn2012matrix}. Given any $\kappa \in \left\{1,\dots,n+1 \right\}$, let $\mathcal{Q}_{\kappa}^{n+1}$ be the set of all increasing sequences of elements from $[n+1]$ of length $\kappa$, namely
\begin{equation}\label{eq:QkapDef}
\mathcal{Q}_{\kappa}^{n+1}=\left\{\left(i_1,\dots,i_{\kappa} \right) \ | \  1\leq i_1<\dots<i_{\kappa}\leq n+1\right\}.
\end{equation}
The latter is an ordered set with the standard lexicographic order. Given a matrix $A\in\mathbb{C}^{n\times n}$, let {\color{blue} $C_r(A)\in \mathbb{C}^{\binom{n}{r}\times \binom{n}{r}}, \ r\in [n]$} be the $r$-th multiplicative compound of $A$, which consists of all $r\times r$ minors of $A$. Namely, the rows and columns of $C_r(A)$ are indexed by $\beta,\gamma\in \mathcal{Q}^n_{r}$ with the entry $\left[ C_r(A)\right]_{\beta,\gamma}$ being the determinant of the $r\times r$ submatrix obtained by choosing from $A$ the rows in $\beta$ and columns in $\gamma$. We further define $C_0\left(A\right) = 1$. Let   $\operatorname{adj}_r(A), \ r\in [n-1] $ be the $r$-th adjugate of $A$, where we define $\operatorname{adj}_n(A)=1$ and $\operatorname{adj}_0(A)=\operatorname{det}{A}$. In particular, $\operatorname{adj}_1(A)$ is the standard adjugate of $A$.

Introducing
\begin{equation}\label{PronPert1}
\begin{array}{lll}
&G(z)=\left[\begin{array}{ccccc}
1 & z & \ldots & z^{n-1} & z^n \\
m_0 & m_1  & \ldots & m_{n-1} & m_n \\
& &\vdots & & \\
m_{n-1} & m_n &  \ldots & m_{2 n-2} & m_{2 n-1}
\end{array}\right], \ D = \begin{bmatrix}
0 & \ldots & 0 \\
d_0 & \dots & d_n \\
\vdots\\
d_{n-1} & \dots & d_{2 n-1}
\end{bmatrix}
\end{array}
\end{equation}
and using \eqref{eq:PertHankel} and \eqref{eq:HomogPronyPol}, we see that the perturbed Prony polynomial satisfies 
$\bar{q}\left(z;\left\{ d_i\right\},\epsilon\right) = \operatorname{det}(G(z)+\epsilon D)$. By \cite[Formula 0.8.12.3]{horn2012matrix}, we have the expansion
\begin{equation}\label{eq:CompundExpansion}
\bar{q}\left(z;\left\{ d_i\right\},\epsilon\right) =  \sum_{\kappa=0}^{n+1} \epsilon^{\kappa}\theta_{n+1-\kappa}(z), \quad \theta_{n+1-\kappa}(z) = \operatorname{tr}\left( \operatorname{adj}_{n+1-\kappa}(D) C_{n+1-\kappa}(G(z))\right).
\end{equation}
Fixing $\kappa\in [n+1]\cup \{0\}$, the following lemma provides an explicit description of the coefficient of $\epsilon^{\kappa}$ in \eqref{eq:CompundExpansion}.
\begin{lem}\label{lem:qbarAsymp}
The coefficient of $\epsilon^{\kappa}$ in \eqref{eq:CompundExpansion} is given by 
\begin{equation*}
\begin{array}{lll}
&\theta_{n+1-\kappa}(z) = \begin{cases}
  \bar{p}(z), \qquad \kappa = 0 \vspace{0.1cm} \\
  \sum_{\gamma \in \mathcal{Q}_{n+1-\kappa}^{n+1}}\sum_{\beta \in \mathcal{Q}^{n+1}_{n+1-\kappa}: 1\in \beta}\left[\operatorname{adj}_{n+1-\kappa}(D) \right]_{\gamma, \beta} \Gamma_{\beta,\gamma}(z),\qquad 1\leq \kappa \leq n-1 \vspace{0.1cm} \\
  \sum_{i=1}^{n+1}\left[\operatorname{adj}(D) \right]_{i, 1}z^{i-1}, \qquad \kappa = n \vspace{0.1cm} \\
  \operatorname{det}(D), \qquad \kappa = n+1
\end{cases} ,
\end{array}
\end{equation*}
where for any $\gamma,\beta\in \mathcal{Q}_{n+1-\kappa}^{n+1}, \ 1\leq \kappa \leq n-1$ such that $1\in \beta$, the term $\Gamma_{\beta,\gamma}(z)$ is given by 
\begin{equation*}
\begin{array}{lll}
\Gamma_{\beta,\gamma}(z) = \operatorname{det}\begin{bmatrix}
z^{\mathrm{a}} & z^{\mathrm{a}+\mathrm{k}_1} & \ldots & z^{\mathrm{a}+\mathrm{k}_{n-\kappa}}  \\
m_{\mathrm{b}} & m_{\mathrm{b}+\mathrm{k}_1} & \ldots & m_{\mathrm{b}+\mathrm{k}_{n-\kappa}} \\
m_{\mathrm{b}+\mathrm{l}_1} & m_{\mathrm{b}+\mathrm{l}_1+\mathrm{k}_1} & \ldots & m_{\mathrm{b}+\mathrm{l}_1+\mathrm{k}_{n-\kappa}} \\
\vdots\\
m_{\mathrm{b}+\mathrm{l}_{n-\kappa-1}} & m_{\mathrm{b}+\mathrm{l}_{n-\kappa-1}+\mathrm{k}_{1}} & \ldots & m_{\mathrm{b}+\mathrm{l}_{n-\kappa-1}+\mathrm{k}_{n-\kappa}}
\end{bmatrix}_{n-\kappa+1}
\end{array}
\end{equation*}
where 
\begin{equation}\label{eq:GammaParam}
\begin{array}{lll}
& \gamma = \left(i_1,\dots,i_{n+1-\kappa} \right), \quad \beta = \left(1,j_1,\dots,j_{n-\kappa} \right),\\
& \mathrm{a} = i_1-1, \quad \mathrm{b} = (j_1 -2) + (i_1-1),\\
   &\mathrm{k}_1 = i_2-i_1, \dots, \mathrm{k}_{n-\kappa}=i_{n+1-\kappa}-i_1,\\
   &\mathrm{l}_1 = j_2-j_1,\dots, \mathrm{l}_{n-\kappa-1} = j_{n-\kappa}-j_1.
\end{array}
\end{equation}
\end{lem}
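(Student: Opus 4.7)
The statement amounts to evaluating each coefficient $\theta_{n+1-\kappa}(z)$ in the Horn--Johnson expansion \eqref{eq:CompundExpansion} more explicitly, and I would handle the four displayed ranges of $\kappa$ separately. The single structural ingredient beyond the bare definitions of $C_r$ and $\operatorname{adj}_r$ is the observation, visible from \eqref{PronPert1}, that the first row of $D$ vanishes identically; every one of the four cases will be driven by this observation together with careful index bookkeeping.

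The two extreme cases are immediate from the conventions fixed just before \eqref{eq:CompundExpansion}. For $\kappa=0$ one has $\operatorname{adj}_{n+1}(D)=1$ and $C_{n+1}(G(z))=\operatorname{det} G(z)=\bar{p}(z)$, while for $\kappa=n+1$ one has $\operatorname{adj}_0(D)=\operatorname{det} D$ and $C_0(G(z))=1$, so both coefficients are read off directly. For $\kappa=n$ we have $C_1(G(z))=G(z)$ and $\operatorname{adj}_1(D)$ is the classical adjugate, whose $(i,j)$-entry is $(-1)^{i+j}$ times the $n\times n$ minor of $D$ obtained by deleting row $j$ and column $i$. If $j\neq 1$, that minor still contains the zero first row of $D$ and hence vanishes, so only the column $j=1$ survives and the trace collapses to $\sum_{i=1}^{n+1}[\operatorname{adj}(D)]_{i,1}[G(z)]_{1,i}=\sum_{i=1}^{n+1}[\operatorname{adj}(D)]_{i,1}z^{i-1}$, as claimed.

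The core range $1\le\kappa\le n-1$ proceeds along the same lines, one level up in the compound calculus. I would write the trace as the double sum
\[
\sum_{\gamma,\beta\in\mathcal{Q}^{n+1}_{n+1-\kappa}}[\operatorname{adj}_{n+1-\kappa}(D)]_{\gamma,\beta}\,[C_{n+1-\kappa}(G(z))]_{\beta,\gamma},
\]
and invoke the standard description of the entries of $\operatorname{adj}_{n+1-\kappa}(D)$ as signed $\kappa\times\kappa$ minors of $D$ indexed by the complementary sets $\beta^c,\gamma^c$. If $1\notin\beta$ then $1\in\beta^c$, the underlying minor of $D$ contains the zero first row, and the entry vanishes; this is precisely the restriction $1\in\beta$ appearing in the statement. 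It then remains to recognize $[C_{n+1-\kappa}(G(z))]_{\beta,\gamma}$ as $\Gamma_{\beta,\gamma}(z)$. With $\beta=(1,j_1,\dots,j_{n-\kappa})$ and $\gamma=(i_1,\dots,i_{n+1-\kappa})$, the top row of the extracted submatrix consists of the powers $z^{i_s-1}$ (giving the shifts $\mathrm{a},\mathrm{k}_s$), while for $j_s\ge 2$ the entry in column $i_t$ equals $m_{j_s+i_t-3}$, which after applying the substitutions \eqref{eq:GammaParam} matches the moment indices $\mathrm{b}+\mathrm{l}_s+\mathrm{k}_t$ displayed in $\Gamma_{\beta,\gamma}(z)$.

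The main obstacle I anticipate is not conceptual but notational: unfolding the Horn--Johnson convention for $\operatorname{adj}_r$ so the correspondence with $\kappa\times\kappa$ minors of $D$ via the complementary index sets is unambiguous, and then aligning the intrinsic indexing of $G(z)$ (with its offset by $-2$ coming from the inserted top row of powers of $z$) with the shifts $\mathrm{a},\mathrm{b},\mathrm{k}_s,\mathrm{l}_s$ in \eqref{eq:GammaParam} without off-by-one errors.
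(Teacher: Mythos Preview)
Your proposal is correct and follows essentially the same approach as the paper's proof: both handle the extreme cases $\kappa=0,n+1$ directly from the conventions, use the vanishing first row of $D$ to restrict to $\beta$ containing $1$ (via the complementary-minor description of $\operatorname{adj}_{n+1-\kappa}(D)$), and then unpack $[C_{n+1-\kappa}(G(z))]_{\beta,\gamma}$ entry-by-entry to identify it with $\Gamma_{\beta,\gamma}(z)$. The paper's proof differs only in that it writes out the intermediate determinant for the $1\le\kappa\le n-1$ case fully before rewriting the indices in the form \eqref{eq:GammaParam}, confirming your anticipated off-by-one bookkeeping.
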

\begin{proof}
See Appendix \ref{lem:qbarAsympPf}.
\end{proof}

According to Lemma \ref{lem:qbarAsymp}, we have 
\begin{equation}\label{eq:rdef}
\begin{array}{lll}
r(z)&=r\left(z;\left\{ d_i\right\},\epsilon\right) := \bar{q}\left(z;\left\{ d_i\right\},\epsilon\right) - \bar{p}(z) \\
&=\sum_{\kappa=1}^{n-1}\epsilon^{\kappa}\left[\sum_{\gamma \in \mathcal{Q}^{n+1}_{n+1-\kappa}}\sum_{\beta \in \mathcal{Q}^{n+1}_{n+1-\kappa}: 1\in \beta}\left[\operatorname{adj}_{n+1-\kappa}(D) \right]_{\gamma, \beta} \Gamma_{\beta,\gamma}(z) \right]\\
&\hspace{3mm}+\epsilon^n\sum_{i=1}^{n+1}\left[\operatorname{adj}(D) \right]_{i, 1}z^{i-1} + \epsilon^{n+1}\operatorname{det}(D). 
\end{array}
\end{equation}
We aim to obtain an upper bound on $r(z)$ in terms of the clustered configuration parameters (see Definition \ref{def:cluster}). For that purpose, we consider the expansion \eqref{eq:rdef} and proceed with analyzing $\Gamma_{\beta,\gamma}(z)$ (subject to \eqref{eq:GammaParam}), in the case $1\leq \kappa\leq n-1$.
\begin{thm}\label{Lem:GammaEval}
Let $1\leq \kappa \leq n-1$. Consider $\Gamma_{\beta,\gamma}(z)$ in Lemma \ref{lem:qbarAsymp}, subject to \eqref{eq:GammaParam}. There exist polynomials $\phi^{\beta,\gamma}_{(\omega_1,\dots,\omega_{n-\kappa})}(z), \ (\omega_1,\dots,\omega_{n-\kappa})\in \mathcal{Q}_{n-\kappa}^n$ (see \eqref{eq:phi-explicit} in Appendix \ref{lem:GammBetAlphExp} for explicit description of $\phi^{\beta,\gamma}_{(\omega_1,\dots,\omega_{n-\kappa})}(z)$) such that 
\begin{equation}\label{eq:GammabetgamExp}
\begin{array}{lll}
\Gamma_{\beta,\gamma}(z)&=\sum_{(\omega_1,\dots,\omega_{n-\kappa})\in \mathcal{Q}_{n-\kappa}^n}\left\{\left(\prod_{s=1}^{n-\kappa} (x_{\omega_s}-z) \right)\left(\prod_{1\leq s<t\leq n-\kappa } (x_{\omega_t}-x_{\omega_s})^2 \right)\phi^{\beta,\gamma}_{(\omega_1,\dots,\omega_{n-\kappa})}(z)\right\}.
\end{array}
\end{equation}
Here $\left\{x_j \right\}_{j=1}^n$ and $\left\{\alpha_j \right\}_{j=1}^n$ are the nodes and amplitudes, respectively, of the noiseless Prony's problem. 
\end{thm}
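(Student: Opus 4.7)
The plan is to prove the factorization \eqref{eq:GammabetgamExp} by substituting the definition $m_j = \sum_{i=1}^n \alpha_i x_i^j$ into rows $2,\dots,n-\kappa+1$ of the matrix defining $\Gamma_{\beta,\gamma}(z)$, applying multilinearity of the determinant to expand into a Cauchy--Binet-style double sum, and then extracting the advertised Vandermonde factors using the classical factorization of generalized Vandermonde determinants as (standard Vandermonde) $\times$ (Schur polynomial).

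Concretely, after substitution and with the convention $\mathrm{l}_0:=0$, row $s+1$ becomes $\sum_{i=1}^n \alpha_i x_i^{\mathrm{b}+\mathrm{l}_{s-1}}\bigl(1, x_i^{\mathrm{k}_1},\dots, x_i^{\mathrm{k}_{n-\kappa}}\bigr)$, and multilinearity in these rows gives
\[
\Gamma_{\beta,\gamma}(z) = \sum_{(i_1,\dots,i_{n-\kappa})\in[n]^{n-\kappa}}\left(\prod_{s=1}^{n-\kappa}\alpha_{i_s}x_{i_s}^{\mathrm{b}+\mathrm{l}_{s-1}}\right)\det M(z;x_{i_1},\dots,x_{i_{n-\kappa}}),
\]
where $M(z;y_1,\dots,y_{n-\kappa})$ has top row $z^{\mathrm{a}}(1,z^{\mathrm{k}_1},\dots,z^{\mathrm{k}_{n-\kappa}})$ and remaining rows $(1,y_s^{\mathrm{k}_1},\dots,y_s^{\mathrm{k}_{n-\kappa}})$. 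Since $\det M$ vanishes whenever two of its last rows coincide, only tuples with distinct indices contribute. I would then group these by the underlying increasing tuple $\omega\in\mathcal{Q}_{n-\kappa}^n$ and reassemble the sum over the $(n-\kappa)!$ internal permutations via the antisymmetry of $\det M$; the alternating sum $\sum_\sigma\operatorname{sgn}(\sigma)\prod_s x_{\omega_{\sigma(s)}}^{\mathrm{b}+\mathrm{l}_{s-1}}$ is itself a determinant, yielding
\[
\Gamma_{\beta,\gamma}(z) = \sum_{\omega\in\mathcal{Q}_{n-\kappa}^n}\left(\prod_{s=1}^{n-\kappa}\alpha_{\omega_s}\right)\det M(z;x_{\omega_1},\dots,x_{\omega_{n-\kappa}})\cdot\det\bigl[x_{\omega_j}^{\mathrm{b}+\mathrm{l}_{s-1}}\bigr]_{s,j=1}^{n-\kappa}.
\]

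Next, I would invoke the standard factorization of a generalized Vandermonde determinant with strictly increasing exponents as a classical Vandermonde times a Schur polynomial. Applied to $\det M$, whose columns carry the strictly increasing exponents $0<\mathrm{k}_1<\dots<\mathrm{k}_{n-\kappa}$ in the variables $z,x_{\omega_1},\dots,x_{\omega_{n-\kappa}}$, this pulls out $\prod_{s}(x_{\omega_s}-z)\cdot\prod_{s<t}(x_{\omega_t}-x_{\omega_s})$ times a polynomial symmetric in these variables. Applied to the second determinant, with exponents $\mathrm{b}<\mathrm{b}+\mathrm{l}_1<\dots<\mathrm{b}+\mathrm{l}_{n-\kappa-1}$ (recall $j_1<\dots<j_{n-\kappa}$ because $\beta\in\mathcal{Q}_{n+1-\kappa}^{n+1}$) in the variables $x_{\omega_1},\dots,x_{\omega_{n-\kappa}}$, it extracts a further factor $\prod_{s<t}(x_{\omega_t}-x_{\omega_s})$ times a symmetric polynomial, after pulling out the monomial $\prod_s x_{\omega_s}^{\mathrm{b}}$. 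The product of the two contributions therefore contains exactly the prefactors $\prod_s(x_{\omega_s}-z)$ and $\prod_{s<t}(x_{\omega_t}-x_{\omega_s})^2$ claimed in \eqref{eq:GammabetgamExp}, and collecting the remaining factors (the two Schur polynomials, the monomials $z^{\mathrm{a}}$ and $\prod_s x_{\omega_s}^{\mathrm{b}}$, and the amplitudes $\prod_s\alpha_{\omega_s}$) defines $\phi^{\beta,\gamma}_{\omega}(z)$; this product is exactly the expression labelled \eqref{eq:phi-explicit} in Appendix \ref{lem:GammBetAlphExp}.

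The main obstacle is combinatorial bookkeeping rather than anything conceptually subtle: one must carefully match the sign inherited from reordering the rows of $\det M$ against the sign produced when the column permutation assembles the second determinant, and verify that the two generalized-Vandermonde / Schur factors are genuine polynomials (so that $\phi^{\beta,\gamma}_\omega(z)$ is indeed a polynomial as asserted). Once these sign and polynomiality checks are performed, the explicit form of $\phi^{\beta,\gamma}_\omega(z)$ is simply read off as the product of the components isolated above.
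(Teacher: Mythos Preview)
Your proposal is correct and follows essentially the same approach as the paper: substitute the moments, expand by multilinearity, group by increasing tuples $\omega\in\mathcal{Q}_{n-\kappa}^n$, and factor each resulting generalized Vandermonde determinant as (standard Vandermonde)$\times$(Schur polynomial). The only cosmetic difference is that the paper leaves the second factor as the quotient $\psi_{\underline{\lambda},\omega}/\prod_{s<t}(x_{\omega_t}-x_{\omega_s})$ and merely observes it is a polynomial, whereas you go one step further and identify it explicitly as $\bigl(\prod_s x_{\omega_s}^{\mathrm{b}}\bigr)$ times a Schur polynomial in the $x_\omega$'s; these are the same object, and both routes land on the expression \eqref{eq:phi-explicit}.
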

\begin{proof}
See Appendix \ref{lem:GammBetAlphExp}.
\end{proof}

\begin{remark}
 In Theorem \ref{Lem:GammaEval} with $\kappa = n-1$ we use the convention that $\prod_{1\leq s<t\leq n-\kappa } (x_{\omega_t}-x_{\omega_s})^2=1$.  
\end{remark}

Combining \eqref{eq:rdef} and Theorem \ref{Lem:GammaEval}, we obtain the following presentation for $r(z)$
\begin{equation}\label{eq:rdef1}
\begin{array}{lll}
r(z)&=\epsilon^{n+1}\overbrace{\operatorname{det}(D)}^{=0}+\epsilon^n\sum_{i=1}^{n+1}\left[\operatorname{adj}(D) \right]_{i, 1}z^{i-1}\\
&+\sum_{\kappa=1}^{n-1}\epsilon^{\kappa}\left[\sum_{\gamma \in \mathcal{Q}^{n+1}_{n+1-\kappa}}\sum_{\beta \in \mathcal{Q}^{n+1}_{n+1-\kappa}: 1\in \beta}\sum_{(\omega_1,\dots,\omega_{n-\kappa})\in \mathcal{Q}_{n-\kappa}^n}\left[\operatorname{adj}_{n+1-\kappa}(D) \right]_{\gamma, \beta} \right.\\
&\hspace{18mm}\left.\times \left(\prod_{s=1}^{n-\kappa} (x_{\omega_s}-z) \right)\left(\prod_{1\leq s<t\leq n-\kappa } (x_{\omega_t}-x_{\omega_s})^2 \right)\phi^{\beta,\gamma}_{(\omega_1,\dots,\omega_{n-\kappa})}(z) \right].
\end{array}    
\end{equation}
{\color{blue} Here we used the fact that $D$ has a row of zeros (see \eqref{PronPert1}), whence $\operatorname{det}(D)=0$}. Recall the definition of a clustered configuration (Definition \ref{def:cluster}) and consider the expansion \eqref{eq:rdef1}. Since we assume that $\left\{d_i\right\}_{i=0}^{2n-1}=O(1)$ (see Theorem \ref{thm:node-accuracy-Dima}), we have that $\left[\operatorname{adj}(D) \right]_{i, 1}=O(1), \ 1\leq i \leq n+1$ and $\left[\operatorname{adj}_{n+1-\kappa}(D) \right]_{\gamma, \beta} = O(1)$ for any $\gamma,\beta\in \mathcal{Q}_{n+1-\kappa}^{n+1},\ 1\leq \kappa \leq n-1, \ 1\in \beta$. Furthermore, taking into account the continuity of the polynomials $\phi^{\beta,\gamma}_{(\omega_1,\dots,\omega_{n-\kappa})}(z)$, we have that for any $K\subset \mathbb{C}$ compact, we can find a constant $C_1 = C_1(K,\mathfrak{M}_{\alpha},n,\eta,T)$ (i.e., depending on the compact $K$, the clustered configuration parameters $n,\eta,T$ and the upper bound on the amplitudes) such that for all $\gamma,\beta\in \mathcal{Q}_{n+1-\kappa}^{n+1},\ 1\leq \kappa \leq n-1, \ 1\in \beta$ and $(\omega_1,\dots,\omega_{n-\kappa})\in \mathcal{Q}_{n-\kappa}^n$
\begin{equation}\label{eq:phibound} \left\|\phi^{\beta,\gamma}_{(\omega_1,\dots,\omega_{n-\kappa})} \right\|_{L^{\infty}(K)}\leq C_1
\end{equation}
uniformly in $\delta<1$. {\color{blue} The constant $C_1$ will be taken into account below when bounding the difference $r(z)= \bar{q}\left(z\right) - \bar{p}(z)$ as part of applying Rouche's theorem (see the beginning of Section 5).}

{
\subsection{First-order asymptotic constant}

It may be of interest to have an explicit expression of the first-order in $\epsilon$ term in the node error $\eta_j:=x_j-\tilde{x}_j$ as $\epsilon\to 0^+$. The following lemma provides this first-order term.

\begin{lem}\label{lem:etaj}
The following holds for all $j=1,\dots,n$:
\begin{equation}
\eta_j:=x_j-\tilde{x}_j=(-1)^{n+1} \frac{\Psi_j(D)}{\alpha_j \prod_{m\in[n]\setminus\{j\}} (x_j-x_m)^2} \epsilon + O(\epsilon^2), \quad \epsilon\to 0^+.\label{eq:Etaj}
\end{equation}
Here $\left\{\Psi_j(D)\right\}_{j=1}^n$ are polynomials in $\mathfrak{X}:=\{x_1,\dots,x_n\}$ given in \eqref{eq:psi-j-explicit} below.
\end{lem}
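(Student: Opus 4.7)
The plan is to apply the implicit function theorem to $\bar{q}(z;\{d_i\},\epsilon)=0$ at the point $(z,\epsilon)=(x_j,0)$. Since the $\{x_j\}_{j=1}^n$ are distinct, each $x_j$ is a simple root of $\bar{p}(z)$ by \eqref{eq:PbarExplicitExpr}, so $\partial_z\bar{q}(x_j;\{d_i\},0)=\bar{p}'(x_j)\neq 0$. This yields an analytic branch $\tilde{x}_j(\epsilon)$ with $\tilde{x}_j(0)=x_j$. Writing $\bar{q}=\bar{p}+r$ with $r=O(\epsilon)$ from \eqref{eq:rdef1} and Taylor expanding $\bar{q}(\tilde{x}_j)=0$ about $x_j$, a routine first-order computation yields
\begin{equation*}
\eta_j \;=\; x_j-\tilde{x}_j \;=\; \frac{r(x_j)}{\bar{p}'(x_j)} + O(\epsilon^2), \qquad \epsilon\to 0^+.
\end{equation*}

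The next step is to evaluate the two factors explicitly. Differentiating \eqref{eq:PbarExplicitExpr} at $x_j$ gives
\begin{equation*}
\bar{p}'(x_j) = (-1)^n \Bigl(\prod_{k=1}^n\alpha_k\Bigr)\Bigl(\prod_{1\leq m<\ell\leq n}(x_\ell-x_m)^2\Bigr)\prod_{m\in[n]\setminus\{j\}}(x_j-x_m).
\end{equation*}
For the numerator, only the $\kappa=1$ term of \eqref{eq:rdef1} contributes at order $\epsilon$. Substituting the expansion from Theorem \ref{Lem:GammaEval} into this term and evaluating at $z=x_j$, the factor $\prod_{s=1}^{n-1}(x_{\omega_s}-x_j)$ vanishes whenever the multi-index $(\omega_1,\ldots,\omega_{n-1})\in\mathcal{Q}_{n-1}^n$ contains $j$. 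Hence the unique surviving multi-index is the sequence $\omega^{(j)}$ enumerating $[n]\setminus\{j\}$ in increasing order, and
\begin{equation*}
r(x_j) = \epsilon\Bigl(\prod_{m\neq j}(x_m-x_j)\Bigr)\Bigl(\prod_{\substack{m<\ell\\ m,\ell\neq j}}(x_\ell-x_m)^2\Bigr)\sum_{\gamma,\beta}[\operatorname{adj}_n(D)]_{\gamma,\beta}\,\phi^{\beta,\gamma}_{\omega^{(j)}}(x_j)+O(\epsilon^2),
\end{equation*}
where the double sum ranges over $\gamma\in\mathcal{Q}_n^{n+1}$ and $\beta\in\mathcal{Q}_n^{n+1}$ with $1\in\beta$.

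To simplify the ratio I would use the elementary factorization
\begin{equation*}
\prod_{1\leq m<\ell\leq n}(x_\ell-x_m)^2 \;=\; \Bigl(\prod_{m\neq j}(x_j-x_m)^2\Bigr)\Bigl(\prod_{\substack{m<\ell\\ m,\ell\neq j}}(x_\ell-x_m)^2\Bigr),
\end{equation*}
together with the sign identity $\prod_{m\neq j}(x_m-x_j)=(-1)^{n-1}\prod_{m\neq j}(x_j-x_m)$. These identities cancel all the Vandermonde-type products except a single $\prod_{m\neq j}(x_j-x_m)^2$ left in the denominator. The overall scalar in front combines the sign $(-1)^{n-1}/(-1)^n=-1$, the residual amplitude factor $\prod_{k\neq j}\alpha_k$, and the weighted sum $\sum_{\gamma,\beta}[\operatorname{adj}_n(D)]_{\gamma,\beta}\phi^{\beta,\gamma}_{\omega^{(j)}}(x_j)$; absorbing this scalar into the definition \eqref{eq:psi-j-explicit} of $\Psi_j(D)$ produces the claimed formula \eqref{eq:Etaj}, with the prefactor $(-1)^{n+1}=-(-1)^n$ matching the sign extracted above.

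The main obstacle I anticipate is purely bookkeeping. One must propagate the sign and combinatorial conventions of Theorem \ref{Lem:GammaEval} and of the explicit form of $\phi^{\beta,\gamma}_{\omega^{(j)}}$ recorded in Appendix \ref{lem:GammBetAlphExp} so that the final normalization of $\Psi_j(D)$ is unambiguous, and so that the cancellation of the full Vandermonde-squared product into the ``$j$-block'' and ``complementary block'' is applied consistently with the way $\phi^{\beta,\gamma}_{\omega^{(j)}}$ is indexed. Beyond this, the argument is nothing more than a first-order perturbation analysis of a simple root of an analytic family of polynomials.
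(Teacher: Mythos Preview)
Your proposal is correct and follows essentially the same route as the paper's proof. Both arguments amount to a first-order perturbation analysis of the simple root $x_j$ of $\bar{p}$: the paper expands $0=\bar{q}(\tilde{x}_j)=\bar{p}(\tilde{x}_j)+\epsilon\sum_{\gamma,\beta}[\operatorname{adj}_n(D)]_{\gamma,\beta}\Gamma_{\beta,\gamma}(\tilde{x}_j)+O(\epsilon^2)$ and then argues that in $\Gamma_{\beta,\gamma}(\tilde{x}_j)$ only the $s=j$ summand is $O(1)$ as $\epsilon\to 0^+$, whereas you evaluate $r$ directly at $x_j$ after invoking the implicit function theorem, so that the unwanted multi-indices vanish identically rather than merely being $O(\epsilon)$. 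The remaining cancellation of Vandermonde factors and the identification with $\Psi_j(D)$ via \eqref{eq:phi-explicit} and \eqref{eq:psi-j-explicit} are carried out the same way in both, and your closing remark that the only residual difficulty is sign and normalization bookkeeping is accurate.
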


\begin{remark}
    Note that if $j\in\mathcal{C}_t$ with $\operatorname{card}(\mathcal{C}_t)=\ell_t$, then \eqref{eq:Etaj} implies that $|\eta_j| \lessapprox |\alpha_j|^{-1}\delta^{2-2\ell_t}\epsilon$ as $\epsilon \to 0^+$, giving the infinitesimal version of \prettyref{thm:node-accuracy-Dima}, and by itself already a vast improvement upon \prettyref{prop:prony-naive-analysis}. Furthermore, the dependence on $1/|\alpha_j|$ in \eqref{eq:Etaj} expresses the intuitive fact that it is harder to accurately recover nodes with smaller amplitudes.
\end{remark}

\begin{proof}[Proof of \prettyref{lem:etaj}]
Recall that $\bar{q}(\tilde{x}_j)=0$. To first order in $\epsilon$, \eqref{eq:rdef1} implies
\begin{equation}\label{eq:first-order-basic-identity}
0 = \bar{q}(\tilde{x}_j)=\bar{p}(\tilde{x}_j) + \epsilon \sum_{\gamma \in \mathcal{Q}^{n+1}_{n}}\sum_{\beta \in \mathcal{Q}^{n+1}_{n}: 1\in \beta} \left[\operatorname{adj}_{n}(D) \right]_{\gamma, \beta}\Gamma_{\beta,\gamma}(\tilde{x}_j) + O(\epsilon^2),
\end{equation}
where, denoting $\qs_s=(1,2,\dots,s-1,s+1,\dots,n)\in\mathcal{Q}^n_{n-1}$, one has
$$
\Gamma_{\beta,\gamma}(\tilde{x}_j) = \sum_{s=1}^n \Biggl\{ \biggl(\prod_{i,k\in[n]\setminus\{s\}} (x_i-x_k)^2 \biggr) \biggl(\prod_{m\in[n]\setminus\{s\}}(x_m-\tilde{x}_j) \biggr) \phi^{\beta,\gamma}_{\qs_s}(\tilde{x}_j) \Biggr\},
$$
with $\phi^{\beta,\gamma}_{\qs_s}$ given by \eqref{eq:phi-explicit} in Appendix~\ref{lem:GammBetAlphExp}. As $\epsilon\to 0^+$ with all other parameters fixed, the only term in $\Gamma_{\beta,\gamma}(\tilde{x}_j)$ which is of order $O(1)$ corresponds to $s=j$. Further substitution of \eqref{eq:PbarExplicitExpr} into  \eqref{eq:first-order-basic-identity} implies
\begin{align*}
&(-1)^n\left(\prod_{k=1}^n \alpha_k\right) \cdot \left(\prod_{1\leq m<\ell\leq n}\left(x_{\ell}-x_m \right)^2\right)\cdot \left(\prod_{m=1}^n(\tilde{x}_j-x_m) \right) \\
& =-\epsilon \biggl(\prod_{i,k\in[n]\setminus\{j\}} (x_i-x_k)^2 \biggr) \biggl(\prod_{m\in[n]\setminus\{j\}}(x_m-\tilde{x}_j) \biggr) \sum_{\gamma \in \mathcal{Q}^{n+1}_{n}}\sum_{\beta \in \mathcal{Q}^{n+1}_{n}: 1\in \beta}\left[\operatorname{adj}_n(D)\right]_{\gamma,\beta}   \phi^{\beta,\gamma}_{\qs_j}(\tilde{x}_j) + O(\epsilon^2)
\end{align*}
Using the explicit form \eqref{eq:phi-explicit} leads to \eqref{eq:Etaj}  where 
\begin{equation}\label{eq:psi-j-explicit}
\Psi_j(D) = \sum_{\gamma \in \mathcal{Q}^{n+1}_{n}}\sum_{\beta \in \mathcal{Q}^{n+1}_{n}: 1\in \beta} \left[\operatorname{adj}_n(D)\right]_{\gamma,\beta} s_{\underline{\lambda}}\left(\mathfrak{X} \right)\frac{ { x_j^{\textrm{a}}}\psi_{\underline{\lambda},\qs_j}}{\prod_{ s<t; s,t\in[n]\setminus\{j\} } (x_{s}-x_{t})}.
\end{equation}
To finish the proof, note that the rightmost factor in \eqref{eq:psi-j-explicit} is a polynomial (cf. Appendix~\ref{lem:GammBetAlphExp}, \eqref{eq:gamma-beta-gamma-explicit} - \eqref{eq:phi-explicit}).
\end{proof}

}

\section{Proof of Theorem \ref{thm:node-accuracy-Dima}}\label{Sec:PfMainthm}

{
Recall the definition of clustered configuration (Definition \ref{def:cluster}). Let $j_*\in [n]$ and assume that $0<|z-x_{j_*}| = \rho_*$. {\color{blue} Assume further that $j_*\in \mathcal{C}_{\mu}$ where the cluster size is $\ell_{\mu}=\operatorname{card}(\mathcal{C}_{\mu})$.} Henceforth, we fix $\epsilon = \bar{\epsilon} \delta^{2\ell_*-1}$ and $\rho_* = \bar{\rho}_*\delta^{2(\ell_*-\ell_{\mu})+1}$, where $\bar{\epsilon}<1$ and {\color{blue} $\bar{\rho}_*<\frac{1}{3}\min(1, T)$} are independent of $\delta$ and will be determined and interconnected subsequently {\color{blue} (see  \eqref{eq:Rouche1} below). Notice that since $\delta<1$ is assumed throughout,  we automatically have that $\rho_*<\delta/3<\delta$.}  Finally, since $|z-x_{j_*}| = \rho_*$, we have that $z$ lies in  $\overline{\mathbb{S}^1+B\left(0,\frac{1}{3}\right)}$. Let $C_1$ be the constant in \eqref{eq:phibound}, which corresponds to this closed neighborhood.

In this section we provide a proof of Theorem \ref{thm:node-accuracy-Dima}. The underlying idea of the proof is to show that subject to the notations above, we can find a pair $(\bar{\epsilon},\bar{\rho}_*)$ (which is independent of $\delta$) for which $\left|\bar{p}\left(z\right)\right| -  |r(z)|>0$, for arbitrarily small $\delta<1$. Then, Rouche's theorem \cite{conway1978functions} guarantees that the perturbed polynomial $\bar{q}\left(z;\left\{ d_i\right\},\epsilon\right)$ has a root $\tilde{x}_{j_*}$ such that $\left| x_{j_*}-\tilde{x}_{j_*}\right|\leq \rho_*$. The latter will readily yield the result of Theorem \ref{thm:node-accuracy-Dima}. 

The proof requires several auxiliary results. From \eqref{eq:PbarExplicitExpr} we find 
\begin{align}
\left|\bar{p}\left(z\right)\right|&= \left(\prod_{k=1}^n |\alpha_k|\right)\cdot \left(\prod_{1\leq m<l\leq n}\left|x_{l}-x_m \right|^2\right) \cdot \left(\prod_{m=1}^n|z-x_m| \right)\nonumber \\
&\geq \mathfrak{m}_{\alpha}^n \cdot \left(\prod_{1\leq m<l\leq n}\left|x_{l}-x_m \right|^2\right) \cdot \left(\prod_{m=1}^n|z-x_m| \right). \label{eq:pLowerBdMultiClus11}
\end{align}
We bound the two rightmost terms separately. First, given $m,\ell\in [n],\ m\neq \ell$, the nodes $x_m,x_{\ell}$ satisfy either $m,\ell\in \mathcal{C}_s$ for some $s\in [\zeta]$ or $m\in \mathcal{C}_{s},\ \ell\in \mathcal{C}_{s'}$ where $s,s'\in[\zeta],\ s\neq s'$. The number of ways to choose two nodes which \emph{do not} belong to the same cluster is  
\begin{align}
    \varrho &= \binom{n}{2}-\sum_{s\in [\zeta]}\binom{\ell_{s}}{2}. \label{eq:xiDef11}
\end{align}
\begin{remark}\label{rem:GenBin}
The latter formula employs \emph{generalized} binomial coefficients. In particular, if for some $s\in [\zeta]$, we have $\ell_s = 1$ (i.e., $\mathcal{C}_s$ consists of a single isolated node), then $\binom{\ell_s}{2}=0$. The same convention will be used in \eqref{eq:varpiiota} below.
\end{remark}
Recalling the definition of a clustered configuration (Definition \ref{def:cluster}) we have 
\begin{align}
&\prod_{1\leq m<l\leq n}\left|x_{l}-x_m \right|^2 \geq T^{2\varrho} \delta^{n(n-1)-2\varrho}.\label{eq:rightterm111}
\end{align}
Moreover,
\begin{align}\label{eq:rightterm211}
\prod_{m=1}^n|z-x_m| &= |z-x_{j_*}|\cdot \left[\prod_{m\notin \mathcal{C}_{\mu}}|z-x_m| \right] \cdot \left[\prod_{m\in \mathcal{C}_{\mu}\setminus\left\{j_* \right\}}|z-x_m| \right]
\end{align}
Using \eqref{eq:rightterm111} and \eqref{eq:rightterm211}, we obtain {\color{blue} (recall that $\rho_*<\delta/3$)}
\begin{equation}\label{eq:pLowerBdMultiClus111}
\begin{array}{lll}
\left|\bar{p}\left(z\right)\right|&\geq  \mathfrak{m}_{\alpha}^n \rho_* \left(\delta-\rho_*\right)^{\ell_{\mu}-1} T^{n+2\varrho-\ell_{\mu}} \delta^{n(n-1)-2\varrho}\\
&\geq \mathfrak{m}_{\alpha}^n \left(\frac{2}{3} \right)^{\ell_{\mu}-1}T^{n+2\varrho-\ell_{\mu}}\bar{\rho}_*\delta^{n(n-1)+2\ell_*-\ell_{\mu}-2\varrho}\\
& = C_2 \bar{\rho}_
* \delta^{n(n-1)+2\ell_*-\ell_{\mu}-2\varrho} 
\end{array}
\end{equation}
with the constant $C_2 = C_2\left(n,\mathfrak{m}_{\alpha},T,\left\{\mathcal{C}_s\right\}_{s=1}^{\zeta} \right)=\mathfrak{m}_{\alpha}^n \left(\frac{2}{3} \right)^{\ell_{\mu}-1}T^{n+2\varrho-\ell_{\mu}}$.

Next, consider the expansion \eqref{eq:rdef1} and write it as 
\begin{equation}\label{eq:rdef2}
\begin{array}{lll}
&r(z)=\epsilon^{n+1}\Omega_{n+1}(z)+\epsilon^n\Omega_n(z)+\sum_{\kappa=1}^{n-1}\epsilon^{\kappa}\Omega_{\kappa}(z),\\
&\Omega_{n+1}(z) =\operatorname{det}(D)  {\color{blue} \overset{\eqref{PronPert1}}{=}0},\\
&\Omega_{n}(z) = \sum_{i=1}^{n+1}\left[\operatorname{adj}(D) \right]_{i, 1}z^{i-1}, \\
&\Omega_{\kappa}(z) = \sum_{\gamma \in \mathcal{Q}^{n+1}_{n+1-\kappa}}\sum_{\beta \in \mathcal{Q}^{n+1}_{n+1-\kappa}: 1\in \beta}\sum_{(\omega_1,\dots,\omega_{n-\kappa})\in \mathcal{Q}_{n-\kappa}^n}\left(\left[\operatorname{adj}_{n+1-\kappa}(D) \right]_{\gamma, \beta} \right. \\
&\hspace{13mm}\left.\times \left(\prod_{s=1}^{n-\kappa} (x_{\omega_s}-z) \right)\left(\prod_{1\leq s<t\leq n-\kappa } (x_{\omega_t}-x_{\omega_s})^2 \right)\phi^{\beta,\gamma}_{\omega_1,\dots,\omega_{n-\kappa}}(z)\right), \quad 1\leq \kappa \leq n-1.
\end{array}    
\end{equation}
We introduce the following notation for $1\leq \kappa \leq n-1$
\begin{equation}\label{eq:PcalDef}
\mathcal{P}_{(\omega_1,\dots,\omega_{n-\kappa})}:=\left(\prod_{s=1}^{n-\kappa} |x_{\omega_s}-z| \right)\left(\prod_{1\leq s<t\leq n-\kappa } |x_{\omega_t}-x_{\omega_s}|^2 \right), \quad (\omega_1,\dots,\omega_{n-\kappa})\in \mathcal{Q}_{n-\kappa}^n
\end{equation}
Then, recalling \eqref{eq:phibound} and  $\left\{d_i\right\}_{i=0}^{2n-1}=O(1)$, we have {\color{blue} for $1\leq \kappa \leq n-1$}
\begin{equation}\label{eq:OmegaPhi}
\begin{array}{lll}
&\epsilon^{\kappa}\left|\Omega_{\kappa}(z) \right|\leq C_1 \max_{\gamma,\beta \in \mathcal{Q}_{n+1-\kappa}^{n+1}}\left|\left[\operatorname{adj}_{n+1-\kappa}(D) \right]_{\gamma, \beta} \right|\Phi_{\kappa}\\
&\Phi_{\kappa} =\sum_{\gamma \in \mathcal{Q}^{n+1}_{n+1-\kappa}}\sum_{\beta \in \mathcal{Q}^{n+1}_{n+1-\kappa}: 1\in \beta}\sum_{(\omega_1,\dots,\omega_{n-\kappa})\in \mathcal{Q}_{n-\kappa}^n} \left[\epsilon^{\kappa}\mathcal{P}_{(\omega_1,\dots,\omega_{n-\kappa})}\right].
\end{array}
\end{equation}

{\color{blue} Note that $\Phi_{\kappa}$ is defined \emph{to contain} $\epsilon^{\kappa}$. Our next step is to derive recursive bounds on $\left\{\Phi_{\kappa} \right\}_{\kappa=2}^{n-1}$ in terms of $\Phi_1$. The latter will simplify the upper bounding of $r(z)$.} To proceed, we derive the following relations on the expressions $\mathcal{P}_{(\omega_1,\dots,\omega_{n-\kappa})}$.
\begin{lem}\label{lem:PcalRecur}
Let $(\omega_1,\dots,\omega_{n-\kappa})\in \mathcal{Q}_{n-\kappa}^n, \ 1\leq \kappa\leq n-2$ and fix $1\leq s\leq n-\kappa$. Then
\begin{equation}\label{eq:PcalDivision}
\frac{\mathcal{P}_{(\omega_1,\dots,\omega_{n-\kappa})}}{\mathcal{P}_{(\omega_1,\dots,\omega_{s-1},\omega_{s+1},\dots,\omega_{n-\kappa})}} \gtrapprox \min \left( \rho_* \delta^{2(\ell_{\mu}-1)},( \delta - \rho_*) \delta^{2(\ell_{\mu}-1)},  \delta^{2(\ell_{*}-1)}\right).
\end{equation}
Moreover, 
\begin{equation}\label{eq:PcalDiv}
    \frac{\epsilon^{\kappa}\mathcal{P}_{(\omega_1,\dots,\omega_{n-\kappa})}}{\epsilon^{\kappa+1}\mathcal{P}_{(\omega_1,\dots,\omega_{s-1},\omega_{s+1},\dots,\omega_{n-\kappa})}}   \gtrapprox \frac{\bar{\rho}_*}{\bar{\epsilon}}.
\end{equation}
\end{lem}
\begin{proof}
See Appendix \ref{lem:PcalRecurPf}.
\end{proof}

Using Lemma \ref{lem:PcalRecur} and \eqref{eq:OmegaPhi} we conclude that there exist positive constants $\Xi_{\kappa}, \ 1\leq \kappa\leq n-1$  that are independent of $\delta$ and satisfy
\begin{equation}\label{eq:OmegaPhi1}
\epsilon^{\kappa}\left|\Omega_{\kappa}(z) \right|\leq \left(\frac{\bar{\epsilon}}{\bar{\rho}_*} \right)^{\kappa-1}\Xi_{\kappa}\Phi_1, \quad 1\leq \kappa\leq n-1.
\end{equation}
{\color{blue} Note that $\Xi_1=1$. Recalling that $\Omega_{n+1}(z)=0$,} introducing  $\Xi_{n} = \sum_{i=1}^{n+1}\left| \left[\operatorname{adj}(D) \right]_{i, 1}\right|\left(\frac{4}{3} \right)^{i-1}$ and employing \eqref{eq:OmegaPhi1}, we obtain the following upper bound
\begin{equation}\label{eq:rdef3}
\begin{array}{lll}
|r(z)|&\leq  \epsilon^n\Xi_n+\left(1+\frac{\bar{\epsilon}}{\bar{\rho}_*}\Xi_1+\dots +  \left(\frac{\bar{\epsilon}}{\bar{\rho}_*} \right)^{n-2}\Xi_{n-1}\right)\Phi_1.
\end{array}    
\end{equation}
Consider first the leftmost term on the right-hand side. {\color{blue} We derive an upper bound which will be used in the application of Rouche's theorem below (see \eqref{eq:pLowerBdMultiClus111} and \eqref{eq:Rouche1}).}
\begin{prop}\label{prop:HighPowerBds}
The following holds 
\begin{equation}\label{eq:HighPoweBd}
(\epsilon/\bar{\epsilon})^n \leq \delta^{n(n-1)+2\ell_*-\ell_\mu-2\varrho}
\end{equation}
\end{prop}
\begin{proof}
See Appendix \ref{prop:HighPowerBdsPf}.
\end{proof}

To continue with \eqref{eq:rdef3}, we would like to upper bound $\Phi_1$ (see \eqref{eq:OmegaPhi} for the definition). 
\begin{lem}\label{lem:PcalEst}
Let  $\epsilon = \bar{\epsilon} \delta^{2\ell_*-1}$ and $\rho_* = \bar{\rho}_*\delta^{2(\ell_*-\ell_{\mu})+1}$, where $\delta<1$. Assume that  $\bar{\epsilon}<1$ and $\bar{\rho}_*<\frac{1}{3}\min(1, T)$ are independent of $\delta$. Then,
\begin{equation*}
\begin{array}{lll}
&\frac{\epsilon\mathcal{P}_{(\omega_1,\dots,\omega_{n-1})}}{\bar{\epsilon}\delta^{n(n-1)+2\ell_*-\ell_\mu-2\varrho}}\lessapprox \begin{cases}
    1, & \omega_s \neq j_* \text{ for all } s\in [n-1]\\
    \bar{\rho}_*\delta^{2(\ell_*-\ell_{\mu})}, & \omega_s\neq t \text{ for some } t\in \mathcal{C}_{\mu}\setminus \left\{j_* \right\},\\
    \bar{\rho}_*\delta^{2(\ell_*-\ell_{\iota})+1}, & \omega_s\neq b \text{ for some } b\in \mathcal{C}_{\iota},\ \iota\neq \mu
\end{cases} 
\end{array}
\end{equation*}
with a constant that is \emph{independent} of $\delta$.
\end{lem}
\begin{proof}
See Appendix \ref{lem:PcalEstPf}
\end{proof}

We are now ready to finish the proof of Theorem \ref{thm:node-accuracy-Dima}. By combining \eqref{eq:OmegaPhi} in the case $\kappa=1$ and Lemma \ref{lem:PcalEst}, we obtain that there exists some $\Xi_0>0$, independent of $\delta$, such that 
\begin{equation}\label{eq:Phi1bound}
\Phi_1\leq \Xi_0 \left(1+ \bar{\rho}_*\delta^{2(\ell_*-\ell_{\mu})}+{\color{blue} \sum_{\iota \neq \mu }}\bar{\rho}_*\delta^{2(\ell_*-\ell_{\iota})+1}\right)\bar{\epsilon} \delta^{n(n-1)+2\ell_*-\ell_\mu-2\varrho}.
\end{equation}
Recalling \eqref{eq:pLowerBdMultiClus111} and \eqref{eq:rdef3}, we have
\begin{equation}\label{eq:Rouche1}
\begin{array}{lll}
{\color{blue}R:=}\frac{\left|\bar{p}\left(z\right)\right| -  |r(z)|}{\delta^{n(n-1)+2\ell_*-\ell_\mu-2\varrho}}&\geq C_2 \bar{\rho}_
* - \frac{ \epsilon^n\Xi_n+\left(1+\frac{\bar{\epsilon}}{\bar{\rho}_*}\Xi_1+\dots +  \left(\frac{\bar{\epsilon}}{\bar{\rho}_*} \right)^{n-2}\Xi_{n-1}\right)\Phi_1}{\delta^{n(n-1)+2\ell_*-\ell_\mu-2\varrho}}\\
&\overset{\eqref{eq:HighPoweBd},\eqref{eq:Phi1bound}}{\geq} C_2 \bar{\rho}_
*  -\bar{\epsilon}^n\Xi_n-\bar{\epsilon}\left(1+\frac{\bar{\epsilon}}{\bar{\rho}_*}\Xi_1+\dots +  \left(\frac{\bar{\epsilon}}{\bar{\rho}_*} \right)^{n-2}\Xi_{n-1}\right)\\
&\hspace{12mm} \times \Xi_0 \left(1+ \bar{\rho}_*\delta^{2(\ell_*-\ell_{\mu})}+{\color{blue} \sum_{\iota \neq \mu }}\bar{\rho}_*\delta^{2(\ell_*-\ell_{\iota})+1}\right).
\end{array}
\end{equation}

{\color{blue}

At this point we impose a linear relationship between $\bar{\rho}_*$ and $\bar{\epsilon}$ as follows:
$$
\bar{\epsilon} = \alpha \bar{\rho}_*, \qquad \alpha < 3.
$$
The condition $\alpha<3$ ensures that $\bar{\epsilon} < 1$ for all $\bar{\rho}_* < \frac{1}{3}$. We can further bound the right-hand side of \eqref{eq:Rouche1}:
\begin{align}\label{eq:Rouche2}
    &\frac{R}{\bar{\rho}_*}  \geq  T(\alpha),\\
    &T(\alpha) := C_2 -\frac{\alpha^n \Xi_n}{3^{n-1}}-\alpha\left(1+\alpha \Xi_1+\dots +  \alpha^{n-2}\Xi_{n-1} \right)\Xi_0 \left(1+ \frac{\delta^{2(\ell_*-\ell_{\mu})}}{3}+\frac{ \sum_{\iota \neq \mu }\delta^{2(\ell_*-\ell_{\iota})+1}}{3}\right).\nonumber
\end{align}

% \begin{align}\label{eq:Rouche2}
%     \begin{split}
%     R&\geq C_2 \bar{\rho}_* -C_3 \alpha^n \bar{\rho}_*^n - C_4 \alpha \bar{\rho}_*(1+\alpha+\dots+\alpha^{n-2})\\
%     \frac{R}{\bar{\rho}_*} & \geq C_2 - C_3 \alpha^n - C_4 \alpha (1+\alpha+\dots+\alpha^{n-2}) =: T(\alpha)
%     \end{split}
% \end{align}

Since $T(0)=C_2>0$ and $T(\alpha)$ is continuous, there exists $\alpha_0$, independent of $\bar{\rho}_*$, such that $T(\alpha)>0$ for all $\alpha\in(0,\alpha_0)$. Now put $\alpha_1:=\min(\alpha_0,3)$. We have shown that for all $\alpha\in(0,\alpha_1)$ and all $\bar{\rho}_*<\frac{1}{3}$ we have $R>0$. Therefore, by applying Rouche's theorem \cite{conway1978functions} and using the fact that $p(z)$ and $\bar{p}\left(z\right)$ differ by a multiplicative constant the following implication is true:
$$
\forall \alpha\in (0,\alpha_1),\;\forall \bar{\rho}_*<\frac{1}{3}: \quad \epsilon=\alpha\bar{\rho}_*\delta^{2\ell_*-1} \implies |x_{j_*}-\tilde{x}_{j_*}|\leq \rho_*=\bar{\rho}_*\delta^{2(\ell_*-\ell_{\mu})+1}=\bar{\rho}_*\frac{\epsilon}{\bar{\epsilon}}\delta^{2-2\ell_{\mu}}=\frac{1}{\alpha}\delta^{2-2\ell_{\mu}}\epsilon.
$$
By \emph{fixing} some $0 < \alpha_2 < \alpha_1$ and letting $\bar{\rho}_*<\frac{1}{3}$ be arbitrary, we rewrite the above as
$$
\forall \bar{\rho}_*<\frac{1}{3}: \quad \epsilon=\bar{\rho}_*\alpha_2 \delta^{2\ell_*-1} \implies |x_{j_*}-\tilde{x}_{j_*}|\leq \frac{1}{\alpha_2}\delta^{2-2\ell_{\mu}}\epsilon,\quad j_*\in \mathcal{C}_{\mu}, \ \ell_{\mu}=\operatorname{card}(\mathcal{C}_{\mu}).
$$
Since the above holds for arbitrary $\delta<1$, this completes the proof of Theorem \ref{thm:node-accuracy-Dima}.
}

}

\section{Proof of Theorem \ref{thm:coeffs-accuracy-Dima}}\label{Sec:PfAmplitude}
{
% Let $j_*\in \mathcal{C}_{\mu}$ where the cluster is of size $\ell_{\mu}=\operatorname{card}(\mathcal{C}_{\mu})$. 
In this section we consider the amplitude approximation error $\left\{\left|\alpha_{j}-\tilde{\alpha}_{j} \right|\right\}_{j=1}^n$. Throughout the section we assume that the conditions of Theorem \ref{thm:node-accuracy-Dima} hold. In particular, we have $\epsilon = \bar{\epsilon} \delta^{2\ell_*-1}$, whereas given any $j\in [n], \ j\in \mathcal{C}_t$ we denote by $\rho_j = \bar{\rho}_j\delta^{2(\ell_*-\ell_{t})+1}$ {\color{blue} the upper bound on the node recovery error} $\left| x_j-\tilde{x}_j\right|$. Here,  $\bar{\epsilon}<1$ and $\bar{\rho}_j<\frac{1}{3}\min(1, T)$ are independent of $\delta$ {\color{blue} and chosen as in the proof of Theorem \ref{thm:node-accuracy-Dima}}. In particular, note that $\rho_j\lessapprox \frac{\epsilon}{\delta^{2\ell_{t}-2}}$, as obtained from Theorem \ref{thm:node-accuracy-Dima}. 
Let  $\left\{d_i \right\}, \left\{\breve{d}_i \right\} = O(1)$ be two sets of tolerance coefficients (see Theorem \ref{thm:coeffs-accuracy-Dima}).

Recall that the algebraic moments are given by $m_{k}=\sum_{j=1}^{n}\alpha_{j}x_{j}^{k}$. {\color{blue}Consistent with previous notation in \prettyref{prop:ampl-naive} and \prettyref{sec:proof-outline-coeffs}, set $\av=\operatorname{col}\{\alpha_i\}_{i=1}^{n}$, $\tilde{\av}=\operatorname{col}\{\tilde{\alpha}_i\}_{i=1}^{n}$, and $\Delta\vec{\alpha}=\tilde{\av}-\av$. Introducing
\begin{equation}
    \vec{m} = \operatorname{col} \left\{m_i \right\}_{i=0}^{n-1} , \quad \vec{\tilde{m}} = \operatorname{col}\left\{m_i+\epsilon \breve{d}_i \right\}_{i=0}^{n-1}, \quad \Delta\vec{m} = \vec{\tilde{m}}-\vec{m} \label{eq:FreeVars}
\end{equation}
we have that both the perturbed and unperturbed amplitudes satisfy
\begin{align}
  V\cdot \av = \vec{m}, \quad \tilde{V}\cdot \tilde{\av}=\vec{\tilde{m}} \label{eq:Linsyst}
\end{align}
}
with $\tilde{V}$ given in the formulation of Theorem \ref{thm:coeffs-accuracy-Dima} and $V$ having the same form as $\tilde{V}$ with the perturbed nodes $\left\{\tilde{x}_j\right\}_{j=1}^n$ replaced by the unperturbed nodes $\left\{x_j\right\}_{j=1}^n$. Denote further {\color{blue}$\tilde{V} - V =: \Delta V$}. Then
{\color{blue}
\begin{align}
    & \tilde{V} \cdot \Delta\av =  \Delta\vec{m}-\Delta V\cdot \av\Rightarrow \Delta\av = \tilde{V}^{-1}\cdot\left( \Delta \vec{m} - \Delta V\cdot \av\right) = \tilde{V}^{-1}\cdot \Delta\vec{ m} - \left(I - \tilde{V}^{-1}V\right)\cdot \av .\label{eq:VDMsystem1}
\end{align}
}
For the rightmost term in \eqref{eq:VDMsystem1}, let 
\begin{equation}
    \tilde{L}_i(z) = \prod_{k\neq i}\frac{z-\tilde{x}_k}{\tilde{x}_i-\tilde{x}_k} = \sum_{b=0}^{n-1} \tilde{l}_{i,b}z^b,\quad i=1,\dots,n \label{eq:PertLagr}
\end{equation}
be the Lagrange basis corresponding to $\tilde{x}_1,\dots, \tilde{x}_n$. Since the the columns of $\tilde{V}^{-T}$ contain the coefficients of the latter Lagrange basis, we have 
\begin{equation}
    \tilde{V}^{-1} = \begin{bmatrix}
    \tilde{l}_{1,0} & \dots & \tilde{l}_{1,n-1}\\
    \vdots & \vdots & \vdots \\
    \tilde{l}_{n,0} & \dots & \tilde{l}_{n,n-1}
    \end{bmatrix}\Longrightarrow \left[\tilde{V}^{-1}V\right]_{s,b} = \sum_{k=0}^{n-1}\tilde{l}_{s,k}x_b^k = \tilde{L}_{s}(x_b) = \prod_{m\neq s}\frac{x_b-\tilde{x}_m}{\tilde{x}_s-\tilde{x}_m}, \quad s,b\in [n].\label{eq:VDMEval}
\end{equation}

We begin by considering the term {\color{blue}$\tilde{V}^{-1}\cdot \Delta\vec{m}$} in \eqref{eq:VDMsystem1}.
\begin{prop}\label{eq:Clust1Sum}
Fix $j\in [n]$ such that $j\in \mathcal{C}_t$. The following holds
{\color{blue}
\begin{align*}
    &\left|\left[\tilde{V}^{-1} \cdot \Delta\vec{m} \right]_j\right|\lessapprox \frac{\epsilon}{\delta^{\ell_t-1}}.
\end{align*}
}
\end{prop}
\begin{proof}
See Appendix \ref{eq:Clust1SumPf}.
\end{proof}

Consider now the term $\left(I - \tilde{V}^{-1}V\right)$, appearing in \eqref{eq:VDMsystem1}.
\begin{prop}\label{eq:Clust2Sum}
Fix $j\in [n]$ such that $j\in \mathcal{C}_t$. The following estimate holds:
\begin{align}
    \left|\boldsymbol{\delta}_{j,s}-\left[\tilde{V}^{-1}V\right]_{j,s} \right| \lessapprox \begin{cases}
    \frac{\epsilon}{\delta^{2\ell_t-1}}, \quad s\in \mathcal{C}_t  ,\\
    \frac{\epsilon}{\delta^{\ell_t + \max_{a\neq t}\ell_a-2}}, \quad s\notin \mathcal{C}_t
    \end{cases}\label{eq:TildVVBdClus}
\end{align}
where $\boldsymbol{\delta}_{j,s}$ denotes the Kronecker delta.
\end{prop}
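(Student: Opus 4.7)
The plan is to analyze the Lagrange representation $[\tilde V^{-1}V]_{j,s} = \tilde L_j(x_s) = \prod_{m\neq j}(x_s-\tilde x_m)/(\tilde x_j-\tilde x_m)$ from \eqref{eq:VDMEval} factor-by-factor. Writing $\eta_i := x_i-\tilde x_i$ and invoking \prettyref{thm:node-accuracy-Dima}, the threshold condition $\epsilon\lessapprox \delta^{2\ell_*-1}$ ensures $|\eta_i|\lessapprox \delta^{2-2\ell_{a(i)}}\epsilon\ll\delta$ where $a(i)$ denotes the cluster of index $i$. In particular this guarantees $|\tilde x_i-\tilde x_k|\asymp|x_i-x_k|$ throughout, so every difference of perturbed nodes can be estimated using the clustered-configuration gaps ($\asymp\delta$ within a cluster and $\asymp T$ between clusters).

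The within-cluster case $s\in\mathcal{C}_t$ (both $s=j$ and $s\neq j$) I would handle together by an ``extract one $\eta$'' argument. When $s=j$, using $x_j-\tilde x_m=(\tilde x_j-\tilde x_m)+\eta_j$ rewrites $\tilde L_j(x_j)=\prod_{m\neq j}(1+\eta_j/(\tilde x_j-\tilde x_m))$; each in-cluster term contributes $O(\delta^{1-2\ell_t}\epsilon)$, each out-of-cluster term is strictly smaller, and a first-order expansion yields $|\tilde L_j(x_j)-1|\lessapprox \delta^{1-2\ell_t}\epsilon$. When $s\in\mathcal{C}_t\setminus\{j\}$, I would factor out $m=s$ to obtain a leading $\eta_s/(\tilde x_j-\tilde x_s)=O(\delta^{1-2\ell_t}\epsilon)$, while the remaining product is $O(1)$ (inside $\mathcal{C}_t$ both numerator and denominator are $\asymp\delta$, outside both are $\asymp T$). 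Either way the bound $\epsilon/\delta^{2\ell_t-1}$ emerges.

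The cross-cluster case $s\in\mathcal{C}_a$ with $a\neq t$ is the main technical step, and I expect it to be the hardest piece of bookkeeping. After factoring out $m=s$ as before, the leading factor is only $O(\delta^{2-2\ell_a}\epsilon)$ since now $|\tilde x_j-\tilde x_s|\asymp T$, but the remaining product is no longer $O(1)$: I would split it by the cluster of $m$, noting that each of the $\ell_t-1$ factors with $m\in\mathcal{C}_t\setminus\{j\}$ contributes a ratio $\asymp T/\delta$, each of the $\ell_a-1$ factors with $m\in\mathcal{C}_a\setminus\{s\}$ contributes $\asymp\delta/T$, and the remaining factors (from other clusters) are $O(1)$. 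Multiplying yields the telescoped estimate $|\tilde L_j(x_s)|\lessapprox \delta^{2-\ell_a-\ell_t}\epsilon$, and since $\delta<1$, maximizing $\ell_a$ over $a\neq t$ produces the stated bound $\epsilon/\delta^{\ell_t+\max_{a\neq t}\ell_a-2}$. The edge cases $\ell_t=1$ (no in-cluster $m$ factors for $j$) and $\zeta=1$ (the second alternative is vacuous) should be noted but pose no additional difficulty.
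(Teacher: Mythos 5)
Your proposal is correct and follows essentially the same route as the paper's proof: in every case one extracts the single small factor $x_s-\tilde x_s=\eta_s$ (or expands in $\eta_j$ when $s=j$), bounds it via Theorem \ref{thm:node-accuracy-Dima}, and counts powers of $\delta$ from the clustered geometry for the remaining factors, the only difference being that you pair each numerator factor with its denominator factor while the paper bounds the two products separately. The exponent bookkeeping in all three cases, including the cross-cluster case yielding $\delta^{2-\ell_a-\ell_t}\epsilon$, matches the paper exactly.
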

\begin{proof}
See Appendix \ref{eq:Clust2SumPf}.
\end{proof}
From Propositions \ref{eq:Clust1Sum} and \ref{eq:Clust2Sum}, and $\mathfrak{m}_{\alpha}\leq \alpha_i \leq \mathfrak{M}_{\alpha},\ i\in [n]$, we obtain the following estimate for {\color{blue}$(\Delta\av)_j$} (see \eqref{eq:VDMsystem1}), where $j\in \mathcal{C}_t$:
\begin{align}
 {\color{blue}\left|(\Delta\av)_j \right|} & {\color{blue}\leq \left|\left[\tilde{V}^{-1} \cdot\Delta\vec{m} \right]_j\right|}+ \sum_{s=1}^n\left(\left|\boldsymbol{\delta}_{j,s}-\left[\tilde{V}^{-1}V\right]_{j,s} \right|\cdot \left|\alpha_s \right|\right) \nonumber\\
&\overset{}{\lessapprox} \frac{\epsilon}{\delta^{\ell_t-1}}+\frac{\epsilon}{\delta^{2\ell_t-1}}+\frac{\epsilon}{\delta^{\ell_t + \max_{a\neq t}\ell_a-2}}\lessapprox\frac{\epsilon}{\delta^{2\ell_*-1}}.\label{eq:alphaeClust}
\end{align}
Note that the analysis in Proposition \ref{eq:Clust2Sum} yields an upper bound which couples the different clusters together via their sizes $\left\{\ell_{a} \right\}_{a\in [\zeta]}$. Therefore, in \eqref{eq:alphaeClust}, we see that clusters of larger size may influence accuracy of coefficients belonging to cluster of smaller size. This leads to a non-sharp upper bound on the amplitude errors, which will be improved in the next section.

\begin{remark}\label{eq:SingleNodeAmp}
 If $x_j, \ j\in \mathcal{C}_t$ is an isolated node, the estimate \eqref{eq:alphaeClust} is replaced by 
\begin{align}
    {\color{blue} \left|(\Delta\av)_j \right|} &\lessapprox \epsilon +\frac{\epsilon}{\delta^{\max_{a\neq t}\ell_a-1}} \lessapprox \frac{\epsilon}{\delta^{\ell_*-1}}.   \label{eq:alphaeClustIsolated}
\end{align}
See Remark \ref{Rem:SingClus} for the necessary modifications.
\end{remark}
}

\subsection{An improved bound on the amplitude error}\label{sec:impBound}

\newcommand{\kk}{\ensuremath{\kappa}}

{\color{blue} Considering $(x_j,\alpha_j)$ with $j\in \mathcal{C}_t$,} from \eqref{eq:TildVVBdClus} and \eqref{eq:alphaeClust} it can be seen that the leading  contribution to the amplitude error stems
from {\color{blue} pairs $(x_s,\alpha_s)$} satisfying $s\in \mathcal{C}_a$ with $a\neq t$ (i.e., pairs \emph{out of the cluster  $\mathcal{C}_t$}), if $\ell_a > \ell_t$. Here we aim to improve the previous analysis, in order to achieve a better bound on the amplitude error.

Let $b\in [\zeta], \ b\neq t$. We consider the contribution of the nodes in $\mathcal{C}_b= \left\{x_{i_1},\dots,x_{i_{\ell_b}} \right\}$ to {\color{blue}$\left|(\Delta\av)_j \right|,\ j\in \mathcal{C}_t$} (leading to the suboptimal bound $\frac{\epsilon}{\delta^{\ell_t + \max_{a\neq t}\ell_a-2}}$ in \eqref{eq:alphaeClust}) through {\color{blue}$\left(I - \tilde{V}^{-1}V\right)\cdot \av$} (see \eqref{eq:VDMsystem1}). For simplicity of notations, we assume henceforth that $b=1$ and $x_{i_k}=x_k,\ k\in [\ell_1]$. This can always be achieved by index permutation.

Since $t\neq 1$, the contribution of $\mathcal{C}_1$ is given by 
\begin{align}
&\mathcal{V}_{1}:=\sum_{\nu=1}^{\ell_1} \left[\tilde{V}^{-1} V\right]_{j,\nu} \alpha_{\nu}  \overset{\eqref{eq:VDMEval}}{=}\sum_{\nu=1}^{\ell_1} \alpha_{{\nu}} \prod_{m \neq j} \frac{x_{{\nu}}-\tilde{x}_{m}}{\tilde{x}_{j}-\tilde{x}_{m} }.\label{eq:ContribCc}
\end{align}
The following theorem bounds $\mathcal{V}_1$. {Although we assume that $b=1$ for ease of notation, note that the theorem remains valid for any cluster $\mathcal{C}_{b}$ where $b\neq t$ (the proof remains the same subject to more tedious notations)}.

\begin{thm}\label{Thm:ImprovedClust}
    Assume the conditions of \prettyref{thm:coeffs-accuracy-Dima}. The following estimate holds:
    \begin{align*}
        \mathcal{V}_1 \lessapprox \delta^{1-\ell_t}\epsilon.
    \end{align*}
\end{thm}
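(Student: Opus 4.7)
The plan is to uncover a divided-difference structure within the sum $\mathcal{V}_1$ that produces an extra factor of $\delta^{\ell_1-1}$ over the naive per-term bound of Proposition \ref{eq:Clust2Sum}. I begin by factoring the Lagrange polynomial,
$$
  \tilde{L}_j(z) = \frac{\tilde h(z)\,g(z)}{\tilde p'(\tilde x_j)},\quad \tilde h(z):=\prod_{m\in\mathcal{C}_1}(z-\tilde x_m),\ \ g(z):=\prod_{m\notin\mathcal{C}_1,\,m\ne j}(z-\tilde x_m),
$$
so that $g$ is smooth and of magnitude $\asymp 1$ on $\mathcal{C}_1$, while $|\tilde p'(\tilde x_j)|\gtrapprox \delta^{\ell_t-1}$ by \prettyref{def:cluster} combined with \prettyref{thm:node-accuracy-Dima} (the intra-cluster factors in $\mathcal{C}_t$ contribute $\delta^{\ell_t-1}$; all other factors are $\asymp 1$). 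The claim thus reduces to
$$
  S := \sum_{\nu\in\mathcal{C}_1}\alpha_\nu\,\tilde h(x_\nu)\,g(x_\nu) \lessapprox \epsilon.
$$

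Introduce $h_1(z):=\prod_{\nu\in\mathcal{C}_1}(z-x_\nu)$ and the interpolating polynomial $B(z)$ of degree $\le \ell_1-1$ determined by $B(x_\nu)=\alpha_\nu\, h_1'(x_\nu)$. Then
$$
  S = \sum_{\nu\in\mathcal{C}_1}\frac{B(x_\nu)\,\tilde h(x_\nu)\,g(x_\nu)}{h_1'(x_\nu)} = (B\tilde h g)[x_1,\ldots,x_{\ell_1}],
$$
which is the $(\ell_1-1)$-st divided difference of $B\tilde h g$ at the cluster $\mathcal{C}_1$. Decomposing $\tilde h=h_1+s$ with $\deg s\le \ell_1-1$, and noting that $h_1$ vanishes on $\mathcal{C}_1$, gives $(Bh_1 g)[\mathcal{C}_1]=0$ and hence $S=(Bsg)[\mathcal{C}_1]$. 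A direct pointwise estimate via $|B(x_\nu)|\lessapprox \delta^{\ell_1-1}$, $|s(x_\nu)|=|\tilde h(x_\nu)|\lessapprox \epsilon/\delta^{\ell_1-1}$, $|g(x_\nu)|\asymp 1$, and $|h_1'(x_\nu)|\asymp \delta^{\ell_1-1}$ yields only $|S|\lessapprox \epsilon/\delta^{\ell_1-1}$, which is exactly the suboptimal bound already obtained in \eqref{eq:alphaeClust}.

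The main obstacle is therefore to exhibit the remaining $\delta^{\ell_1-1}$ cancellation inside the divided difference. My plan is to substitute the first-order perturbative formula from \prettyref{lem:etaj},
$$
  \alpha_\nu\,\varepsilon_\nu = (-1)^n\,\frac{\Psi_\nu(D)\,\epsilon}{\prod_{m\ne\nu}(x_\nu-x_m)^2} + O(\epsilon^2),\quad \varepsilon_\nu:=\tilde x_\nu - x_\nu,
$$
into the exact expansion $\tilde h(x_\nu)=-\varepsilon_\nu\prod_{m\in\mathcal{C}_1,m\ne\nu}(x_\nu-\tilde x_m)$, and exploit the factorization $\prod_{m\ne\nu}(x_\nu-x_m)^2=(h_1'(x_\nu))^2\prod_{m\notin\mathcal{C}_1}(x_\nu-x_m)^2$. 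The two copies of $h_1'(x_\nu)$ supplied by $B(x_\nu)$ and by the leading term of $\tilde h(x_\nu)$ then cancel the denominator $(h_1'(x_\nu))^2$ from Lemma \ref{lem:etaj}, leaving each summand of $S$ of the form
$$
  \frac{(-1)^{n+1}\,\epsilon\,\Psi_\nu(D)\,g(x_\nu)}{h_1'(x_\nu)\prod_{m\notin\mathcal{C}_1}(x_\nu-x_m)^2} + O(\epsilon^2),
$$
i.e., a Lagrange-type weight against a function whose numerator is, modulo the combinatorial $\nu$-dependence inside $\Psi_\nu(D)$, a smooth function of $x_\nu$ on $\mathcal{C}_1$. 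Using the Vandermonde-type identity $\prod_{s<t,\,s,t\ne\nu}(x_s-x_t)=(-1)^{\nu-1}V_n/\prod_{m\ne\nu}(x_\nu-x_m)$ to rewrite the denominator of $\Psi_\nu(D)$ in \eqref{eq:psi-j-explicit}, the alternating $(-1)^{\nu-1}$ signs combine with the polynomial $\nu$-dependence of the factors $x_\nu^{\mathrm{a}}\psi_{\underline{\lambda},\qs_\nu}$ to produce precisely an $(\ell_1-1)$-st order divided difference of a function bounded uniformly in $\delta$ by \eqref{eq:phibound}. The mean-value theorem for divided differences then yields $|S|\lessapprox \epsilon$, where the higher-order remainder $O(\epsilon^2)$ is absorbed using the smallness assumption $\epsilon\lessapprox \delta^{2\ell_*-1}$. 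Dividing by $|\tilde p'(\tilde x_j)|\gtrapprox \delta^{\ell_t-1}$ delivers $\mathcal{V}_1\lessapprox \delta^{1-\ell_t}\epsilon$, as claimed.
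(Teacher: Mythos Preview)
Your reduction to $S:=\sum_{\nu\in\mathcal{C}_1}\alpha_\nu\,\tilde h(x_\nu)\,g(x_\nu)$ with target $|S|\lessapprox\epsilon$ is correct and is exactly the paper's starting point in disguise (there $A\cdot B\cdot C\cdot\mathcal{V}_1=\sum_\nu\alpha_\nu\bigl[\prod_{r\in[\ell_1]\setminus\{\nu\}}(x_r-\tilde x_j)\bigr]\bar q(x_\nu)$; since $\bar q(x_\nu)=A\prod_m(x_\nu-\tilde x_m)$, cancelling $A\cdot C$ gives $B\cdot\mathcal{V}_1=S$). The divided-difference rewriting $S=(Bsg)[\mathcal{C}_1]$ is also valid, and you correctly note that a naive pointwise bound only reproduces \eqref{eq:alphaeClust}.

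The argument breaks down where you invoke \prettyref{lem:etaj}. Two issues. First, the claimed divided-difference structure of the leading $\epsilon$-term is asserted but not established: $\Psi_\nu(D)$ depends on $\nu$ through the quotient $\psi_{\underline\lambda,\qs_\nu}\big/\prod_{s<t;\,s,t\neq\nu}(x_s-x_t)$, which is a symmetric polynomial in $\mathfrak X\setminus\{x_\nu\}$, \emph{not} the evaluation of a fixed function at $x_\nu$. To apply the mean-value theorem for divided differences you must first produce a single smooth $G$ with $G(x_\nu)$ equal to your summand; this is possible (via $e_k(\mathfrak X\setminus\{x_\nu\})=\sum_j(-x_\nu)^je_{k-j}(\mathfrak X)$) but it is a nontrivial step you have not supplied, and the sentence ``the alternating signs combine with the polynomial $\nu$-dependence'' does not substitute for it. Second, and more seriously, \prettyref{lem:etaj} is purely asymptotic as $\epsilon\to 0^+$ with $\delta$ fixed; the $O(\epsilon^2)$ constant there depends on $\delta$ and the paper gives no uniform control of it. Since each summand of $S$ has magnitude $\asymp\epsilon\delta^{1-\ell_1}$ and the goal is $|S|\lessapprox\epsilon$, you need a cancellation of order $\delta^{\ell_1-1}$ in the \emph{full} sum, not only in its first-order part; your proposal contains no mechanism for the higher-$\epsilon$ contributions, and ``absorbed using $\epsilon\lessapprox\delta^{2\ell_*-1}$'' does not address this.

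The paper avoids truncation entirely. From the exact identity $A\prod_m(x_\nu-\tilde x_m)=\bar q(x_\nu)=r(x_\nu)$ and the expansion \eqref{eq:rdef1} it writes $ABC\cdot\mathcal{V}_1=E+F$ with $E=\sum_{\kappa=1}^{n-1}\epsilon^\kappa E_\kappa$, and then shows for each $\kappa$ that the building blocks $\tilde E_\kappa^{(\gamma,\beta)}(\qs)$ are invariant under transpositions of nodes within $\mathcal{C}_1$ and within its complement, and vanish whenever two such nodes coincide; hence each is divisible by the squared Vandermonde $H\asymp|A|$. An induction on $\kappa$ (using \prettyref{lem:PcalRecur}) then gives $|\epsilon^\kappa E_\kappa/A|\lessapprox\epsilon$ uniformly in $\delta$, from which $\mathcal{V}_1\lessapprox\delta^{1-\ell_t}\epsilon$ follows.
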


\begin{proof}
Denote
   \begin{align*}
   A&:=\det \tilde{H}_n = (-1)^n \biggl(\prod_{k=1}^{n}\Tilde{\alpha}_k\biggr)\bigg(\prod_{1\leq m < s \leq n}(\tilde{x}_{m}-\tilde{x}_s)^2\bigg)\\
   B&:= \prod_{m\neq j}\left(\tilde{x}_j-\tilde{x}_m\right),\quad   C:= \prod_{\nu=1}^{\ell_1} (x_{{\nu}}-\tilde{x}_j).
   \end{align*}

   These quantities can be effectively bounded \emph{from below}, using the following facts:
   \begin{enumerate}
    \item  $|\tilde{x}_j-\tilde{x}_m|\gtrapprox \delta$ by \prettyref{thm:node-accuracy-Dima};
    \item $|\tilde{\alpha}_j-\alpha_j| \lessapprox 1$ by \eqref{eq:alphaeClust}. Hence, by decreasing $\alpha $ in the proof of \prettyref{thm:node-accuracy-Dima} (see definition above \eqref{eq:Rouche2}), this quantity can be made smaller than $\frac{\mathfrak{m}_{\alpha}}{2} \leq \left|\frac{\alpha_j}{2} \right|$;
    \item $x_j\notin \mathcal{C}_1$.
   \end{enumerate}

   Therefore we have
   \begin{align}
       A & \gtrapprox \delta^{2\sum_{s\in[\zeta]} \binom{\ell_s}{2}},\quad B \gtrapprox \delta^{\ell_t-1}, \quad 
       C \gtrapprox 1 \label{eq:ABCEst}
   \end{align}   

   Note that $\bar{q}(x_{{\nu}}) =  A \prod_{m=1}^n \left(x_{{\nu}}-\tilde{x}_m\right) = \underbrace{\bar{p}(x_{{\nu}})}_{=0}+r(x_{{\nu}})$ for any $\nu \in [\ell_1]$. Recalling \eqref{eq:rdef} and \eqref{eq:ContribCc}, we have
   \begin{align*}
   A\cdot B\cdot C \cdot \mathcal{V}_1  &= \sum_{\nu=1}^{\ell_1} \alpha_{{\nu}} \biggl[ \prod_{r\in [\ell_1]\setminus\{\nu\}} (x_{r}-\tilde{x}_j) \biggr] \bar{q}(x_{{\nu}})\\
   &= \sum_{\kk=1}^{n-1}\epsilon^\kk \sum_{\gamma \in \mathcal{Q}^{n+1}_{n+1-\kappa}}\sum_{\beta \in \mathcal{Q}^{n+1}_{n+1-\kappa}: 1\in \beta}\left[\operatorname{adj}_{n+1-\kappa}(D) \right]_{\gamma, \beta} \sum_{\nu=1}^{\ell_1} \alpha_{{\nu}} \biggl[ \prod_{r\in [\ell_1]\setminus\{\nu\}} (x_{r}-\tilde{x}_j) \biggr] \Gamma_{\beta,\gamma}(x_{{\nu}}) \\
   &\quad + \sum_{\nu=1}^{\ell_1} \alpha_{{\nu}} \biggl[ \prod_{r\in [\ell_1]\setminus\{\nu\}} (x_{r}-\tilde{x}_j) \biggr] \biggl\{{ \left( \sum_{i=1}^{n+1}\left[\operatorname{adj}(D) \right]_{i, 1}x_{\nu}^{i-1}\right)}\epsilon^n\biggr\}\\
   &=: E+F,
   \end{align*}
   where by \eqref{eq:GammaParam} and \eqref{eq:gamma-beta-gamma-explicit}, 
   \begin{equation*}
    \begin{array}{lll}
    \Gamma_{\beta,\gamma}(x_{{\nu}})&=\sum_{(\omega_1,\dots,\omega_{n-\kappa})\in \mathcal{Q}^{n}_{n-\kappa}}\left\{\left(\prod_{s=1}^{n-\kk}\alpha_{\omega_s} \right)\left(\prod_{s=1}^{n-\kappa} (x_{\omega_s}-x_{{\nu}}) \right)\right. \vspace{0.1cm}\\
    &\hspace{5mm}\left.\times \left(\prod_{1\leq s<t\leq n-\kappa } (x_{\omega_t}-x_{\omega_s}) \right)s_{\underline{\lambda}}\left(x_{{\nu}},x_{\omega_1},\dots,x_{\omega_{n-\kappa}} \right) x_{{\nu}}^{\mathrm{a}}\psi_{\underline{\lambda},\omega_1,\dots,\omega_{n-\kappa}}\right\}.
    \end{array}
    \end{equation*}
    Here $\psi_{\underline{\lambda},\omega_1,\dots,\omega_{n-\kappa}}$ is given in \eqref{eq:psilambdDef} and $s_{\underline{\lambda}}\left(x_{\nu},x_{\omega_1},\dots,x_{\omega_{n-\kappa}} \right)$ is the Schur polynomial for the partition $\underline{\lambda}$ and in the variables $x_{\nu},x_{\omega_1},\dots,x_{\omega_{n-\kappa}}$ \cite[Chapter 3]{macdonald1998symmetric}. Note that to get a nonzero summand in $\Gamma_{\beta,\gamma}(x_{{\nu}})$, $\omega_1,\dots,\omega_{n-\kk}$ should \emph{all} be different from ${\nu}$.

    To take care of the term $F$, {\color{blue} we use \eqref{eq:HighPoweBd} to obtain }
    \begin{equation*}
    \frac{\epsilon^{n-1}}{\bar{\epsilon}^{n-1}\left|A\right|}\lessapprox \delta^{(n-1)(2\ell_*-1)-\sum_{s\in [\zeta]}\ell_s(\ell_s-1)} = \delta^{(n-1)(2\ell_*-1)-g_{\zeta,n}(\ell_1,\dots,\ell_{n})}
    \end{equation*}
    where we further employ the notations of Appendix \ref{prop:HighPowerBdsPf}. The proof therein shows that $n(2\ell_*-1)-g_{\zeta,n}(\ell_1,\dots,\ell_n)-2\ell_*+\ell_{\mu}\geq 0$ for any $1\leq \ell_{\mu}\leq n$. Setting $\ell_{\mu}=1$, we get $(n-1)(2\ell_*-1)-\sum_{s\in [\zeta]}\ell_s(\ell_s-1)\geq 0$. Therefore, it is clear that $\frac{\epsilon^{n-1}}{\bar{\epsilon}^{n-1}\left|A\right|}\lessapprox 1$, whence $\left|\frac{F}{A} \right|\lessapprox \epsilon$. {\color{blue} In particular, taking into account \eqref{eq:ABCEst}, this yields
    \begin{equation*}
    \begin{array}{lll}
    &\left|\frac{F}{A\cdot B \cdot C} \right|\lessapprox \frac{\epsilon}{\delta^{\ell_t -1}},
    \end{array}
    \end{equation*}
    which is the desired upper bound.}
    % The $F$ term can easily be taken care of: since $\epsilon=\bar{\epsilon} \delta^{2\ell_*-1}$ we have 
    % \begin{align*}
    %     (\epsilon/\bar{\epsilon})^{n-1} &= \delta^{(2\ell_*-1)(n-1)}=\delta^{2\ell_*\sum_{s\in[\zeta]}\ell_s-2\ell_*-n+1}\\
    %     \implies \epsilon\frac{\epsilon^{n-1}}{A} &\lessapprox \epsilon \cdot \delta^{2\ell_*\sum_{s\in[\zeta]}\ell_s-2\ell_*-n+1-\sum_{s\in[\zeta]}\ell_s(\ell_s-1)}\\
    %     2\ell_*\sum_{s\in[\zeta]}\ell_s-2\ell_*-\sum_{s\in[\zeta]}\ell_s+1-\sum_{s\in[\zeta]}\ell_s^2+\sum_{s\in[\zeta]}\ell_s &= \ell_*\sum_{s\in[\zeta]}\ell_s-\sum_{s\in[\zeta]}\ell_s^2+\ell_*\sum_{s\in[\zeta]}\ell_s-2\ell_*+1\\
    %     &= \sum_{s\in[\zeta]}\ell_s(\ell_*-\ell_s)+\ell_*(n-2)+1\\
    %     &\geq 0\\
    %     \implies |F/A| &\lessapprox \epsilon.
    % \end{align*}
Next, we consider $E$. To shorten the notation, we will write $\qs=(\omega_1,\dots,\omega_{n-\kappa})$ for a general element in $\mathcal{Q}^{n}_{n-\kappa}$. Rearranging the order of summation, we can write $E = \sum_{\kappa=1}^{n-1}\epsilon^{\kappa}E_k$, where
\begin{align}\label{eq:e-term}
\begin{split}
&E_{\kk}:= \sum_{\gamma \in \mathcal{Q}^{n+1}_{n+1-\kappa}}\sum_{\beta \in \mathcal{Q}^{n+1}_{n+1-\kappa}: 1\in \beta}\left[\operatorname{adj}_{n+1-\kappa}(D) \right]_{\gamma, \beta} \sum_{\qs \in \mathcal{Q}^{n}_{n-\kappa}} \tilde{E}_{\kappa}^{(\gamma,\beta)}(\qs)\\
&\tilde{E}_{\kappa}^{(\gamma,\beta)}(\qs):= \left(\prod_{s=1}^{n-\kk}\alpha_{\omega_s} \right) \left(\prod_{1\leq s<t\leq n-\kappa } (x_{\omega_t}-x_{\omega_s}) \right)\psi_{\underline{\lambda},\qs} \\
&\qquad \qquad \times  \sum_{\nu=1}^{\ell_1} \alpha_{\nu} s_{\underline{\lambda}}\left(x_{\nu},x_{\omega_1},\dots,x_{\omega_{n-\kappa}} \right) x_{\nu}^{\mathrm{a}} \left(\prod_{s=1}^{n-\kappa} (x_{\omega_s}-x_{\nu}) \right) \prod_{r\in [\ell_1]\setminus\{\nu \}} (x_{r}-\tilde{x}_j).
\end{split}
\end{align}

We denote, as in {\color{blue} the proof of} Lemma~\ref{lem:etaj}, $\qs_s=(1,2,\dots,s-1,s+1,\dots,n)\in\mathcal{Q}^n_{n-1}$ and $\mathfrak{X}=\{x_1,\dots,x_n\}$.

 Consider first the case $\kappa=1$. Given $(\omega_1,\dots,\omega_{n-1})\in \mathcal{Q}_{n-1}^n$, we consider the product $\prod_{s=1}^{n-1} (x_{\omega_s}-x_{{\nu}})$ and notice that it equals zero unless $\left\{{\nu} \right\}\bigcup \left\{\omega_j \right\}_{j=1}^{n-1}=[n]$. Thus, we obtain
\begin{equation*}
\begin{array}{lll}
&E_1 =  \sum_{\gamma \in \mathcal{Q}^{n+1}_{n}}\sum_{\beta \in \mathcal{Q}^{n+1}_{n}: 1\in \beta}\left[\operatorname{adj}_{n}(D) \right]_{\gamma, \beta}\sum_{\qs_{\nu},\nu \in [\ell_1]}\tilde{E}_{1}^{(\gamma,\beta)}(\qs_{\nu}),\\
&\tilde{E}_{1}^{(\gamma,\beta)}(\qs_{\nu}):= \left(\prod_{s\in [n]\setminus \left\{\nu \right\}}\alpha_{s} \right) \left(\prod_{1\leq s<t\leq n, s,t\neq \nu } (x_{t}-x_{s}) \right)\psi_{\underline{\lambda},\qs_{\nu}} \\
&\qquad \qquad \times  \sum_{\nu=1}^{\ell_1} \alpha_{\nu} s_{\underline{\lambda}}\left(\mathfrak{X} \right) x_{\nu}^{\mathrm{a}} \left(\prod_{s\in [n]\setminus\{\nu\}}(x_{s}-x_{\nu}) \right) \prod_{r\in [\ell_1]\setminus\{\nu \}} (x_{r}-\tilde{x}_j)
\end{array}
\end{equation*}
which {\color{blue} we \emph{treat as a multivariate polynomial} in what comes next, in order to infer its divisibility}. Using this presentation and the properties of $s_{\underline{\lambda}}$ and $\psi_{\underline{\lambda},\qs_{\nu}}$ {\color{blue} (recall that the former polynomial is symmetric, whereas the latter is an alternating polynomial, as was shown in Appendix \ref{lem:GammBetAlphExp})}, it 
can be readily verified that $E_1$ satisfies the following properties:
\begin{enumerate}
\item For any $s<t, \  s,t\notin\mathcal{C}_1$, if $x_s=x_t$ then $E_1=0$: { in this case we have $\prod_{1\leq s<t\leq n, s,t\neq \nu } (x_{t}-x_{s})\equiv 0$ in $\tilde{E}_{1}^{(\gamma,\beta)}(\qs_{\nu})$.}
\item $E_1$ is invariant with respect to any transposition of two nodes not in $\mathcal{C}_1$. { This follows from symmetry of the polynomials $s_{\underline{\lambda}}$ and $\psi_{\underline{\lambda},\qs_{\nu}}\cdot \prod_{1\leq s<t\leq n, s,t\neq \nu } (x_{t}-x_{s})$}.
\item For any $s<t, \ s,t\in\mathcal{C}_1$, if $x_s=x_t$ then $E_1=0$. This follows from $\prod_{s\in [n]\setminus{\nu}}(x_{s}-x_{\nu})\equiv 0$ in $\tilde{E}_{1}^{(\gamma,\beta)}(\qs_{\nu})$.
\item $E_1$ is invariant with respect to any transposition of the nodes of $\mathcal{C}_1$. {This again follows from symmetry of the polynomials in $\tilde{E}_{1}^{(\gamma,\beta)}(\qs_{\nu})$ and summation over $\qs_{\nu},\ \nu\in [\ell_1]$. }
\end{enumerate}
As a result, $E_1$ is divisible by    
    $$
    H:= \prod_{s<t; s,t\notin\mathcal{C}_1} (x_s-x_t)^2 \prod_{1\leq r < s \leq \ell_1}(x_{r}-x_{s})^2.
    $$
    This {\color{blue} divisibility} immediately implies
    $$
    |E_1| \lessapprox |H| \lessapprox \delta^{2\sum_{s\neq 1} \binom{\ell_s}{2}+2\binom{\ell_1}{2}}=\delta^{2\sum_{s\in[\zeta]}\binom{\ell_s}{2}} \implies \epsilon \left|\frac{E_1}{A} \right| \lessapprox \epsilon.
    $$
    {\color{blue} Hence, by again taking into account \eqref{eq:ABCEst}, we have
    \begin{equation*}
    \epsilon \left|\frac{E_1}{A\cdot B \cdot C}\right|\lessapprox \frac{\epsilon}{\delta^{\ell_t-1}},
    \end{equation*}
    which involves the desired upper bound.}

    We now proceed by induction on $\kappa$ to show that $|E_{\kk} \epsilon^\kk / A| \lessapprox \epsilon$ for $\kk=1,\dots,n-1$.  
    It can be readily verified that for each $\mathfrak{q}\in \mathcal{Q}^{n}_{n-\kappa}$, $\tilde{E}_{\kappa}^{(\gamma,\beta)}(\qs)$ in \eqref{eq:e-term} satisfies the symmetry properties (1)--(4) above with respect to the sets $\mathfrak{q}\setminus\mathcal{C}_1$ and $\mathfrak{q}\cap \mathcal{C}_1$. Consequently, for each $\gamma,\beta$, $\tilde{E}_{\kappa}^{(\gamma,\beta)}(\qs)$ is divisible by $H(\qs)$ where
    $$
    H(\mathfrak{q}):= \prod_{s<t, s,t\in \mathfrak{q}\setminus\mathcal{C}_1 } (x_s-x_t)^2  \prod_{r<s, r,s\in \mathfrak{q} \cap \mathcal{C}_1}(x_{r}-x_{s})^2
    $$
    does not depend on $\gamma,\beta$.

    Let $\mathfrak{q}_1\in \mathcal{Q}^n_{n-\kk}$, $\mathfrak{q}_2\in \mathcal{Q}^n_{n-\kk-1}$ such that $\mathfrak{q}_2 \subset \mathfrak{q}_1$ with the obvious meaning (i.e. they differ by a single index). Since $\epsilon \lessapprox \delta^{2\ell_*-1}$, we conclude, by an argument similar to the one employed in the proof of Lemma \ref{lem:PcalRecur}, that
    $$
    \epsilon |H(\mathfrak{q}_2)| \lessapprox |H(\mathfrak{q}_1)|.
    $$

    Applying the induction hypothesis, we have for each $\qs\in\mathcal{Q}^n_{n-\kk}$ and each $\gamma,\beta$ that
    $$
    \epsilon^{\kk-1} |H(\qs)| \lessapprox |H| \implies \epsilon^{\kk-1} \left| \tilde{E}_{\kappa}^{(\gamma,\beta)}(\qs) \right| \lessapprox |H| \implies \epsilon^{\kk-1} | E_{\kk} | \lessapprox |H|.
    $$
    We conclude that $|E_{\kk} \epsilon^\kk / A| \lessapprox \epsilon$ for $\kk=1,\dots,n-1$.
    This proves $\mathcal{V}_1 \lessapprox \delta^{1-\ell_t}\epsilon$, as required.
\end{proof}

\begin{proof}[Proof of Theorem~\ref{thm:coeffs-accuracy-Dima}]
Theorem \ref{Thm:ImprovedClust} provides the following, improved, estimate on $\Delta\av$, appearing in \eqref{eq:VDMsystem1}
\begin{align}
&\left|(\Delta\av)_j \right|\overset{}{\lessapprox} \frac{\epsilon}{\delta^{\ell_t-1}}+\frac{\epsilon}{\delta^{2\ell_t-1}}+\frac{\epsilon}{\delta^{\ell_t-1}}\lessapprox\frac{\epsilon}{\delta^{2\ell_t-1}},\quad j\in [n], \label{eq:alphaeClust1}
\end{align}
where the term $\frac{\epsilon}{\delta^{2\ell_*-2}}$ in \eqref{eq:alphaeClust} (which is obtained from nodes not in $\mathcal{C}_t$) is replaced by the improved estimate $\frac{\epsilon}{\delta^{\ell_t-1}}$, which follows from Theorem \ref{Thm:ImprovedClust}, {\color{blue} by applying it to each of the clusters different from $\mathcal{C}_t$}. This finishes the proof of Theorem \ref{thm:coeffs-accuracy-Dima}.
\end{proof}

\begin{remark}\label{Rem:SingClus11}
For an isolated node $x_j$, Theorem \ref{Thm:ImprovedClust} implies that the upper bound on $\left|(\Delta\av)_j \right|$ in Remark \ref{eq:SingleNodeAmp} is replaced by $\left|(\Delta\av)_j \right| \lessapprox \epsilon$.
\end{remark}

\section{Stability in finite precision computations}\label{sec:finite-precision}
\newcommand{\hank}{\ensuremath{\mathcal{H}_n}}

So far, the results presented in Theorem \ref{thm:node-accuracy-Dima} and Theorem \ref{thm:coeffs-accuracy-Dima} analyze the performance and output of Prony's method subject to computations in exact arithmetic. In particular, we show that 
the forward errors $\left\{|\tilde{x}_j-x_j|\right\}_{j=1}^n$ and $\left\{|\tilde{\alpha}_j-\alpha_j|\right\}_{j=1}^n$ are bounded by the condition number of the problem (see Theorem \ref{thm:minmax}) multiplied by the noise level. Therefore, Prony's method achieves the theoretically optimal recovery bound for both the nodes and the amplitudes.

In this section we analyze the performance of Prony's method in the regime of finite-precision computations. In order to carry out such an analysis, we recall the definition of a backward error induced by a particular numerical algorithm. We define the normwise backward error in the spirit of approximation theory:
\begin{defn}\label{def:GenBackErr}
Let $f:\mathbb{C}^m \to \mathbb{C}^n$ be a given function. Fix $x\in \mathbb{C}^m$ and let $y\in \mathbb{C}^n$ be an approximation of $f(x)$. The backwards error in $y$ is defined as 
\begin{align*}
    \eta(y) = \inf\left\{\nu \ : \ y=f\left(x+\Delta x\right), \left| \Delta x\right|\leq \nu \left| x\right|  \right\}.
\end{align*}
Namely, the (normwise) backward error is the smallest relative error between $x$ and a perturbed $x+\Delta x$, which is mapped to $y$ under $f$.
\end{defn}
In the context of computations, one can think of $y$ in Definition \ref{def:GenBackErr} as obtained from $f(x)$ by round-off, due to finite-precision computation. A well-known approach in numerical analysis is to bound the backward errors committed by a particular numerical algorithm, and conclude that the actual (forward) error is bounded by the multiple of the backward error and the condition number \cite{higham1996}. A prototypical bound of the forward error 
for Definition \ref{def:GenBackErr} is then of the form
\begin{equation*}
    \frac{\left|y-f(x) \right|}{\left| f(x)\right|}\leq \operatorname{cond}(f,x)\eta(y) + O(\eta(y))^2
\end{equation*}
where $\operatorname{cond}(f,x)$ is an appropriate condition number associated with the function $f$ and the point $x$. Unfortunately, deriving a-priori bounds on the backward error is essentially difficult, and therefore it is frequently of interest to compute a bound numerically, given an output of a particular algorithm.

To concretize the discussion above for the context of Prony's method and Algorithm \ref{alg:classical-prony}, we introduce the following definition. {In what follows, for a vector $\vec{y}=\operatorname{col}\{y_i\}_{i=0}^N$ and $0\leq j < k \leq N$ two indices, we denote $\vec{y}[j:k]=\operatorname{col}\{y_i\}_{i=j}^k$.}

\begin{defn}\label{def:backward-errors}
Let $\vec{q}^{\circ} = \operatorname{col}\left\{ q^{\circ}_j\right\}_{j=0}^{n-1}$, $\vec{x}^{\circ}=\operatorname{col}\{x^{\circ}_j\}_{j=1}^n$ and $\vec{\alpha}^{\circ}= \operatorname{col}\left\{\alpha^{\circ}_j \right\}_{j=1}^n$ denote the results of Steps 2, 3 and 4, respectively, in a (finite-precision) numerical implementation of Prony's method (\prettyref{alg:classical-prony}). The backward errors for each of the steps are defined as follows (all inequalities of the form $|\vec{a}|\leq c$ where $\vec{a}$ is a vector or a matrix are to be interpreted component-wise):
\begin{itemize}
\item {\underline{Step 2:} Let $\tilde{\vec{m}} = \operatorname{col}\left\{\tilde{m}_j \right\}_{j=0}^{2n-1}$ be the vector of perturbed moments. For a vector $\vec{v}\in\mathbb{C}^{2n-1}$, let $\hank(\vec{v})\in\mathbb{C}^{n\times n}$ denote the Hankel matrix constructed from $\vec{v}$. We define, for $\vec{\hat{m}}=\operatorname{col}\{\hat{m}_j\}_{j=0}^{2n-1}$,
\begin{align}\label{eq:backward-error-q}
\operatorname{berr}_1(\vec{q}^{\circ};\tilde{\vec{m}}) &= \inf \biggl\{\epsilon_1: \hank(\vec{\hat{m}}[0:2n-2]) \cdot \vec{q}^{\circ} = -\vec{\hat{m}}[n:2n-1],\quad |\hat{\vec{m}}-\tilde{\vec{m}}|\leq \epsilon_1 \biggr\}
\end{align}
to be the backward error corresponding to the Hankel system
$$
\hank(\vec{\tilde{m}}[0:2n-2]) \cdot \operatorname{col}\left\{\tilde{q}_j \right\}_{j=0}^{n-1} = -\vec{\tilde{m}}[n:2n-1].
$$ }
\item \underline{Step 3:} Denote $\hat{\vec{q}} = \operatorname{col}\left\{\hat{q}_j \right\}_{j=0}^{n-1}$ and let
\begin{align}\label{eq:eq:backward-error-x}
\operatorname{berr}_2(\vec{x}^{\circ};\vec{q}^{\circ}) &= \inf\biggl\{\epsilon_2: (x^{\circ}_j)^n+\sum_{i=0}^{n-1} \hat{q}_i (x^{\circ}_j)^i=0\quad \forall j=1,\dots,n,\quad |\vec{q}^{\circ}-\vec{\hat{q}}|\leq\epsilon_2\biggr\}
\end{align}
be the backward error corresponding to the numerically obtained roots of the Prony polynomial.
\item {\underline{Step 4:} Denote $\vec{m^*}\in\mathbb{C}^{n}$ and $\vec{x}^{*}=\operatorname{col}\left\{x_j^* \right\}_{j=1}^n$. Then define
\begin{align}\label{eq:eq:backward-error-alpha}
\operatorname{berr}_3(\vec{\alpha}^{\circ};\vec{\tilde{m}}[0:n-1],\vec{x}^{\circ}) &= \inf\biggl\{\epsilon_3: V(\vec{x}^*)\vec{\alpha}^{\circ}=\vec{m}^*,\quad |\vec{x}^*-\vec{x}^{\circ}| \leq \epsilon_3,\; \bigl|\vec{m}^*-\vec{\tilde{m}}[0:n-1]\bigr|\leq\epsilon_3 \biggr\}. 
\end{align}
to be the backward error corresponding to solving the Vandermonde system for the amplitudes.}
\end{itemize}
\end{defn}

Our goal is to estimate the total backward error of Algorithm \ref{alg:classical-prony}, by aggregating over the backward errors of the steps above. To estimate the backward errors in practice, there are several available results in the literature, e.g. \cite{bartels1992a,higham1992,stetter2004,sun1998}. For the Hankel linear system structured backward error (when the right-hand side depends on a subset of the same parameters as the Hankel matrix), we have the following:
{\begin{prop}\label{Prop:berr1}
    Let $\vec{r}=\vec{r}(\vec{\tilde{m}},\vec{q}^{\circ})=\vec{\tilde{m}}[n:2n-1]+\hank(\vec{\tilde{m}}[0:2n-2]) \cdot \vec{q}^{\circ}\in\mathbb{C}^n$ denote the actual residual. Then
    $$
    \operatorname{berr}_1(\vec{q}^{\circ};\vec{\tilde{m}}) \lessapprox \min \{ \|\vec{\delta}\|_{2}: \vec{\delta}\in\mathbb{C}^{2n}, C_n \vec{\delta} = \vec{r} \} \leq \|C_n^{\dagger}\|_2 \|\vec{r}\|_2,
    $$
    where
    \begin{equation}\label{eq:Cmat}
        C_n(\vec{q}^{\circ})=C_n = \begin{bmatrix}
            q^{\circ}_0 & q^{\circ}_1 & \dots & q^{\circ}_{n-1} & 1 & 0 & \dots & 0\\
            0 & q^{\circ}_0 & q^{\circ}_1 & \dots & q^{\circ}_{n-1} & 1 & 0 \dots &  0\\
            \ddots & \ddots & \ddots\\
            0  & \dots & 0 & q^{\circ}_0 & q^{\circ}_1 & \dots & q^{\circ}_{n-1} & 1
        \end{bmatrix}\in\mathbb{C}^{n\times 2n}.
    \end{equation}
\end{prop}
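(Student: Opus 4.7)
The plan is to reduce the backward error characterization to a standard minimum-norm problem for the underdetermined linear system $C_n\vec{\delta}=-\vec{r}$, which then yields the claimed bounds.

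First, I would establish the key algebraic identity relating the perturbed Hankel system to the matrix $C_n$. Setting $\vec{\delta}:=\vec{\hat{m}}-\vec{\tilde{m}}\in\mathbb{C}^{2n}$ and expanding the perturbed equation
$$
\hank(\vec{\hat{m}}[0:2n-2])\cdot\vec{q}^{\circ}=-\vec{\hat{m}}[n:2n-1],
$$
and subtracting off the unperturbed residual $\vec{r}=\vec{\tilde{m}}[n:2n-1]+\hank(\vec{\tilde{m}}[0:2n-2])\cdot\vec{q}^{\circ}$, this condition becomes
$$
\hank(\vec{\delta}[0:2n-2])\cdot\vec{q}^{\circ}+\vec{\delta}[n:2n-1]=-\vec{r}.
$$
A direct index-by-index computation then shows that the left-hand side is exactly $C_n\vec{\delta}$, using that the $k$-th row of $C_n$ picks up $\sum_{i=0}^{n-1}q^{\circ}_i\delta_{k+i}+\delta_{k+n}$ for $k=0,\dots,n-1$. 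Thus admissible perturbations $\vec{\delta}$ are precisely those satisfying $C_n\vec{\delta}=-\vec{r}$.

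Second, I would convert the componentwise backward error in Definition \ref{def:backward-errors} into the 2-norm formulation used in the statement. Since the dimension $2n$ is fixed, the $\ell_\infty$ and $\ell_2$ norms are equivalent up to a universal constant depending only on $n$, so the infimum of $\|\vec{\delta}\|_\infty$ subject to $C_n\vec{\delta}=-\vec{r}$ agrees with the infimum of $\|\vec{\delta}\|_2$ up to such a constant. Because the proposition uses the asymptotic symbol $\lessapprox$, this constant is harmless, yielding
$$
\operatorname{berr}_1(\vec{q}^{\circ};\vec{\tilde{m}})\lessapprox\min\bigl\{\|\vec{\delta}\|_2:\vec{\delta}\in\mathbb{C}^{2n},\;C_n\vec{\delta}=-\vec{r}\bigr\}.
$$

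Third, to establish the final inequality $\|C_n^{\dagger}\|_2\|\vec{r}\|_2$, I would verify that $C_n\in\mathbb{C}^{n\times 2n}$ has full row rank $n$: inspecting columns $n,\dots,2n-1$ shows that this submatrix is lower triangular Toeplitz with unit diagonal, hence invertible. Therefore the Moore--Penrose pseudoinverse $C_n^{\dagger}$ exists, and the minimum $\ell_2$-norm solution of the underdetermined system $C_n\vec{\delta}=-\vec{r}$ is $\vec{\delta}^\star=-C_n^{\dagger}\vec{r}$, with $\|\vec{\delta}^\star\|_2\leq\|C_n^{\dagger}\|_2\|\vec{r}\|_2$. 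Combining with the previous step finishes the proof.

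The main subtlety is the componentwise-versus-normwise bookkeeping in the first step: one must check that replacing the componentwise constraint $|\hat{\vec{m}}-\vec{\tilde{m}}|\leq\epsilon_1$ by an $\ell_2$ constraint on $\vec{\delta}$ does not alter the scaling with respect to $\delta$ and $\epsilon$, which is immediate here only because $n$ is fixed throughout. The remaining steps are essentially standard linear algebra once the identification $C_n\vec{\delta}=-\vec{r}$ is made explicit.
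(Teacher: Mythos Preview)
Your proposal is correct and follows essentially the same approach as the paper: identify the perturbation constraint with the linear system $C_n\vec{\delta}=\pm\vec{r}$, then pass from the $\ell_\infty$ minimum to the $\ell_2$ minimum via norm equivalence in fixed dimension, and finally bound by $\|C_n^{\dagger}\|_2\|\vec{r}\|_2$. Your version is simply more explicit than the paper's (which cites \cite{higham1992} and sketches the same steps in two sentences); in particular your verification that $C_n$ has full row rank and your index-by-index check of $C_n\vec{\delta}=-\vec{r}$ are details the paper omits, and the sign discrepancy with the paper's $C_n\vec{\delta}=\vec{r}$ is immaterial for the norm bounds.
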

\begin{proof}
    Following \cite{higham1992}, denote $\vec{\delta}=\vec{\hat{m}}-\vec{\tilde{m}}$, then the constraint
    \begin{equation}\label{eq:hankel-constraint}
    \hank(\vec{\hat{m}}[0:2n-2]) \cdot \vec{q}^{\circ} = -\vec{\hat{m}}[n:2n-1]
    \end{equation}
     in \eqref{eq:backward-error-q} can be rewritten as $C_n\vec{\delta}=\vec{r}$. The exact value of the backward error is the solution to the underdetermined constrained minimization problem $\min  \{ \| \vec{\delta}\|_{\infty}: C_n\vec{\delta}=\vec{r} \}$, which can be, up to constants, bounded by the solution in the 2-norm.
\end{proof}
}

The backward error $\operatorname{berr}_2(\vec{x}^{\circ};\vec{q}^{\circ})$, given in \eqref{eq:eq:backward-error-x}, can be easily computed as follows: since the values of the roots in finite-precision, $\{x^{\circ}_j\}_{j=1}^n$, are known, the coefficients $\hat{\vec{q}} = \operatorname{col}\left\{\hat{q}_j \right\}_{j=0}^{n-1}$ of the corresponding polynomial can be computed explicitly, followed by direct evaluation of the norm $\left\|\vec{q}^{\circ}-\hat{\vec{q}} \right\|_{\infty}$.

As for the structured backward Vandermonde error, we have the following.

\begin{prop}
    $\operatorname{berr}_3(\vec{\alpha}^{\circ};{\vec{\tilde{m}}[0:n-1]},\vec{x}^{\circ})$ can be bounded using the following steps:
    \begin{enumerate}
        \item Compute the linearized backward error
        $$
        \operatorname{berr_3^{lin}}(\vec{\alpha}^{\circ};{\vec{\tilde{m}}[0:n-1]},\vec{x}^{\circ}) = \min \biggl\{\|(\vec{\delta_1},\vec{\delta_2})\|_{2}:  \underbrace{\bigl[ V'(\vec{x}^{\circ}) \operatorname{diag}(\vec{\alpha}^{\circ}) \quad -I_{n\times n}\bigr]}_{:=U}  (\vec{\delta_1},\vec{\delta_2})^T = \vec{r}\biggr\},
        $$
        where, as before, $\vec{r}={\vec{\tilde{m}}[0:n-1]}-V(\vec{x}^{\circ}) \vec{\alpha}^{\circ}$ and $V' = \big[{(x_j^{\circ})}^k \quad k{(x_j^{\circ})}^{k-1} \big]^{j=1,\dots,n}_{k=0,\dots,n-1}$ is the confluent Vandermonde matrix. The solution is given by $(\vec{\delta_1},\vec{\delta_2})^T = U^{\dagger}\vec{r}$.
        \item Substitute the obtained minimal perturbation in $\vec{x}$ and obtain an actual bound for the right-hand side perturbation:
        $$
        \vec{\delta_2}'=V(\vec{x}^{\circ}+\vec{\delta_1})\vec{\alpha}^{\circ}-{\vec{\tilde{m}}[0:n-1]}.
        $$
        \item Take the bound for the actual backward error to be the maximum between $\|\vec{\delta_1}\|,\;\|\vec{\delta_2}\|,\;\|\vec{\delta_2}'\|$.
    \end{enumerate}
\end{prop}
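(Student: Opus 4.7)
The plan is to interpret the three-step procedure as constructing an \emph{admissible} perturbation pair in the infimum defining $\operatorname{berr}_3$, and then to bound its componentwise size by the quantity in the statement. All three steps need justification: Step 1 produces a candidate via a linearization that is in general inexact; Step 2 restores the exact constraint at the cost of slightly perturbing $\vec{\delta_2}$; Step 3 converts the pair into a single scalar $\epsilon_3$ satisfying both componentwise constraints.

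First I would perform a first-order Taylor expansion of the constraint $V(\vec{x}^*)\vec{\alpha}^{\circ}=\vec{m}^*$ around $\vec{x}^{\circ}$, with $\vec{x}^*=\vec{x}^{\circ}+\vec{\delta_1}$ and $\vec{m}^*=\vec{\tilde{m}}[0:n-1]+\vec{\delta_2}$. Since the $(k,j)$ entry of $V$ is $(x_j)^k$ and depends only on the $j$-th node, the column-wise derivative yields
\begin{equation*}
V(\vec{x}^{\circ}+\vec{\delta_1})=V(\vec{x}^{\circ})+V'(\vec{x}^{\circ})\operatorname{diag}(\vec{\delta_1})+O(\|\vec{\delta_1}\|_2^2),
\end{equation*}
with $V'$ the confluent Vandermonde matrix appearing in the statement. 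Multiplying by $\vec{\alpha}^{\circ}$ on the right and using $\operatorname{diag}(\vec{\delta_1})\vec{\alpha}^{\circ}=\operatorname{diag}(\vec{\alpha}^{\circ})\vec{\delta_1}$, the constraint becomes, up to $O(\|\vec{\delta_1}\|_2^2)$ terms, the underdetermined linear system $U(\vec{\delta_1},\vec{\delta_2})^T=\vec{r}$ for the residual $\vec{r}=\vec{\tilde{m}}[0:n-1]-V(\vec{x}^{\circ})\vec{\alpha}^{\circ}$. Because $U$ contains the block $-I_{n\times n}$ it has full row rank, so the system is consistent and its minimum $\ell_2$-norm solution is $U^{\dagger}\vec{r}$, justifying Step 1.

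Next, the pair $(\vec{\delta_1},\vec{\delta_2})$ satisfies only the linearized constraint, not the exact one. To convert it into a genuinely admissible perturbation, I keep $\vec{\delta_1}$ and redefine the right-hand side correction as $\vec{\delta_2}':=V(\vec{x}^{\circ}+\vec{\delta_1})\vec{\alpha}^{\circ}-\vec{\tilde{m}}[0:n-1]$. By construction $V(\vec{x}^{\circ}+\vec{\delta_1})\vec{\alpha}^{\circ}=\vec{\tilde{m}}[0:n-1]+\vec{\delta_2}'$ holds exactly, so the pair $(\vec{x}^{\circ}+\vec{\delta_1},\,\vec{\tilde{m}}[0:n-1]+\vec{\delta_2}')$ is admissible in the infimum \eqref{eq:eq:backward-error-alpha}. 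This takes care of Step 2.

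Finally, since the definition of $\operatorname{berr}_3$ demands a scalar $\epsilon_3$ dominating the componentwise perturbations of both $\vec{x}$ and $\vec{m}$, and $\|\cdot\|_{\infty}\le\|\cdot\|_2$, any quantity at least $\max(\|\vec{\delta_1}\|_2,\|\vec{\delta_2}'\|_2)$ is a valid upper bound on $\operatorname{berr}_3$; including $\|\vec{\delta_2}\|_2$ in the maximum only enlarges the bound and so preserves validity, serving as a safeguard in case cancellation makes $\vec{\delta_2}'$ anomalously small relative to the linearized estimate. The main technical subtlety is the role of the neglected $O(\|\vec{\delta_1}\|_2^2)$ terms in the linearization: they do not spoil the argument because Step 2 absorbs them into $\vec{\delta_2}'$ exactly, so the procedure produces a rigorous upper bound regardless of the size of the nonlinearity; the only place where conditioning of $U$ — and hence of $V'(\vec{x}^{\circ})\operatorname{diag}(\vec{\alpha}^{\circ})$ — enters is as a heuristic for the tightness of the bound in the clustered super-resolution regime, where one should monitor it carefully but the validity of the estimate is unaffected.
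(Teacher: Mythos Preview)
Your argument is correct and in fact supplies the details that the paper omits: the paper's own proof consists solely of a citation to \cite{bartels1992a} together with the remark that the $\infty$-norm backward error can be bounded by the $2$-norm solution. Your Taylor expansion, the full-row-rank observation for $U$ via the $-I_{n\times n}$ block, and the absorption of the higher-order remainder into $\vec{\delta_2}'$ are exactly the mechanism behind that reference, and your final $\|\cdot\|_\infty\le\|\cdot\|_2$ step is precisely the norm-equivalence remark the paper invokes.
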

\begin{proof}
See p.26 in \cite{bartels1992a}. The $\infty$-norm estimates can again be bounded by the solution in the Euclidean norm.
\end{proof}

\begin{thm}\label{thm:finite-precision}
Suppose that the numerical algorithms in steps 2, 3 and 4 of \prettyref{alg:classical-prony} are \emph{backward stable}, i.e. the backward errors \eqref{eq:backward-error-q}, \eqref{eq:eq:backward-error-x} and \eqref{eq:eq:backward-error-alpha} are on the order of machine epsilon $\epsilon_M$, and also that $\epsilon_M \lessapprox \epsilon$. Further assume that for \eqref{eq:Cmat}, it holds that  $\|C_n^{\dagger}(\vec{q}^{\circ})\|_2 = O(1)$ (i.e. independent of the minimal separation $\delta$). Then the bounds of \prettyref{thm:node-accuracy-Dima} and \prettyref{thm:coeffs-accuracy-Dima} hold for $\{x^{\circ}_j\}_{j=1}^n$ and $\{\alpha^{\circ}_j\}_{j=1}^n$ in place of $\{\tilde{x}_j\}_{j=1}^n$ and $\{\tilde{\alpha}_j\}_{j=1}^n$.
\end{thm}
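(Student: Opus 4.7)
The strategy is to reduce the finite-precision execution of \prettyref{alg:classical-prony} to an exact-arithmetic one on suitably perturbed input data, by aggregating the backward errors of the three numerical sub-routines. Combined with $\epsilon_M \lessapprox \epsilon$, this will realize the computed $\{x_j^{\circ}\}$ as the exact Prony nodes of a moment sequence of the form $\vec{m}+\epsilon\vec{d}^{\star}$ with $\vec{d}^{\star}=O(1)$, so that \prettyref{thm:node-accuracy-Dima} and \prettyref{thm:coeffs-accuracy-Dima} apply verbatim. The whole argument rests on the fact that machine-epsilon-level absolute errors in intermediate vectors can be translated into machine-epsilon-level absolute moment perturbations, despite the problem being ill-conditioned in a relative sense.

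First I would absorb Steps 2 and 3 into an equivalent moment perturbation. By the backward stability of Step 2 and \prettyref{Prop:berr1}, there exist moments $\hat{\vec m}^{(1)}$ with $|\hat{\vec m}^{(1)}-\tilde{\vec m}|\leq O(\epsilon_M)$ for which $\vec{q}^{\circ}$ is the \emph{exact} Hankel solution; writing $\hat{\vec m}^{(1)}=\vec{m}+\epsilon\vec{d}^{(1)}$ and using $\epsilon_M \lessapprox \epsilon$ gives $\vec{d}^{(1)}=O(1)$. Step 3 then yields a polynomial $\hat{q}(z)=z^n+\sum_{i=0}^{n-1}\hat{q}_i z^i$ with $|\hat{\vec q}-\vec{q}^{\circ}|\leq O(\epsilon_M)$ whose exact roots are $\{x_j^{\circ}\}$. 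The key technical step is to realize $\hat{q}$ itself as the exact Prony polynomial of moments close to $\vec{m}$. I would seek $\hat{\vec m}^{(2)}=\hat{\vec m}^{(1)}+\vec{\delta}$ satisfying the annihilator identity $C_n(\hat{\vec q})\hat{\vec m}^{(2)}=0$; subtracting $C_n(\vec{q}^{\circ})\hat{\vec m}^{(1)}=0$ reduces this to
$$
C_n(\hat{\vec q})\vec{\delta} = -C_n\bigl(\hat{\vec q}-\vec{q}^{\circ}\bigr)\hat{\vec m}^{(1)}.
$$
Since $\|\hat{\vec m}^{(1)}\|_{\infty}=O(1)$ (moments of a bounded-amplitude measure on $\mathbb{S}^1$), the right-hand side has norm $O(\epsilon_M)$, and the hypothesis $\|C_n^{\dagger}\|_2=O(1)$ (which also holds for the $O(\epsilon_M)$-perturbed $C_n(\hat{\vec q})$) furnishes $\|\vec{\delta}\|=O(\epsilon_M)$. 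Because $H_n$ has a non-degenerate Vandermonde factorization, $\hat{H}(\hat{\vec m}^{(2)})$ remains nonsingular under the $O(\epsilon)$ perturbation, so $\hat{q}$ is the minimal Prony polynomial for $\hat{\vec m}^{(2)}$. Altogether, $\hat{\vec m}^{(2)}=\vec{m}+\epsilon\vec{d}^{(2)}$ with $\vec{d}^{(2)}=O(1)$, and \prettyref{thm:node-accuracy-Dima} applied to the exact Prony outputs for $\hat{\vec m}^{(2)}$ yields $|x_j^{\circ}-x_j|\lessapprox \delta^{2-2\ell_j}\epsilon$.

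For the amplitudes, Step 4's backward stability supplies nodes $\vec{x}^*$ and right-hand side $\vec{m}^*$ with $|\vec{x}^*-\vec{x}^{\circ}|,\ |\vec{m}^*-\tilde{\vec m}[0:n-1]|\leq O(\epsilon_M)$, for which $\vec{\alpha}^{\circ}$ satisfies $V(\vec{x}^*)\vec{\alpha}^{\circ}=\vec{m}^*$ exactly. Since the elementary symmetric polynomials of $\vec{x}^*$ differ from those of $\vec{x}^{\circ}$ by $O(\epsilon_M)$, the monic polynomial $\hat{q}^{*}$ with roots $\vec{x}^*$ satisfies $|\hat{\vec q}^{*}-\hat{\vec q}|=O(\epsilon_M)$; reapplying the $C_n^{\dagger}$ argument from the previous paragraph produces $\hat{\vec m}^{(3)}=\vec{m}+\epsilon\vec{d}^{(3)}$ with $\vec{d}^{(3)}=O(1)$ whose exact Prony nodes are $\vec{x}^*$, while $\vec{m}^{*}=\vec{m}[0:n-1]+\epsilon\breve{\vec d}$ with $\breve{\vec d}=O(1)$. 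Because \prettyref{thm:coeffs-accuracy-Dima} permits the two tolerance sequences $\{d_i\}$ and $\{\breve{d}_i\}$ to be chosen independently, invoking it with $(\vec{d}^{(3)},\breve{\vec d})$ produces the stated bound on $|\alpha_j^{\circ}-\alpha_j|$.

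The main obstacle is the Step-3 absorption just described: a generic $O(\epsilon_M)$ perturbation of the coefficients of $\vec{q}^{\circ}$ need \emph{not} correspond to an $O(\epsilon_M)$ perturbation of the moments, since the moment-to-coefficient map has condition number of order $\kappa(H_n)\asymp \delta^{2-2\ell_*}\gg 1$ in the clustered regime. The hypothesis $\|C_n^{\dagger}\|_2=O(1)$ is precisely what circumvents this obstruction, encoding a mild non-degeneracy of the backward-stable Prony polynomial (namely, that it is not close to a polynomial with a common factor of high multiplicity); this property is borne out numerically in \prettyref{sec:numerics}, but a deterministic verification of it would be the natural direction for a fully unconditional version of the theorem.
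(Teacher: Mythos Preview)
Your proposal is correct and follows essentially the same approach as the paper: aggregate the three backward errors into a single $O(\epsilon_M)$ moment perturbation via the $C_n^{\dagger}$ bound of \prettyref{Prop:berr1}, then invoke Theorems~\ref{thm:node-accuracy-Dima} and~\ref{thm:coeffs-accuracy-Dima} on the resulting exact Prony problem. Your write-up is in fact more explicit than the paper's about the key Step-3 absorption, namely the identity $C_n(\hat{\vec q})\vec{\delta}=-C_n(\hat{\vec q}-\vec{q}^{\circ})\hat{\vec m}^{(1)}$, which is precisely the place where the hypothesis $\|C_n^{\dagger}\|_2=O(1)$ does its work.
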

\begin{proof}
By backward stability of Step 2, $\vec{q}^{\circ}$ is the exact solution to the \emph{Hankel} system 
% {\color{blue}\sout{$\hat{H}_n \vec{q}^{\circ} = -\vec{\hat{m}}$}}
$$
{\hank(\vec{\hat{m}}[0:2n-2]) \cdot \vec{q}^{\circ} = -\vec{\hat{m}}[n:2n-1]}
$$
with moment vector {$\vec{\hat{m}}\in\mathbb{C}^{2n}$} such that 
% {\color{blue}\sout{$\left\| \hat{H}_n-\tilde{H}_n\right\|$},}
$\|\vec{\hat{m}}-\vec{\tilde{m}}\|=O(\epsilon_M)$. By backward stability of Step 3 (root finding), $\vec{x}^{\circ}$ are the exact roots of the polynomial $\hat{q}(z)$ such that $\|\vec{q}^{\circ}-\vec{\hat{q}}\|=O(\epsilon_M)$. Using Proposition \ref{Prop:berr1} {and \eqref{eq:hankel-constraint}}, we have
\begin{align*}
\operatorname{berr}_1(\vec{q}^{\circ},\vec{\tilde{m}})&\lessapprox \|C_n(\vec{q}^{\circ})^{\dagger}\|_2 { \|\vec{\tilde{m}}[n:2n-1]+\hank(\vec{\tilde{m}}[0:2n-2]) \cdot \vec{q}^{\circ}\|_2}\\
&\leq \|C_n(\vec{q}^{\circ})^{\dagger}\|_2 {\| \left(\tilde{\vec{m}}-\hat{\vec{m}} \right)[n:2n-1]+\left[\hank(\vec{\tilde{m}}[0:2n-2]) -\hank(\vec{\hat{m}}[0:2n-2]) \right]\vec{q}^{\circ}\|_2}\\
& = O(\epsilon_M).   
\end{align*}
Since $\epsilon_M\lessapprox \epsilon$, we conclude that $\vec{x}^{\circ}$ are the exact roots of a polynomial $\bar{q}(\vec{m}^{\circ};z)$ {where $\vec{m^{\circ}}\in\mathbb{C}^{2n}$} with $\|\vec{m}^{\circ}-\vec{\tilde{m}}\|\leq \| \vec{m}^{\circ}-\vec{\hat{m}}\|+\|\vec{\hat{m}}-\vec{\tilde{m}}\|=O(\epsilon_M)$. The latter implies that the bounds for $|x^{\circ}_j-x_j|$ hold as specified in 
\prettyref{thm:node-accuracy-Dima}. By backward stability of Step 4, there exist $\vec{x}^*$ and $\vec{m}^*$ such that $\alpha^{\circ}$ is the exact solution of the Vandermonde system $V(\vec{x}^*)\vec{\alpha}^{\circ}=\vec{m}^*$ with $\vec{x}^*-\vec{x}^{\circ}=O(\epsilon_M)$ and $\|\vec{m}^*-{\vec{\tilde{m}}[0:n-1]}\|=O(\epsilon_M)$. Clearly $\vec{x}^*$ are the exact roots of a polynomial with coefficients $\vec{q}^*$ satisfying $\|\vec{q}^*-\hat{\vec{q}}\|=O(\epsilon_M)$. But then we also have $\|\vec{q}^*-\vec{q}^{\circ}\|=O(\epsilon_M)$ and by the same computation as above we conclude that $\vec{x}^*$ are the exact roots of a polynomial { $\bar{q}(\vec{\breve{m}};z)$ with $\|\vec{\breve{m}}-\vec{\tilde{m}}\|=O(\epsilon_M)$.} Applying \prettyref{thm:coeffs-accuracy-Dima} completes the proof.
\end{proof}

\section{Numerical results}\label{sec:numerics}

The numerical performance of both PM and DPM has already been investigated in \cite{decimatedProny}, suggesting their optimality in the corresponding regime (respectively, as either $\delta\ll {1\over \Omega}$ or $\srf \gg 1$ {where $\srf:={1\over{\Omega\delta}}$}). In particular, the results reported in \cite{decimatedProny} confirm the predictions of \prettyref{thm:node-accuracy-Dima} and \prettyref{thm:coeffs-accuracy-Dima}. Numerical results in the multi-cluster setting for additional SR algorithms such as ESPRIT, MUSIC and Matrix Pencil are available in, e.g., \cite{li2020a,li2021,batenkov2021b}. Here we complement the experiments in \cite{decimatedProny} by computing the backward errors in each step of PM (cf. \prettyref{thm:finite-precision}), implying that PM attains the optimal bounds in finite-precision arithmetic as well (\prettyref{fig:vanilla-nonproj-DP-backward}).

In all experiments, we consider a clustered configuration with $n=3$ nodes, where node $j=3$ is isolated, and construct the signal with random complex amplitudes (as in the model \eqref{eq:ft-of-measure}), while adding random bounded perturbations (noise) to the measurements. We measure the actual error amplification factors i.e., the  condition numbers of this problem) of the nodes and amplitudes (cf. \cite[Algorithm 3.3]{batenkov2021b}):
$$\mathcal{K}_{x,j}:=\epsilon^{-1}\Omega|x_j-\tilde{x}_j|, \quad  \mathcal{K}_{\alpha,j}:=\epsilon^{-1}|\alpha_j-\tilde{\alpha}_j|,$$
choosing $\epsilon,  \delta$ at random from a pre-defined range. Then, reconstruction of the signal is performed using one of three methods:  classical Prony Method, Decimated Prony Method and Matrix Pencil. For each method, we compare the scaling of the condition numbers in two scenarios: projecting the recovered nodes on the unit circle prior to recovering
the amplitudes versus non-projecting them. All computations were done in double-precision floating-point arithmetic.

As evident from \prettyref{fig:prony-initial-check} (middle pane), the correlations between the errors in the coefficients of the Prony polynomial $\bar{q}(z)$ are essential for obtaining the correct asymptotics for the errors in $\{x_j\}_{j=1}^n$, as done in the proof of \prettyref{thm:node-accuracy-Dima}. On the other hand, \prettyref{fig:prony-initial-check} (right pane) does not appear to suggest that the correlations between the errors in $\{x_j\}_{j=1}^n$ have any influence on the accuracy of recovering the $\alpha_j$'s. In hindsight, the reason is clear: the proof of the estimates \eqref{eq:alphaeClust} (or \eqref{eq:alphaeClustIsolated}) does not require any correlations between the different errors. In contrast, the improved analysis in \prettyref{sec:impBound} uses these correlations in an essential way via the expression $\sum_{\nu=1}^{\ell_1} \alpha_{{\nu}} \biggl[ \prod_{r\in [\ell_1]\setminus\{\nu\}} (x_{r}-\tilde{x}_j) \biggr] \bar{q}(x_{{\nu}})$, resulting in the improved bound \eqref{eq:alphaeClust1}. Thus, if we were to perturb the recovered nodes in random directions, Theorem~\ref{Thm:ImprovedClust} would no longer be valid, and the bound \eqref{eq:alphaeClust}  would be the ``next best thing''. It turns out that a simple {\bf projection of the complex nodes $\{\tilde{x}_j\}_{j=1}^n$ to the unit circle prior to recovering the amplitudes} $\alpha_j$ provides the required perturbation -- \prettyref{fig:discrepancy-DP} demonstrates the loss of accuracy of the non-cluster node $j=3$ when projecting all nodes. Here we also plot the normalized ``cluster discrepancy'' $\discr_1/\epsilon$ given by \eqref{eq:ContribCc} measuring the influcence between the different clusters, which should scale according to either \eqref{eq:TildVVBdClus} (second estimate) in the projected case, or according to Theorem~\ref{Thm:ImprovedClust} in the non-projected case. Note that the multi-cluster geometry is essential to observe such a behavior.

\begin{figure}
    \includegraphics[width=0.9\linewidth]{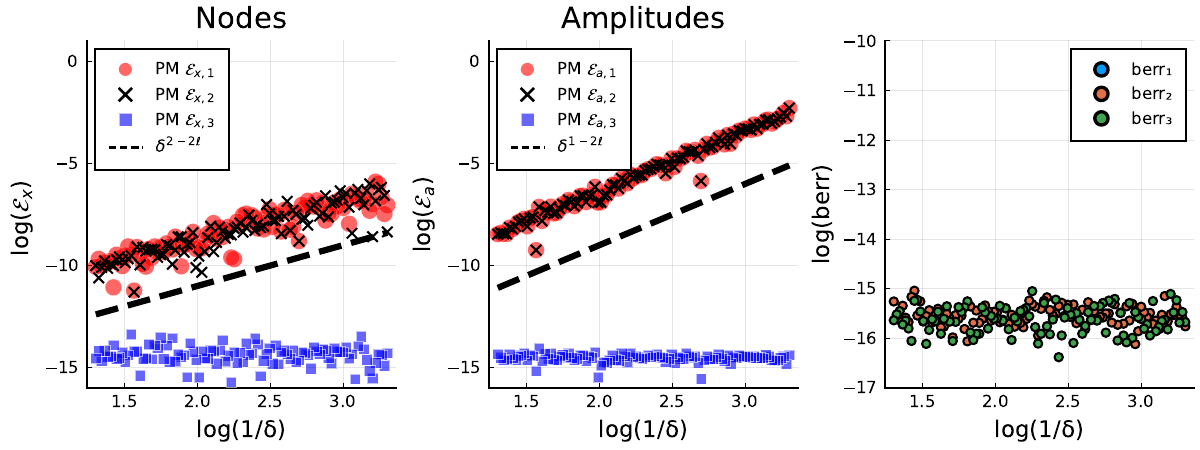}
    \caption{{\small Classical Prony method - asymptotic optimality. For cluster nodes $j=1,2$, the node errors $\mathcal{E}_{x,j}=|\tilde{x}_j-x_j|$ (left) scale is like $\delta^{2-2\ell}$, while the amplitude errors $\mathcal{E}_{a,j}=|\tilde{\alpha}_j-\alpha_j|$ (middle) scale like $\delta^{1-2\ell}$. For the non-cluster node $j=3$, both errors are bounded by a constant. Right: backward errors of each step, as specified in \prettyref{def:backward-errors}, are on the order of machine epsilon, implying numerical stability of PM according to \prettyref{thm:finite-precision}. Here $\epsilon=10^{-15}$ in all experiments.}}
    \label{fig:vanilla-nonproj-DP-backward}
\end{figure}

\begin{figure}
    \includegraphics[width=0.9\linewidth]{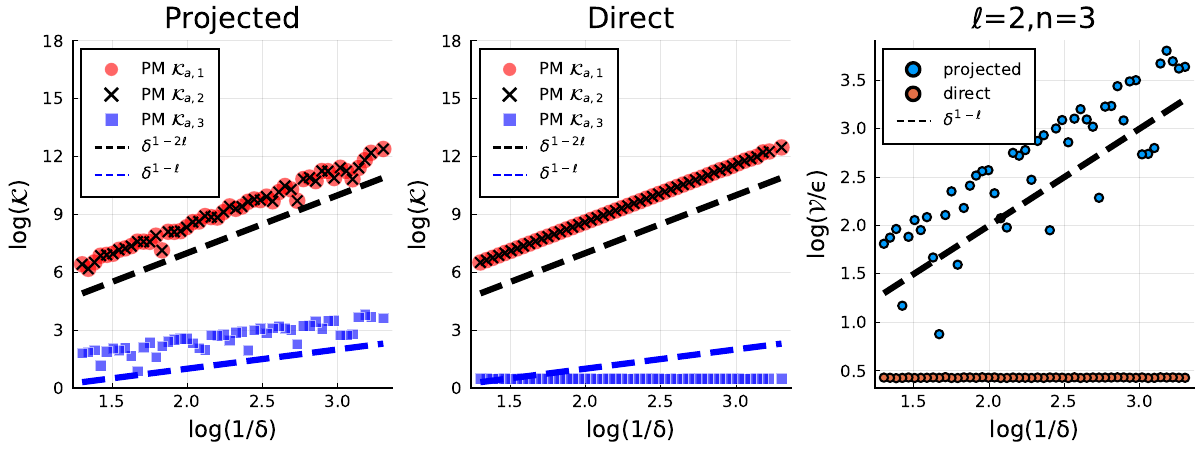}
    \caption{Prony's method: accuracy of amplitude recovery where the nodes are projected (left) or non-projected (center) prior to recovering the amplitudes. Right panel: the corresponding normalized amplitude discrepancy function $\discr_1/\epsilon$ (see text). The results are consistent with the estimates \eqref{eq:TildVVBdClus} (projected) and Theorem~\ref{Thm:ImprovedClust} (non-projected).}
    \label{fig:discrepancy-DP}
\end{figure}

Interestingly, the cancellation phenomenon just demonstrated appears in other methods for SR which are based on decoupling the recovery of $\{x_j\}_{j=1}^n$ from that of $\{\alpha_j\}_{j=1}^n$. Performing the same projection perturbation, the loss of accuracy can be seen for Decimated Prony method (DPM, \prettyref{fig:DPM-merged}) and also the Matrix Pencil (\prettyref{fig:MP-merged}). In all the above, if we project the nodes before computing the amplitudes, then the amplitudes accuracy will deteriorate according to the estimate \eqref{eq:alphaeClust}. {\bf Thus, non-projecting the nodes is crucial for maintaining the accuracy of non-cluster amplitudes.} While this phenomenon is perhaps to be expected for DPM which still relies on PM, Matrix Pencil entails an eigenvalue decomposition, and therefore it is a-priori, not obvious that cancellations should occur also there. We believe our insights can help towards a complete analysis of Matrix Pencil and related methods in the multi-cluster geometry.

\begin{figure}
    \includegraphics[width=0.98\linewidth]{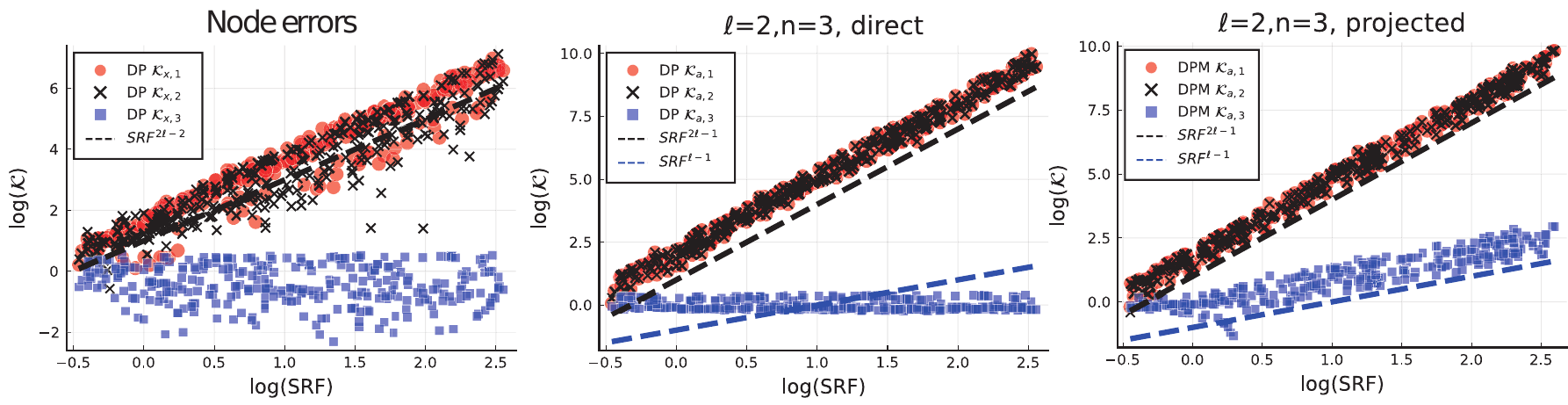}
    %\caption{Decimated Prony Method - projected nodes. Script name: \texttt{amplitude\_play.ipynb}}
    \caption{\small Decimated Prony Method - asymptotic optimality. For cluster nodes $j=1,2$, the node amplification factors $\mathcal{K}_x$ (left) scale like $\srf^{2\ell-2}$, and for the non-cluster node ($j=3$) it is bounded by a constant. The amplitude error amplification factors $\mathcal{K}_a$ for the non-cluster node with no projection (middle) are bounded by a constant while with projection (right) they scale like $\srf^{\ell-1}$. For the cluster nodes, both amplitude error amplification factors scale like $\srf^{2\ell-1}$.}
    \label{fig:DPM-merged}
\end{figure}

\begin{figure}
    \includegraphics[width=0.98\linewidth]{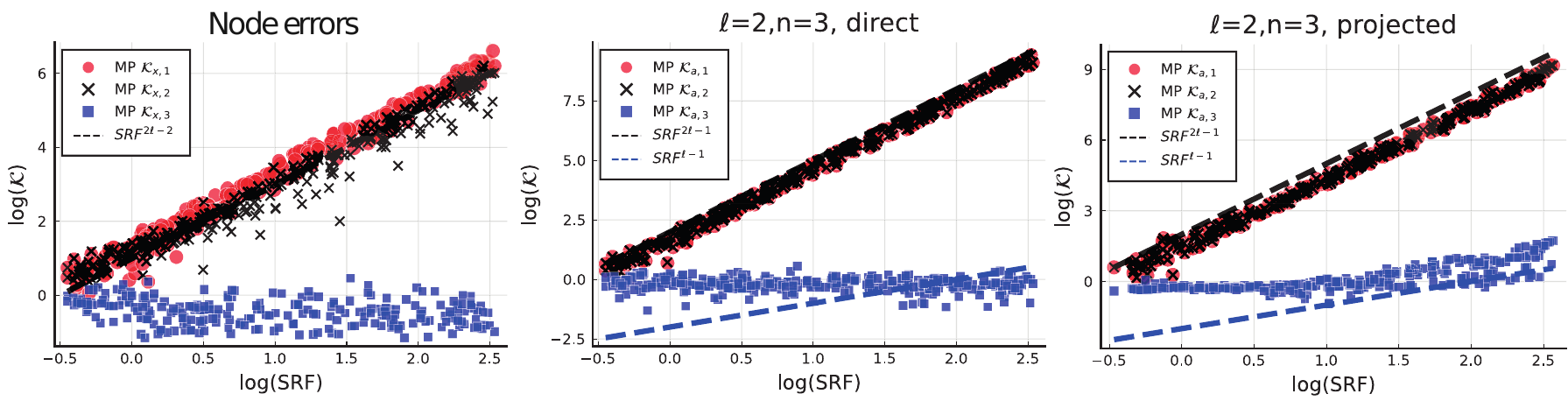}
    %\caption{Decimated Prony Method - projected nodes. Script name: \texttt{amplitude\_play.ipynb}}
    \caption{\small  Matrix Pencil's asymptotic optimality. For cluster nodes, the node amplification factors $\mathcal{K}_x$ (left) scale like $\srf^{2\ell-2}$, and for the non-cluster node (j=3) it is bounded by a constant. The amplitude error amplification factors $\mathcal{K}_a$ for the non-cluster node with no projection (middle) are bounded by a constant while with projection (right) they scale like $\srf^{\ell-1}$ for large enough $\srf$. For the cluster nodes, both amplitude error amplification factors scale like $\srf^{2\ell-1}$.}
    \label{fig:MP-merged}
\end{figure}

\newpage
%\bibliographystyle{plain}
%\bibliography{prony-cancellations}
\printbibliography

\appendix

\section{Proofs - Section \ref{Sec:Prelims}}

\subsection{Proof of Lemma \ref{Lem:HomogPronyPoly}}\label{Lem:HomogPronyPolyPf}

Expanding \eqref{eq:HomogPronyPol} via the first row we have 
\begin{align}
& \bar{p}(z) = (-1)^n\operatorname{det}\left(H_n \right)\left(z^n+(-1)z^{n-1}\operatorname{det}\left(H_n \right)^{-1} \operatorname{det}\begin{bmatrix}
m_0 & m_1 & \dots &m_{n-2}& m_n\\
\vdots & \vdots & \vdots & \vdots & \vdots\\
m_{n-1} & m_n & \dots & m_{2n-3} & m_{2n-1}
 \end{bmatrix}\right.\label{eq:pbar}\\
 &\hspace{12mm} \left.+\dots +(-1)^n \operatorname{det}\left(H_n \right)^{-1}\begin{bmatrix}
m_1 & m_2 & \dots &m_{n}\\
\vdots & \vdots & \vdots & \vdots\\
m_{n} & m_{n+1} & \dots & m_{2n-1}
 \end{bmatrix}\right)\nonumber.
\end{align}
Recall the system \eqref{eq:UnpertHanekl}. Applying Craemer's rule we get 
\begin{align}
& p_0 = (-1)^n\operatorname{det}\left(H_n \right)^{-1}\operatorname{det}\begin{bmatrix}
m_1 & m_1 & \dots &m_{n}\\
\vdots & \vdots & \vdots & \vdots\\
m_{n} & m_n & \dots & m_{2n-1}  \end{bmatrix},\nonumber \\
& \hspace{5cm} \vdots \label{eq:Craemer}\\
&p_{n-1} = (-1)\operatorname{det}\left(H_n \right)^{-1}\operatorname{det}\begin{bmatrix}
m_0 & m_1 & \dots &m_{n-2}& m_n\\
\vdots & \vdots & \vdots & \vdots & \vdots\\
m_{n-1} & m_n & \dots & m_{2n-3} & m_{2n-1}
 \end{bmatrix}.\nonumber
\end{align}
Substituting \eqref{eq:Craemer} into \eqref{eq:pbar} leads to \eqref{eq:HomogPronyPol1}. Identical computation for $\bar{q}(z)$ shows \eqref{eq:HomogPronyPol2}.

\subsection{Proof of Lemma \ref{lem:qbarAsymp}}\label{lem:qbarAsympPf}
{ Recall that the coefficient of $\epsilon^{\kappa}$ in \eqref{eq:CompundExpansion} is given by
\begin{equation*}
\theta_{n+1-\kappa}(z) = \operatorname{tr}\left( \operatorname{adj}_{n+1-\kappa}(D) C_{n+1-\kappa}(G(z))\right).
\end{equation*}
In the case $\kappa = 0$, we have $C_{n+1}(G(z)) = \operatorname{det}\left(G(z) \right)= \bar{p}(z)$, whereas $D\in \mathbb{R}^{(n+1)\times (n+1)}$ implies, by definition, that $\operatorname{adj}_{n+1}(D)=1$. Therefore, $\theta_{n+1}(z)=\bar{p}(z)$. In the case $\kappa=n+1$, we have, by definition, $\operatorname{adj}_{0}(D)=\operatorname{det}(D)$ and $C_0(G(z)) = 1$, whence $\theta_0(z) = \operatorname{det}(D)$, as claimed.

Consider now the case $1\leq \kappa \leq n$. By definition, we have 
\begin{equation*}
\begin{array}{lll}
&\operatorname{tr}\left( \operatorname{adj}_{n+1-\kappa}(D) C_{n+1-\kappa}(G(z))\right) \vspace{0.1cm}\\
&\hspace{10mm}= \sum_{\gamma \in \mathcal{Q}^{n+1}_{n+1-\kappa}}\sum_{\beta \in \mathcal{Q}^{n+1}_{n+1-\kappa}}\left[\operatorname{adj}_{n+1-\kappa}(D) \right]_{\gamma, \beta} \left[C_{n+1-\kappa}(G(z)) \right]_{\beta, \gamma}.
\end{array}
\end{equation*}
Fix any $\gamma,\beta \in \mathcal{Q}^{n+1}_{n+1-\kappa}$. Then,
\begin{equation*}
\left[\operatorname{adj}_{n+1-\kappa}(D) \right]_{\gamma, \beta} = (-1)^{\chi(\gamma,\beta)}\operatorname{det}\left(D\left[\beta^c|\gamma^c \right]\right).
\end{equation*}
where $\chi(\gamma,\beta) = \sum_{i\in \gamma}i+\sum_{j \in \beta}j$ and $D\left[\beta^c|\gamma^c \right]$ is the matrix obtained from $D$ by selecting the rows \emph{not} in $\beta$ and columns \emph{not} in $\gamma$. Recall the structure of $D$ in \eqref{PronPert1} and note that $1\notin \beta$ implies that $D\left[\beta^c|\gamma^c \right]$ has a first row of zeros, whence $\operatorname{det}\left(D\left[\beta^c|\gamma^c \right]\right)=0$. Therefore, in suffices to consider only $\beta \in \mathcal{Q}_{n+1-\kappa}$ such that $1\in \beta$. In the case $\kappa = n$, we thus obtain $\beta = (1)$ and $\gamma = (i), \ 1\leq i\leq n+1$, whence 
\begin{equation*}
\begin{array}{lll}
\theta_1(z) &= \sum_{i=1}^{n+1}\left[\operatorname{adj}_{1}(D) \right]_{i, 1} \left[C_{1}(G(z)) \right]_{1, i}= \sum_{i=1}^{n+1}\left[\operatorname{adj}(D) \right]_{i, 1}\left[G(z) \right]_{1, i}\\
& = \sum_{i=1}^{n+1}\left[\operatorname{adj}(D) \right]_{i, 1}z^{i-1}
\end{array}
\end{equation*}
as required. Finally, consider the case $1\leq \kappa \leq n-1$. Recall the notation \eqref{eq:GammaParam} corresponding to each $ \gamma = \left(i_1,\dots,i_{n+1-\kappa} \right)$ and $ \beta = \left(1,j_1,\dots,j_{n-\kappa} \right)$. By definition of the multiplicative compound {\color{blue} and the notations \eqref{eq:GammaParam}}
\begin{equation*}
\begin{array}{lll}      
&\left[C_{n+1-\kappa}(G(z)) \right]_{\beta, \gamma} = \det \begin{bmatrix}
[G(z)]_{1,i_1} & [G(z)]_{1,i_2} & \ldots & [G(z)]_{1,i_{n+1-\kappa}} \\
[G(z)]_{j_1,i_1} & [G(z)]_{j_1,i_2} & \ldots & [G(z)]_{j_1,i_{n+1-\kappa}} \\
\vdots\\
[G(z)]_{j_{n-\kappa},i_1} & [G(z)]_{j_{n-\kappa},i_2} & \ldots & [G(z)]_{j_{n-\kappa},i_{n+1-\kappa}}
\end{bmatrix} \vspace{0.1cm} \\
& = \det \begin{bmatrix}
z^{i_1-1} & z^{i_2-1} & \ldots & z^{i_{n+\kappa-1}-1} \\
m_{(j_1-2)+(i_1-1)} & m_{(j_1-2)+(i_2-1)} & \ldots & m_{(j_1-2)+(i_{n+1-\kappa}-1)} \\
\vdots\\
m_{(j_{n-\kappa}-2)+(i_1-1)} & m_{(j_{n-\kappa}-2)+(i_2-1)} & \ldots & m_{(j_{n-\kappa}-2)+(i_{n+1-\kappa}-1)}
\end{bmatrix}  \vspace{0.1cm} \\
& = \det \begin{bmatrix}
z^{\mathrm{a}} & z^{\mathrm{a}+\mathrm{k}_1} & \ldots & z^{\mathrm{a}+\mathrm{k}_{n-\kappa}} \\
m_{\mathrm{b}} & m_{\mathrm{b}+\mathrm{k}_1} & \ldots & m_{\mathrm{b}+\mathrm{k}_{n-\kappa}} \\
m_{\mathrm{b}+\mathrm{l}_1} & m_{\mathrm{b}+\mathrm{l}_1+\mathrm{k}_1} & \ldots & m_{\mathrm{b}+\mathrm{l}_1+\mathrm{k}_{n-\kappa}} \\
\vdots\\
m_{\mathrm{b}+\mathrm{l}_{n-\kappa-1}} & m_{\mathrm{b}+\mathrm{l}_{n-\kappa-1}+\mathrm{k}_{1}} & \ldots & m_{\mathrm{b}+\mathrm{l}_{n-\kappa-1}+\mathrm{k}_{n-\kappa}}
\end{bmatrix}=\Gamma_{\beta,\gamma}(z),
\end{array}
\end{equation*}
whence
\begin{equation*}
\theta_{n+1-\kappa}(z) = \sum_{\gamma \in \mathcal{Q}^{n+1}_{n+1-\kappa}}\sum_{\beta \in \mathcal{Q}^{n+1}_{n+1-\kappa}: 1\in \beta}\left[\operatorname{adj}_{n+1-\kappa}(D) \right]_{\gamma, \beta} \Gamma_{\beta,\gamma}(z)
\end{equation*}
as desired.}

\subsection{Proof of Theorem \ref{Lem:GammaEval}}\label{lem:GammBetAlphExp}
{ Recall that the algebraic moments are given by  
\begin{equation*}
m_{s} = \sum_{j=1}^n \alpha_jx_j^{s}.
\end{equation*}
Employing the representation of $\Gamma_{\beta,\gamma}$ in Lemma \ref{lem:qbarAsymp} we have 
\begin{equation*}
\begin{array}{lll}
&\Gamma_{\beta,\gamma}(z) =\operatorname{det} \begin{bmatrix}
z^{\mathrm{a}} & z^{\mathrm{a}+\mathrm{k}_1} & \ldots & z^{\mathrm{a}+\mathrm{k}_{n-\kappa}} \\
m_{\mathrm{b}} & m_{\mathrm{b}+\mathrm{k}_1} & \ldots & m_{\mathrm{b}+\mathrm{k}_{n-\kappa}} \\
m_{\mathrm{b}+\mathrm{l}_1} & m_{\mathrm{b}+\mathrm{l}_1+\mathrm{k}_1} & \ldots & m_{\mathrm{b}+\mathrm{l}_1+\mathrm{k}_{n-\kappa}} \\
\vdots\\
m_{\mathrm{b}+\mathrm{l}_{n-\kappa-1}} & m_{\mathrm{b}+\mathrm{l}_{n-\kappa-1}+\mathrm{k}_{1}} & \ldots & m_{\mathrm{b}+\mathrm{l}_{n-\kappa-1}+\mathrm{k}_{n-\kappa}}
\end{bmatrix}\vspace{0.1cm}\\
& = \operatorname{det}\begin{bmatrix}
z^{\mathrm{a}} & z^{\mathrm{a}+\mathrm{k}_1} & \ldots & z^{\mathrm{a}+\mathrm{k}_{n-\kappa}} \\
\sum_{\omega=1}^n \alpha_{\omega} x_{\omega}^{\mathrm{b}} & \sum_{\omega=1}^n \alpha_{\omega}x_{\omega}^{\mathrm{b}+\mathrm{k}_1} & \ldots & \sum_{\omega=1}^n \alpha_{\omega}x_{\omega}^{\mathrm{b}+\mathrm{k}_{n-\kappa}} \\
\sum_{\omega=1}^n \alpha_{\omega}x_{\omega}^{\mathrm{b}+\mathrm{l}_1} & \sum_{\omega=1}^n \alpha_{\omega}x_{\omega}^{\mathrm{b}+\mathrm{l}_1+\mathrm{k}_1} & \ldots & \sum_{\omega=1}^n \alpha_{\omega}x_{\omega}^{\mathrm{b}+\mathrm{l}_1+\mathrm{k}_{n-\kappa}} \\
\vdots\\
\sum_{\omega=1}^n \alpha_{\omega}x_{\omega}^{\mathrm{b}+\mathrm{l}_{n-\kappa-1}} & \sum_{\omega=1}^n \alpha_{\omega}x_{\omega}^{\mathrm{b}+\mathrm{l}_{n-\kappa-1}+\mathrm{k}_1}  & \ldots & \sum_{\omega=1}^n \alpha_{\omega}x_{\omega}^{\mathrm{b}+\mathrm{l}_{n-\kappa-1}+\mathrm{k}_{n-\kappa}}
\end{bmatrix}\vspace{0.1cm}\\
&=\sum_{\omega_1=1}^n\alpha_{\omega_1} \operatorname{det}\begin{bmatrix}
z^{\mathrm{a}} & z^{\mathrm{a}+\mathrm{k}_1} & \ldots & z^{\mathrm{a}+\mathrm{k}_{n-\kappa}} \\
x_{\omega_1}^{\mathrm{b}} & x_{\omega_1}^{\mathrm{b}+\mathrm{k}_1} & \ldots & x_{\omega_1}^{\mathrm{b}+\mathrm{k}_{n-\kappa}} \\
\sum_{\omega=1}^n \alpha_{\omega}x_{\omega}^{\mathrm{b}+\mathrm{l}_1} & \sum_{\omega=1}^n \alpha_{\omega}x_{\omega}^{\mathrm{b}+\mathrm{l}_1+\mathrm{k}_1} & \ldots & \sum_{\omega=1}^n \alpha_{\omega}x_{\omega}^{\mathrm{b}+\mathrm{l}_1+\mathrm{k}_{n-\kappa}} \\
\vdots\\
\sum_{\omega=1}^n \alpha_{\omega}x_{\omega}^{\mathrm{b}+\mathrm{l}_{n-\kappa-1}} & \sum_{\omega=1}^n \alpha_{\omega}x_{\omega}^{\mathrm{b}+\mathrm{l}_{n-\kappa-1}+\mathrm{k}_1}  & \ldots & \sum_{\omega=1}^n \alpha_{\omega}x_{\omega}^{\mathrm{b}+\mathrm{l}_{n-\kappa-1}+\mathrm{k}_{n-\kappa}}
\end{bmatrix}\vspace{0.1cm}\\
&=\dots =\sum_{\omega_1,\dots,\omega_{n-\kappa}=1}^n\left(\prod_{s=1}^{n-\kappa}\alpha_{\omega_s} \right)\operatorname{det}\begin{bmatrix}
z^{\mathrm{a}} & z^{\mathrm{a}+\mathrm{k}_1} & \ldots & z^{\mathrm{a}+\mathrm{k}_{n-\kappa}} \\
x_{\omega_1}^{\mathrm{b}} & x_{\omega_1}^{\mathrm{b}+\mathrm{k}_1} & \ldots & x_{\omega_1}^{\mathrm{b}+\mathrm{k}_{n-\kappa}} \\
x_{\omega_2}^{\mathrm{b}+\mathrm{l}_1} & x_{\omega_2}^{\mathrm{b}+\mathrm{l}_1+\mathrm{k}_1} & \ldots & x_{\omega_2}^{\mathrm{b}+\mathrm{l}_1+\mathrm{k}_{n-\kappa}} \\
\vdots\\
x_{\omega_{n-\kappa}}^{\mathrm{b}+\mathrm{l}_{n-\kappa-1}} & x_{\omega_{n-\kappa}}^{\mathrm{b}+\mathrm{l}_{n-\kappa-1}+\mathrm{k}_1}  & \ldots & x_{\omega_{n-\kappa}}^{\mathrm{b}+\mathrm{l}_{n-\kappa-1}+\mathrm{k}_{n-\kappa}}
\end{bmatrix}.
\end{array}
\end{equation*}
Consider a single summand in the latter sum. We have
\begin{equation*}
\begin{array}{lll}
&\left(\prod_{s=1}^{n-\kappa}\alpha_{\omega_s}\right) \operatorname{det}\begin{bmatrix}
z^{\mathrm{a}} & z^{\mathrm{a}+\mathrm{k}_1} & \ldots & z^{\mathrm{a}+\mathrm{k}_{n-\kappa}} \\
x_{\omega_1}^{\mathrm{b}} & x_{\omega_1}^{\mathrm{b}+\mathrm{k}_1} & \ldots & x_{\omega_1}^{\mathrm{b}+\mathrm{k}_{n-\kappa}} \\
x_{\omega_2}^{\mathrm{b}+\mathrm{l}_1} & x_{\omega_2}^{\mathrm{b}+\mathrm{l}_1+\mathrm{k}_1} & \ldots & x_{\omega_2}^{\mathrm{b}+\mathrm{l}_1+\mathrm{k}_{n-\kappa}} \\
\vdots\\
x_{\omega_{n-\kappa}}^{\mathrm{b}+\mathrm{l}_{n-\kappa-1}} & x_{\omega_{n-\kappa}}^{\mathrm{b}+\mathrm{l}_{n-\kappa-1}+\mathrm{k}_1}  & \ldots & x_{\omega_{n-\kappa}}^{\mathrm{b}+\mathrm{l}_{n-\kappa-1}+\mathrm{k}_{n-\kappa}}
\end{bmatrix}\vspace{0.1cm}\\
& = \left(\prod_{s=1}^{n-\kappa}\alpha_{\omega_s} \right) z^{\mathrm{a}}x_{\omega_1}^{\mathrm{b}}x_{\omega_2}^{\mathrm{b}+\mathrm{l}_1}\dots x_{\omega_{n-\kappa}}^{\mathrm{b}+\mathrm{l}_{n-\kappa-1}}\operatorname{det}\begin{bmatrix}
1 & z^{\mathrm{k}_1} & \ldots & z^{\mathrm{k}_{n-\kappa}} \\
1 & x_{\omega_1}^{\mathrm{k}_1} & \ldots & x_{\omega_1}^{\mathrm{k}_{n-\kappa}} \\
1 & x_{\omega_2}^{\mathrm{k}_1} & \ldots & x_{\omega_2}^{\mathrm{k}_{n-\kappa}} \\
\vdots\\
1 & x_{\omega_{n-\kappa}}^{\mathrm{k}_1}  & \ldots & x_{\omega_{n-\kappa}}^{\mathrm{k}_{n-\kappa}}
\end{bmatrix}.
\end{array}
\end{equation*}
Recall that \eqref{eq:GammaParam} implies that 
\begin{equation*}
1\leq \mathrm{k}_1<\dots<\mathrm{k}_{n-\kappa}.
\end{equation*}
Setting 
\begin{equation*}
\lambda_{1}=k_{n-\kappa}-n+\kappa, \dots, \lambda_{n-\kappa}=k_1-1, \lambda_{n-\kappa+1} = 0
\end{equation*}
we see that $0\leq \lambda_{n-\kappa+1}\leq \lambda_{n-\kappa}\leq \dots \leq \lambda_1$. Thus, considering $\underline{\lambda}:=(\lambda_1,\dots,\lambda_{n-\kappa+1})$ as an integer partition \cite[Chapter 1]{macdonald1998symmetric}, we have that the considered summand equals 
\begin{equation}\label{eq:Summand}
\begin{array}{lll}
&\left(\prod_{s=1}^{n-\kappa}\alpha_{\omega_s} \right)z^{\mathrm{a}}x_{\omega_1}^{\mathrm{b}}x_{\omega_2}^{\mathrm{b}+\mathrm{l}_1}\dots x_{\omega_{n-\kappa}}^{\mathrm{b}+\mathrm{l}_{n-\kappa-1}}\left(\prod_{s=1}^{n-\kappa} (x_{\omega_s}-z) \right)\\
&\hspace{15mm}\times \left(\prod_{1\leq s<t\leq n-\kappa } (x_{\omega_t}-x_{\omega_s}) \right)s_{\underline{\lambda}}\left(z,x_{\omega_1},\dots,x_{\omega_{n-\kappa}} \right)
\end{array}
\end{equation}
where $s_{\underline{\lambda}}\left(z,x_{\omega_1},\dots,x_{\omega_{n-\kappa}} \right)$ is the Schur polynomial for the partition $\underline{\lambda}$ and in the variables $z,x_{\omega_1},\dots,x_{\omega_{n-\kappa}}$ \cite[Chapter 3]{macdonald1998symmetric}. Note that the use of Schur polynomials allowed us to extract the Vandermonde determinant on $\left\{z,x_{\omega_1},\dots, x_{\omega_{n-\kappa}} \right\}$. To obtain an additional copy of the Vandermonde determinant, we sum over the terms \eqref{eq:Summand} to obtain the following representation of $\Gamma_{\beta,\gamma}$
\begin{equation*}
\begin{array}{lll}
\Gamma_{\beta,\gamma}(z)&=\sum_{\omega_1,\dots,\omega_{n-\kappa}=1}^n\left\{\left(\prod_{s=1}^{n-\kappa}\alpha_{\omega_s} \right)z^{\mathrm{a}}x_{\omega_1}^{\mathrm{b}}x_{\omega_2}^{\mathrm{b}+\mathrm{l}_1}\dots x_{\omega_{n-\kappa}}^{\mathrm{b}+\mathrm{l}_{n-\kappa-1}}\right.\vspace{0.1cm} \\
&\left.\times\left(\prod_{s=1}^{n-\kappa} (x_{\omega_s}-z) \right) \left(\prod_{1\leq s<t\leq n-\kappa } (x_{\omega_t}-x_{\omega_s}) \right)s_{\underline{\lambda}}\left(z,x_{\omega_1},\dots,x_{\omega_{n-\kappa}} \right)\right\}
\end{array}
\end{equation*}
Considering the latter representation we see that for selection of $\omega_1,\dots,\omega_{n-\kappa}\in [n]$ such that there exist some $1\leq s<t\leq n-\kappa$ with $\omega_s=\omega_t$ the corresponding summand is zero. Hence, taking the symmetry of Schur polynomials \cite[Chapter 3]{macdonald1998symmetric} into account, we obtain  
\begin{equation}\label{eq:gamma-beta-gamma-explicit}
\begin{array}{lll}
\Gamma_{\beta,\gamma}(z)&=\sum_{(\omega_1,\dots,\omega_{n-\kappa})\in \mathcal{Q}^{n}_{n-\kappa}}\left\{\left(\prod_{s=1}^{n-\kappa}\alpha_{\omega_s} \right)\left(\prod_{s=1}^{n-\kappa} (x_{\omega_s}-z) \right)\right. \vspace{0.1cm}\\
&\hspace{5mm}\left.\times \left(\prod_{1\leq s<t\leq n-\kappa } (x_{\omega_t}-x_{\omega_s}) \right)s_{\underline{\lambda}}\left(z,x_{\omega_1},\dots,x_{\omega_{n-\kappa}} \right) z^{\mathrm{a}}\psi_{\underline{\lambda},\omega_1,\dots,\omega_{n-\kappa}}\right\}
\end{array}
\end{equation}
{\color{blue} where
\begin{equation}\label{eq:psilambdDef}
\begin{array}{lll}
\psi_{\underline{\lambda},\omega_1,\dots,\omega_{n-\kappa}} &=  \sum_{\sigma \in S_{n-\kappa}}(-1)^{\operatorname{sgn}(\sigma)}x_{\omega_{\sigma(1)}}^{\mathrm{b}}x_{\omega_{\sigma(2)}}^{\mathrm{b}+\mathrm{l}_1}\dots x_{\omega_{\sigma(n-\kappa)}}^{\mathrm{b}+\mathrm{l}_{n-\kappa-1}}\vspace{0.1cm}\\
& = \operatorname{det} \begin{bmatrix}
x_{\omega_1}^{\mathrm{b}} & \ldots & x_{\omega_{n-\kappa}}^{\mathrm{b}} \\
x_{\omega_1}^{\mathrm{b}+\mathrm{l}_1} & \ldots & x_{\omega_{n-\kappa}}^{\mathrm{b}+\mathrm{l}_1} \\
\vdots\\
x_{\omega_{1}}^{\mathrm{b}+\mathrm{l}_{n-\kappa-1}}  & \ldots & x_{\omega_{n-\kappa}}^{\mathrm{b}+\mathrm{l}_{n-\kappa-1}}
\end{bmatrix} =\left(\prod_{j=1}^{n-\kappa}x_{\omega_j}^b \right) \operatorname{det}\begin{bmatrix}
1 & \ldots & 1 \\
x_{\omega_1}^{\mathrm{l}_1} & \ldots & x_{\omega_{n-\kappa}}^{\mathrm{l}_1} \\
\vdots\\
x_{\omega_{1}}^{\mathrm{l}_{n-\kappa-1}}  & \ldots & x_{\omega_{n-\kappa}}^{\mathrm{l}_{n-\kappa-1}}
\end{bmatrix}
\end{array}
\end{equation}}
and $S_{n-\kappa}$ denotes the symmetric group on $[n-\kappa]$. {\color{blue} In particular, if there exist $1\leq s<t\leq n-\kappa$ such that $x_{\omega_s}=x_{\omega_t}$, then $\psi_{\underline{\lambda},\omega_1,\dots,\omega_{n-\kappa}}=0$. Thus, $\psi_{\underline{\lambda},\omega_1,\dots,\omega_{n-\kappa}}$ is divisible by $\prod_{1\leq s<t\leq n-\kappa}(x_{\omega_t}-x_{\omega_s})$. In fact, introducing the partition
\begin{equation*}
    \mu_{n-\kappa} = 0,\ \mu_{n-\kappa -1 }=\mathrm{l}_1-1 ,\ \dots \ ,  \mu_1 = \mathrm{l}_{n-\kappa-1}- (n-\kappa-1)
\end{equation*}
we have that $\mu_1\geq \dots \geq \mu_{n-\kappa}$, and the integer partition $\underline{\mu}:=\left(\mu_1,\dots,\mu_{n-\kappa} \right)$ satisfies
\begin{equation*}
\begin{array}{lll}
& \operatorname{det}\begin{bmatrix}
1 & \ldots & 1 \\
x_{\omega_1}^{\mathrm{l}_1} & \ldots & x_{\omega_{n-\kappa}}^{\mathrm{l}_1} \\
\vdots\\
x_{\omega_{1}}^{\mathrm{l}_{n-\kappa-1}}  & \ldots & x_{\omega_{n-\kappa}}^{\mathrm{l}_{n-\kappa-1}}
\end{bmatrix} = s_{\underline{\mu}}(x_{\omega_1},\dots, x_{\omega_{n-\kappa}})\cdot \left(\prod_{1\leq s<t\leq n-\kappa }(x_{\omega_t}-x_{\omega_s})\right),
\end{array}
\end{equation*}}
whence we conclude that 
\begin{equation*}
\begin{array}{lll}
\Gamma_{\beta,\gamma}(z)&=\sum_{(\omega_1,\dots,\omega_{n-\kappa})\in \mathcal{Q}^n_{n-\kappa}}\left\{\left(\prod_{s=1}^{n-\kappa} (x_{\omega_s}-z) \right)\right. \vspace{0.1cm}\\
&\times \left.\left(\prod_{1\leq s<t\leq n-\kappa } (x_{\omega_t}-x_{\omega_s})^2 \right)\phi^{\beta,\gamma}_{(\omega_1,\dots,\omega_{n-\kappa})}(z)\right\}
\end{array}
\end{equation*}
{\color{blue} where 
\begin{equation}\label{eq:phi-explicit}
 \begin{array}{lll}
&\phi^{\beta,\gamma}_{(\omega_1,\dots,\omega_{n-\kappa})}(z) = \left(\prod_{s=1}^{n-\kappa}\alpha_{\omega_s} \right)s_{\underline{\lambda}}\left(z,x_{\omega_1},\dots,x_{\omega_{n-\kappa}} \right) s_{\underline{\mu}}(x_{\omega_1},\dots, x_{\omega_{n-\kappa}}) {z^{\textrm{a}} }\left(\prod_{j=1}^{n-\kappa}x_{\omega_j}^b \right)
 \end{array}
\end{equation}}
is a polynomial in $\left\{ z,w_{\omega_1},\dots,x_{\omega_{n-\kappa}}\right\}$.
}

\section{Proofs - Section~\ref{Sec:PfMainthm}}\label{thm:node-accuracy-DimaPf}

\subsection{Proof of Lemma \ref{lem:PcalRecur}}\label{lem:PcalRecurPf}
{ We have
\begin{equation*}
\begin{array}{lll}
\mathcal{P}_{(\omega_1,\dots,\omega_{n-\kappa})} = |x_{\omega_s}-z|\left( \prod_{1\leq t\leq n-\kappa, t\neq s}|x_{\omega_t}-x_{\omega_s}|^2\right)\mathcal{P}_{(\omega_1,\dots,\omega_{s-1},\omega_{s+1},\dots,\omega_{n-\kappa})}
\end{array}
\end{equation*}
We consider three cases. First, let $\omega_s = j_*$. Then, {\color{blue} recalling that $x_{j_*}$ has $j_* \in \mathcal{C}_{\mu}$ with $\ell_{\mu}$ nodes,}
\begin{equation*}
|x_{\omega_s}-z|\left( \prod_{1\leq t\leq n-\kappa, t\neq s}|x_{\omega_t}-x_{\omega_s}|^2\right) =  \rho_* \left( \prod_{1\leq t\leq n-\kappa, \omega_t\neq j_*}|x_{\omega_t}-x_{j_*}|^2\right)\gtrapprox \rho_* \delta^{2(\ell_{\mu}-1)}.  
\end{equation*}
{\color{blue} Indeed, when choosing elements $x_{\omega_t}$ so that the product is smallest, we can either choose $\omega_t\in \mathcal{C}_{\mu}\setminus \left\{j_* \right\}$, which contributes  $|x_{\omega_t}-x_{j_*}|^2 = \delta^2<1$ to the product, or we can choose $\omega_t\notin \mathcal{C}_{\mu}$, which contributes $|x_{\omega_t}-x_{j_*}|^2 \geq T^2 = O(1)$. Thus, the smallest product obtainable is achieved by choosing \emph{all} indices in $\mathcal{C}_{\mu}\setminus \left\{j_* \right\}$.
}

If $\omega_s\in \mathcal{C}_{\mu}\setminus \left\{j_* \right\}$ then {\color{blue}similar reasoning leads to}
\begin{equation*}
|x_{\omega_s}-z|\left( \prod_{1\leq t\leq n-\kappa, t\neq s}|x_{\omega_t}-x_{\omega_s}|^2\right) \gtrapprox ( \delta - \rho_*) \delta^{2(\ell_{\mu}-1)}. 
\end{equation*}
{\color{blue} Indeed, here $|x_{\omega_s}-z|\geq |x_{\omega_s}-x_{j_*}| - |z-x_{j_*}|\geq \delta- \rho_*$, whereas the second term follows by the same arguments as above.}

Finally, if $\omega_s\notin \mathcal{C}_{\mu}$ then 
\begin{equation*}
|x_{\omega_s}-z|\left( \prod_{1\leq t\leq n-\kappa, t\neq s}|x_{\omega_t}-x_{\omega_s}|^2\right) \gtrapprox \left(T- \rho_* \right) 
\delta^{2(\ell_{*}-1)}.  
\end{equation*}
{\color{blue} Indeed, note that in this case $|x_{\omega_s}-z|\geq |x_{\omega_s}-x_{j_*}| - |z-x_{j_*}|\geq T- \rho_*$, whereas for the product, the smallest result is obtained when $x_{\omega_s}$ belongs to the largest cluster, whereas $\left\{x_{\omega_1} ,\dots x_{\omega_{s-1}}, x_{\omega_{s+1}},\dots, x_{\omega_{n-\kappa}}\right\}$ are chosen first as all remaining elements of this cluster, and the remaining elements in different clusters (the latter contribute $O(1)$ to the product).} This shows \eqref{eq:PcalDivision}. 

This shows \eqref{eq:PcalDivision}. Setting $\epsilon = \bar{\epsilon} \delta^{2\ell_*-1}$ and $\rho_* = \bar{\rho}_*\delta^{2(\ell_*-\ell_{\mu})+1}$, where $\bar{\epsilon}$ and $\bar{\rho}_*$ are independent of $\delta$, {\color{blue} and taking into account $\delta <1$ and $\bar{\rho}_*<\frac{1}{3}\min(1,T)$, it follows that 
\begin{equation*}
\begin{array}{lll}
&\frac{\rho_*\delta^{2(\ell_{\mu}-1)}}{\epsilon} = \frac{\bar{\rho}_*\delta^{2\ell_*-1}}{\bar{\epsilon}\delta^{2\ell_*-1}}= \frac{\bar{\rho}_*}{\bar{\epsilon}},\vspace{0.1cm}\\
&\frac{(\delta - \rho_*) \delta^{2(\ell_{\mu}-1)}}{\epsilon}\geq \left(\frac{1}{\delta} \right)^{2(\ell_*-\ell_{\mu})}\frac{1 - \bar{\rho}_*\delta^{2(\ell_* -\ell_{\mu})}}{\bar{\epsilon} }\geq \left(\frac{1}{\delta} \right)^{2(\ell_*-\ell_{\mu})}\frac{1 - \bar{\rho}_* }{\bar{\epsilon} }\geq  \frac{ \bar{\rho}_*}{\bar{\epsilon}},\vspace{0.1cm} \\ &\frac{( T - \rho_*) \delta^{2(\ell_{*}-1)}}{\epsilon}\geq \frac{1}{\delta} \frac{ T - \bar{\rho}_*}{\bar{\epsilon} }\geq \frac{  \bar{\rho}_*}{\bar{\epsilon}}.
\end{array}
\end{equation*}
Hence, the rightmost
term} is the smallest of the lower bounds which leads to \eqref{eq:PcalDiv}.
}

\subsection{Proof of Proposition \ref{prop:HighPowerBds}}\label{prop:HighPowerBdsPf}
{ 
Recall that $\epsilon = \bar{\epsilon}\delta^{2\ell_*-1}$  and let $m\leq n$. Consider the following function and set of constraints 
\begin{equation*}
\begin{array}{lll}
&g_{m,n}(\varpi_1,\dots,\varpi_m) = \sum_{s=1}^{m}\varpi_s^2-n,\\ 
&(\varpi_1,\dots,\varpi_{m})^{\top}\in \mathfrak{A}_{m,n}:= \left\{ y\in \mathbb{R}^m \ | \ \sum_{s=1}^{m}y_s = n,\ y_s\geq 1 \ \forall s\in [m]
\right\}.
\end{array}
\end{equation*}
Since $\mathfrak{A}_{m,n}$ is compact ($m-1$ dimensional simplex in $\mathbb{R}^{m}$), $g_m$ achieves its extrema in $\mathfrak{A}_{m,n}$. We begin by finding extremum candidates in the interior of $\mathfrak{A}_{m,n}$. Employing Largange multipliers with the auxiliary parameter $\eta$, we obtain the following first order conditions
\begin{equation*}
    \sum_{s=1}^m\varpi_s=n,\quad 2\varpi_s=\eta ,\ s\in [m],
\end{equation*}
whence we get the candidate $L_{m,n}^{\top}=\left(\frac{n}{m},\dots, \frac{n}{m}\right)$. Next, consider the boundary $\partial \mathfrak{A}_{m,n}$. It can be easily seen that it is characterized by \emph{exactly} $k$ of the entries of $(\ell_1,\dots, \ell_m)$ being equal to $1$ for some $1\leq k \leq m$. By symmetry we can assume that $\ell_{m-k+1}=\dots = \ell_{m}=1$, whereas $\ell_1,\dots,\ell_{m-k}>1$. Moreover, in this case $g_{m,n}(\ell_1,\dots,\ell_m) = g_{m-k,n-k}(\ell_1,\dots,\ell_{m-k})$ with $(\ell_1,\dots,\ell_{m-k})^{\top}$ in the relative interior of $\mathfrak{A}_{m-k,n-k}$. Employing the same arguments as above we get the candidate $L_{m-k,n-k}^{\top}=\left(\frac{n-k}{m-k},\dots, \frac{n-k}{m-k},1,\dots,1\right), \ k<m$ and $L_{0,0}^{\top}=(1,\dots,1),\ k= m=n$. Substitution into the function gives 
\begin{equation}\label{eq:gminmax0}
\begin{array}{lll}
g_{m,n}\left(L_{m-k,n-k}^{\top}\right) = \begin{cases}
(n-k)\frac{n-m}{m-k},\quad 0\leq k\leq m-1,\\
0,\quad k= m=n
\end{cases} .
\end{array}
\end{equation}
A simple comparison shows that
\begin{equation}\label{eq:gminmax}
0=g_{m,n}\left(L_{0,0}^{\top}\right)<g_{m,n}\left(L_{m-1,n-1}^{\top}\right)<\dots<g_{m,n}\left(L_{1,n-m+1}^{\top}\right).
\end{equation}
I.e., taking only a single $\ell_i$ to be different from $1$ maximizes $g_{m,n}$. 

Now, consider 
\begin{equation}\label{eq:Highestterms}
\begin{array}{lll}
\frac{1}{\bar{\epsilon}^n}\frac{\epsilon^{n}}{\delta^{n(n-1)+2\ell_*-\ell_{\mu}-2\varrho} } &\overset{\eqref{eq:xiDef11}}{=} \delta^{n(2\ell_*-1)-2\ell_*+\ell_{\mu}-\sum_{s\in [\zeta]}\ell_s(\ell_s-1)} \\
&= \delta^{n(2\ell_*-1)-2\ell_*+\ell_{\mu}-g_{\zeta,n}(\ell_1,\dots,\ell_{\zeta})}.
\end{array}
\end{equation}
{\color{blue} We now use the previous maximization of $g_{\zeta,n}$ to show that the power of $\delta$ on the right-hand side is non-negative, whence the right-hand side is upper bounded by 1 (recall that $\delta<1$).} For the case of $\zeta=n$ (i.e., when $(\ell_1,\dots,\ell_n)=L_{0,0}^T$), we have that $\ell_*=1$ and \eqref{eq:gminmax} yields $n(2\ell_*-1)-2\ell_*+\ell_{\mu}-g_{\zeta,n}(\ell_1,\dots,\ell_{\zeta}) = n-1\geq 1$, whence the result follows. Assume that we have $\zeta<n$ and \emph{exactly} $k<\zeta$ singleton clusters. In this case $\ell_* = \lceil \frac{n-k}{\zeta-k} \rceil$. Writing $\frac{n-k}{\zeta-k}=m+r$
where $m\in \mathbb{N}$ and $0\leq r<1$, we have 
\begin{equation*}
\begin{array}{llll}
n(2\ell_*-1)-2\ell_*+\ell_{\mu}-g_{\zeta,n}(\ell_1,\dots,\ell_{\zeta}) &\overset{\eqref{eq:gminmax0},\eqref{eq:gminmax}}{\geq}n(2m+1)-2m-2+\ell_{\mu}-(n-k)(m+r)\\
& = (n+k-2)m+n(1-r)+kr+\ell_{\mu}-2.
\end{array}
\end{equation*}
Since $n\geq 2$ and $k\geq 0$, if  $\ell_{\mu}\geq 2$, the latter expression is clearly nonnegative. If $\ell_{\mu}=1$, then we must have either $n\geq 3$ or $k\geq 1$, whence the expression is also nonnegative (indeed, $k=0$ and $n=2$ imply a clustered configuration of 2 nodes with no singleton clusters, which contradicts $\ell_{\mu}=1$). The proof is concluded.
} 

\subsection{Proof of Lemma \ref{lem:PcalEst}}\label{lem:PcalEstPf}

Fixing $(\omega_1,\dots,\omega_{n-1})\in \mathcal{Q}_{n-1}^n$ we consider a single summand of the form $\epsilon\mathcal{P}(\omega_1,\dots,\omega_{n-1})$. We derive an upper bound on 
\begin{equation}\label{eq:PcalDef1}
\epsilon\mathcal{P}_{(\omega_1,\dots,\omega_{n-1})}=\epsilon\left(\prod_{s=1}^{n-1} |x_{\omega_s}-z| \right)\left(\prod_{1\leq s<t\leq n-1 } |x_{\omega_t}-x_{\omega_s}|^2 \right)  
\end{equation}
up to constants that are \emph{independent of} $\delta$. Recall that $|z-x_{j_*}| = \rho_*$, where {\color{blue} $j_*\in \mathcal{C}_{\mu}$} with $\operatorname{card}(\mathcal{C}_{\mu})=\ell_{\mu}$. To upper bound $\prod_{s=1}^{n-1} |x_{\omega_s}-z|$, note that exactly one of the following holds:
\begin{itemize}
\item \underline{Case 1}: $\omega_s \neq j_*$ for all $s\in [n-1]$. In this case
\begin{align}
\prod_{s=1}^{n-1} |x_{\omega_s}-z| &=\left(\prod_{\omega_s\notin \mathcal{C}_{\mu}}|x_{\omega_s}-z| \right) \left( \prod_{\omega_s\in \mathcal{C}_{\mu}\setminus \left\{ j_*\right\}}|x_{\omega_s}-z|\right)\nonumber\\
&\leq \left(\rho_*+\eta T \right)^{n-\ell_{\mu}} \left(\rho_*+\tau \delta \right)^{\ell_{\mu}-1}.\label{eq:Case1}
\end{align}
\item \underline{Case 2}: $\omega_s\neq t$ for some $t\in \mathcal{C}_{\mu}\setminus \left\{j_* \right\}$. In this case
\begin{align}
\prod_{s=1}^{n-1}|x_{\omega_s}-z| &=\left(\prod_{\omega_s\notin \mathcal{C}_n}|x_{\omega_s}-z| \right) \left( \prod_{\omega_s\in \mathcal{C}_{\mu}\setminus \left\{ r\right\}}|x_{\omega_s}-z|\right)\nonumber\\
&\leq \left(\rho_*+\eta T \right)^{n-\ell_{\mu}}\rho_* \left(\rho_*+\tau \delta \right)^{\ell_{\mu}-2}.\label{eq:Case2}
\end{align}
\item \underline{Case 3}: $\omega_s\neq t$ for some $t\notin \mathcal{C}_{\mu}$. In this case
\begin{align}
\prod_{s=1}^{n-1}|x_{\omega_s}-z| &=\left(\prod_{\omega_s\notin \mathcal{C}_{\mu}\cup \left\{ t\right\}}|x_{\omega_s}-z| \right)  \left( \prod_{\omega_s\in \mathcal{C}_{\mu}}|x_{\omega_s}-z|\right)\nonumber \\
&\leq \left(\rho_*+\eta T \right)^{n-\ell_{\mu}-1}\rho_* \left(\rho_*+\tau \delta \right)^{\ell_{\mu}-1}.\label{eq:Case3}
\end{align}
\end{itemize}
To upper bound $\prod_{1\leq s<t\leq n-1 } |x_{\omega_t}-x_{\omega_s}|^2$, consider an arbitrary
$\iota \in [\zeta]$ and $b\in \mathcal{C}_{\iota}$. Then, 
\begin{equation}\label{eq:varpiiota}
\varpi_{\iota} = \binom{n-1}{2}-\sum_{s\in [\zeta]\setminus \left\{\iota \right\}}\binom{\ell_{s}}{2}-\binom{\ell_{\iota}-1}{2}
\end{equation}
is the number of ways to choose two nodes which differ from $x_b$ and \emph{do not belong} to the same cluster. Note that the subscript $\iota$ is added to take into account that the node $x_b$ with $b\in \mathcal{C}_{\iota}$ is removed from the nodal set. Let $1\leq s<t\leq n-1$ and consider the nodes $x_{\omega_t}$ and $x_{\omega_s}$. Exactly one of the following options holds: (1) $\omega_t,\omega_{s}\in \mathcal{C}_{\iota'}$ for some $\iota'\neq \iota$, (2) $\omega_t,\omega_s\in \mathcal{C}_{\iota}\setminus \left\{b \right\}$ and (3) $\omega_s\in \mathcal{C}_{\iota'},\ \omega_t\in \mathcal{C}_{\iota''}$ where $\iota',\iota''\in[\zeta],\ \iota'\neq \iota''$. Hence, for $\iota \in [\zeta]$ and $b\in \mathcal{C}_{\iota}$, the following bound is obtained
\begin{equation}\label{eq:ProdBoundCluster11}
\prod_{1\leq s<t\leq n-1 } |x_{\omega_t}-x_{\omega_s}|^2 \leq \left(\eta T \right)^{2\varpi_{\iota}}  \left(\tau \delta \right)^{(n-1)(n-2)-2\varpi_{\iota}} .
\end{equation}
Using \eqref{eq:PcalDef1}-\eqref{eq:ProdBoundCluster11}, we have the following three cases:
\begin{itemize}
\item \underline{Case 1:}  $\omega_s \neq j_*$ for all $s\in [n-1]$. Here $x_{j_*}$  is removed from the nodal set, whence $\iota=\mu$ in \eqref{eq:varpiiota} and 
\begin{equation}\label{eq:PcalBound1}
\epsilon\mathcal{P}_{(\omega_1,\dots,\omega_{n-1})} \leq \epsilon\left(\eta T \right)^{2\varpi_{\mu}} \left(\tau \delta \right)^{(n-1)(n-2)-2\varpi_{\mu}}\left(\rho_*+\eta T \right)^{n-\ell_{\mu}} \left(\rho_*+\tau \delta \right)^{\ell_{\mu}-1}   
\end{equation}
\item \underline{Case 2}: $\omega_s\neq b$ for some $b\in \mathcal{C}_{\mu}\setminus \left\{j_* \right\}$, whence $\iota=\mu$ in \eqref{eq:varpiiota} and 
\begin{equation}\label{eq:PcalBound2}
\epsilon\mathcal{P}_{(\omega_1,\dots,\omega_{n-1})} \leq \epsilon\left(\eta T \right)^{2\varpi_{\mu}} \left(\tau \delta \right)^{(n-1)(n-2)-2\varpi_{\mu}}  \left(\rho_*+\eta T \right)^{n-\ell_{\mu}}\rho_* \left(\rho_*+\tau \delta \right)^{\ell_{\mu}-2}
\end{equation}
\item \underline{Case 3:} $\omega_s\neq b$ for some $b\in \mathcal{C}_{\iota},\ \iota\neq \mu$. In this case we have $\iota\neq \mu$ in \eqref{eq:varpiiota} and
\begin{equation}\label{eq:PcalBound3}
\epsilon\mathcal{P}_{(\omega_1,\dots,\omega_{n-1})} \leq \epsilon\left(\eta T \right)^{2\varpi_{\iota}} \left(\tau \delta \right)^{(n-1)(n-2)-2\varpi_{\iota}} \left(\rho_*+\eta T \right)^{n-\ell_{\mu}-1}\rho_* \left(\rho_*+\tau \delta \right)^{\ell_{\mu}-1}
\end{equation}
\end{itemize}

Now, recall \eqref{eq:xiDef11} and \eqref{eq:varpiiota} and note that the following relation holds
\begin{equation}\label{eq:PowerId}
(n-1)(n-2)-2\varpi_{\iota} =n(n-1) - 2\varrho - 2(\ell_{\iota}-1).
\end{equation}
Now, we consider the three cases \eqref{eq:PcalBound1}-\eqref{eq:PcalBound3} separately:
\begin{itemize}
\item \underline{Case 1:} 
\begin{equation*}
\begin{array}{lll}
\frac{\epsilon\mathcal{P}_{(\omega_1,\dots,\omega_{n-1})}}{\bar{\epsilon}\delta^{n(n-1)+2\ell_*-\ell_\mu-2\varrho}} &\leq \left(\eta T \right)^{2\varpi_{\mu}}\tau^{(n-1)(n-2)-2\varpi_{\mu}}\left(\rho_*+\eta T \right)^{n-\ell_{\mu}}\frac{\delta^{2\ell_*-1} \delta^{(n-1)(n-2)-2\varpi_{\mu}} \left(\rho_*+\tau \delta \right)^{\ell_{\mu}-1}}{\delta^{n(n-1)+2\ell_*-\ell_\mu-2\varrho}} \\
&\overset{\eqref{eq:PowerId}}{\lessapprox} \delta^{2\ell_{\mu}-2-2(\ell_{\mu}-1)}=1.
\end{array}
\end{equation*}
\item \underline{Case 2:}
\begin{equation*}
\begin{array}{lll}
\frac{\epsilon\mathcal{P}_{(\omega_1,\dots,\omega_{n-1})}}{\bar{\epsilon}\delta^{n(n-1)+2\ell_*-\ell_\mu-2\varrho}} &\leq \left(\eta T \right)^{2\varpi_{\mu}}\tau^{(n-1)(n-2)-2\varpi_{\mu}}\left(\rho_*+\eta T \right)^{n-\ell_{\mu}}\frac{\delta^{2\ell_*-1} \delta^{(n-1)(n-2)-2\varpi_{\mu}} \rho_*\left(\rho_*+\tau \delta \right)^{\ell_{\mu}-2}}{\delta^{n(n-1)+2\ell_*-\ell_\mu-2\varrho}} \\
&\overset{\eqref{eq:PowerId}}{\lessapprox}\bar{\rho}_*\delta^{2(\ell_*-\ell_{\mu})}.
\end{array}
\end{equation*}
\item \underline{Case 3:}
\begin{equation*}
\begin{array}{lll}
\frac{\epsilon\mathcal{P}_{(\omega_1,\dots,\omega_{n-1})}}{\bar{\epsilon}\delta^{n(n-1)+2\ell_*-\ell_\mu-2\varrho}} &\leq \left(\eta T \right)^{2\varpi_{\iota}}\tau^{(n-1)(n-2)-2\varpi_{\iota}}\left(\rho_*+\eta T \right)^{n-\ell_{\mu}-1}\frac{\delta^{2\ell_*-1} \delta^{(n-1)(n-2)-2\varpi_{\iota}} \rho_*\left(\rho_*+\tau \delta \right)^{\ell_{\mu}-1}}{\delta^{n(n-1)+2\ell_*-\ell_\mu-2\varrho}} \\
&\overset{\eqref{eq:PowerId}}{\lessapprox}\bar{\rho}_*\delta^{2(\ell_*-\ell_{\iota})+1}.
\end{array}
\end{equation*}
\end{itemize}
This concludes the proof.
% \end{proof}

\section{Proofs - Section~\ref{Sec:PfAmplitude}}

\subsection{Proof of Proposition \ref{eq:Clust1Sum}}\label{eq:Clust1SumPf}

Recall \eqref{eq:PertLagr} and \eqref{eq:VDMEval} and consider the product $\prod_{b\neq j}\frac{1}{|\tilde{x}_j-\tilde{x}_b|}$. Given $b\neq j, \ b\in \mathcal{C}_s$, we have
\begin{align*}
    |\tilde{x}_j-\tilde{x}_b| &\geq \left|x_j-x_b \right| - \rho_t-\rho_s\geq \begin{cases}
     T -\rho_t - \rho_s,\quad s\neq t\\
     \delta -2\rho_t,\quad s = t
    \end{cases}\geq\begin{cases}
     \left(1-\frac{2}{3\tau} \right)T, \quad s\neq t\\
     \frac{\delta}{3}, \quad s=t
    \end{cases}.
\end{align*}     
Therefore,
\begin{align*}
    \prod_{b\neq j}\frac{1}{\left|\tilde{x}_j - \tilde{x}_b \right|} \leq  \left(\frac{3}{\delta} \right)^{\ell_t-1}\left[ \left(1-\frac{2}{3\tau} \right)T\right]^{-n+\ell_t-1}\lessapprox \frac{1}{\delta^{\ell_t-1}}
\end{align*}
implying, together with \eqref{eq:PertLagr} and \eqref{eq:VDMEval}, that for any $a\in \left\{0,\dots,n-1 \right\}$
\begin{align*}
    |\tilde{\ell}_{j,a}|\lessapprox \prod_{b\neq j}\frac{1}{|\tilde{x}_j-\tilde{x}_b|}\lessapprox \frac{1}{\delta^{\ell_t-1}}.
\end{align*}
 Since $\left\|b_e \right\|_{\infty}<\epsilon$, the latter estimate gives the desired result.
\begin{remark}
Note that $\tilde{\ell}_{j,a}$ is of the form $\tilde{\ell}_{j,a} = \left(\prod_{b\neq j}\frac{1}{\tilde{x}_j-\tilde{x}_b}\right)\mathfrak{s}_{j,a}(\tilde{x}_1,\dots, \tilde{x}_{n})$, where $\mathfrak{s}_{j,a}(\tilde{x}_1,\dots, \tilde{x}_{n})$ is some symmetric polynomial in $\tilde{x}_1,\dots,\tilde{x}_n$. Since the perturbed nodes are bounded in a closed neighborhood of $\mathbb{S}^1$, {\color{blue} as follows from Theorem \ref{thm:node-accuracy-Dima}}, we have $\mathfrak{s}_{j,a}(\tilde{x}_1,\dots, \tilde{x}_{n}) = O(1)$ for all $j\in [n]$ and $a\in [n-1]$.
\end{remark}

\subsection{Proof of Proposition \ref{eq:Clust2Sum}}\label{eq:Clust2SumPf}
Consider first the case $s=j$. We have 
\begin{align}
\left[\tilde{V}^{-1}V\right]_{j,j}  &= \prod_{b\in [n]\setminus \left\{j\right\}}\left[\frac{x_j-\tilde{x}_b}{\tilde{x}_j-\tilde{x}_b} \right] = \prod_{b\in [n]\setminus \left\{j\right\}}\left[1+\frac{x_j-\tilde{x}_j}{\tilde{x}_j-\tilde{x}_b} \right]\nonumber \\
& =\left(\prod_{b\in \mathcal{C}_t\setminus \left\{j\right\}}\left[1+\frac{x_j-\tilde{x}_j}{\tilde{x}_j-\tilde{x}_b} \right]\right)\left(\prod_{b\notin \mathcal{C}_t}\left[1+\frac{x_j-\tilde{x}_j}{\tilde{x}_j-\tilde{x}_b} \right]\right)\nonumber \\
& = \left(1+ \sum_{\nu=1}^{\ell_t-1} \left[x_j-\tilde{x}_j\right]^{\nu}\lambda_{\nu}^{(1),j}\right)\left(1+ \sum_{\nu=1}^{n-\ell_t} \left[x_j-\tilde{x}_j\right]^{\nu}\lambda_{\nu}^{(2),j}\right)\label{eq:seqj}
\end{align} 
where
\begin{align*}
&\lambda_{\nu}^{(1),j} = \sum_{B\subseteq \mathcal{C}_t\setminus \left\{j \right\}, \ |B|=\nu}\prod_{b\in B}\frac{1}{|\tilde{x}_j-\tilde{x}_b|} \lessapprox \frac{1}{\delta^{\nu}},\quad  \lambda_{\nu}^{(2),j} = \sum_{B\subseteq [n]\setminus \mathcal{C}_t, \ |B|=\nu}\prod_{b\in B}\frac{1}{|\tilde{x}_j-\tilde{x}_b|} = O(1). 
\end{align*}
Note that {\color{blue} $\rho_j = \bar{\rho}_j\delta^{2(\ell_*-\ell_t)+1}$ implies $\frac{\rho_j}{\delta}\leq \bar{\rho}_j<\frac{1}{3}$, whence}
\begin{align*}
  &  \left|\sum_{\nu=1}^{\ell_t-1} \left[x_j-\tilde{x}_j\right]^{\nu}\lambda_{\nu}^{(1),j}\right| \lessapprox \sum_{\nu=1}^{\ell_t-1}\left(\frac{\rho_j}{\delta} \right)^{\nu} \lessapprox \frac{\rho_j}{\delta},\\
  &  {\color{blue} \left|\sum_{\nu=1}^{n-\ell_t} \left[x_j-\tilde{x}_j\right]^{\nu}\lambda_{\nu}^{(2),j}\right| \lessapprox \sum_{\nu=1}^{n-\ell_t}\rho_j^{\nu} \lessapprox \rho_j},
\end{align*}
which, together with \eqref{eq:seqj}, gives
\begin{align*}
\left|\left[\tilde{V}^{-1}V\right]_{j,j} - 1 \right|\lessapprox \rho_j+\frac{\rho_j}{\delta}\lessapprox \frac{\epsilon}{\delta^{2\ell_t-1}}.
\end{align*}
Now, let $s\neq j, \ s\in \mathcal{C}_t$. Then,
\begin{align*}
    &\left|\left[\tilde{V}^{-1}V\right]_{j,s} \right|  = \prod_{b\in [n]\setminus \left\{j\right\}}\left|\frac{x_s-\tilde{x}_b}{\tilde{x}_j-\tilde{x}_b} \right| \lessapprox \frac{1}{\delta^{\ell_t-1}} \left|x_s-\tilde{x}_s \right| \prod_{b\in \mathcal{C}_t\setminus \left\{s,j\right\}}\left|x_s-\tilde{x}_b \right|\\
    &\lessapprox \frac{1}{\delta^{\ell_t-1}} \rho_s \left(\prod_{b\in \mathcal{C}\setminus \left\{s,j \right\}}\left(\tau \delta +\rho_b\right) \right)\lessapprox \frac{\rho_s}{\delta}\lessapprox \frac{\epsilon}{\delta^{2\ell_t-1}} .
    % \lessapprox \frac{\rho_j}{\delta} \lessapprox \frac{\epsilon}{\delta^{2\ell_t-1}}.
\end{align*} 
{\color{blue} In the last steps we used the fact that given any $b\in \mathcal{C}_t\setminus \left\{s,j \right\}$ we have that $\rho_b = \bar{\rho}_b\delta^{2(\ell_*-\ell_t)+1}$, which allows to extract $\delta$ from each of the terms in the product and that $s\in \mathcal{C}_t$, whence $\rho_s\lessapprox \frac{\epsilon}{\delta^{2\ell_t-2}}$.}

Finally, let $s\in \mathcal{C}_a, \ a\neq t$. By similar arguments, we obtain
\begin{align*}
&\left|\left[\tilde{V}^{-1}V\right]_{j,s} \right|  = \prod_{b\in [n]\setminus \left\{j\right\}}\left|\frac{x_s-\tilde{x}_b}{\tilde{x}_j-\tilde{x}_b} \right|  \lessapprox \frac{1}{\delta^{\ell_t-1}} \left|x_s-\tilde{x}_s \right| \prod_{b\in \mathcal{C}_a\setminus \left\{s\right\}}\left|x_s-\tilde{x}_b \right|\\
&\lessapprox \frac{1}{\delta^{\ell_t-1}} \rho_s \left(\prod_{b\in \mathcal{C}_a \setminus \left\{ s\right\}}  \left(\tau \delta +\rho_b \right) \right)\lessapprox \frac{1}{\delta^{\ell_t-\ell_a}}\rho_s\lessapprox \frac{\epsilon}{\delta^{\ell_a+\ell_t-2}}.
\end{align*}

\begin{remark}\label{Rem:SingClus}
For simplicity, the analysis above assumed $\ell_s\geq2$ for all $s\in [\zeta]$. For the case of an isolated node $\mathcal{C}_1 = \left\{ x_1\right\},\ \ell_1 = 1$ (without loss of generality), the analysis is modified slightly as follows: First, \eqref{eq:seqj} is written as 
\begin{align*}
\left[\tilde{V}^{-1}V\right]_{1,1}  &= \prod_{k\geq 2}\left[1+\frac{x_1-\tilde{x}_1}{\tilde{x}_1-\tilde{x}_b} \right] = 1+\sum_{\nu=1}^{n-1}\left[x_1-\tilde{x}_1 \right]^{\nu} \lambda_{\nu}^{(2),1}
\end{align*}
with $\lambda_{\nu}^{(2),1}$ given above. Hence,
\begin{align*}
\left|\left[\tilde{V}^{-1}V\right]_{1,1}-1 \right|\lessapprox \rho_1\lessapprox \epsilon.
\end{align*}
Second, for $s\neq 1, \ s\in \mathcal{C}_a$, we further have \begin{align*}
\left|\left[\tilde{V}^{-1}V\right]_{1,s} \right| &=  \prod_{k\geq 2}\left|\frac{x_s-\tilde{x}_k}{\tilde{x}_1-\tilde{x}_k} \right|\lessapprox \left|x_s-\tilde{x}_s \right| \prod_{k\in \mathcal{C}_a\setminus \left\{s\right\}}\left|x_s-\tilde{x}_k \right|\\
&\lessapprox \rho_s  \left(\rho_s+\tau \delta \right)^{\ell_a-1}\lessapprox\delta^{\ell_a-1}\rho_s\lessapprox \frac{\epsilon}{\delta^{\ell_a-1}}.
\end{align*}
\end{remark}

% \vspace{5cm}

\end{document}